 \newtheorem{thm}{Theorem}[section]
 \newtheorem{cor}[thm]{Corollary}
 \newtheorem{lem}[thm]{Lemma}
 \newtheorem{prop}[thm]{Proposition}
 \theoremstyle{definition}
 \newtheorem{defn}[thm]{Definition}
 \theoremstyle{remark}
 \newtheorem{rem}[thm]{Remark}
 \theoremstyle{remark}
 \newtheorem{example}[thm]{Example}
 \theoremstyle{definition}
 \newtheorem{notn}[thm]{Notation}
 \numberwithin{equation}{section}
 \newcommand{\an}{\mathrm{an}}
 \newcommand{\Ver}{\mathrm{Ver}}
 \newcommand{\Spec}{\mathrm{Spec}}
 \newcommand{\Frob}{\mathrm{Frob}}
 \newcommand{\Spf}{\mathrm{Spf}}
 \newcommand{\Aut}{\mathrm{Aut}}
 \newcommand{\Pic}{\mathrm{Pic}}
 \newcommand{\ord}{\mathrm{ord}}
 \newcommand{\Gr}{\mathrm{Gr}}
 \newcommand{\GL}{\mathrm{GL}}
 \newcommand{\PGL}{\mathrm{PGL}}
 \newcommand{\SL}{\mathrm{SL}}
 \newcommand{\un}{\mathrm{ur}}
 \newcommand{\loc}{\mathrm{loc}}
 \newcommand{\tor}{\mathrm{tor}}
 \newcommand{\Stab}{\mathrm{Stab}}
 \newcommand{\Ed}{\mathrm{Ed}}
 \newcommand{\Id}{\mathrm{Id}}
 \newcommand{\Tr}{\mathrm{Tr}}
 \newcommand{\Fr}{\mathrm{Fr}}
 \newcommand{\inv}{\mathrm{inv}}
 \renewcommand{\mod}{\mathrm{mod}}
 \newcommand{\Nr}{\mathrm{Nr}}
 \newcommand{\Odd}{\mathrm{Odd}}
 \newcommand{\fp}{\mathfrak p}
 \newcommand{\fr}{\mathfrak r}
 \newcommand{\fD}{\mathfrak D}
 \newcommand{\cO}{\mathcal{O}}
 \newcommand{\cZ}{\mathcal{Z}}
 \newcommand{\cC}{\mathcal{C}}
 \newcommand{\cA}{\mathcal{A}}
 \newcommand{\cG}{\mathcal{G}}
 \renewcommand{\cD}{\mathcal{D}}
 \newcommand{\cE}{\mathcal{E}}
 \newcommand{\cI}{\mathcal{I}}
 \renewcommand{\cH}{\mathcal{H}}
 \newcommand{\cT}{\mathcal{T}}
 \newcommand{\R}{\mathbb{R}}
 \newcommand{\C}{\mathbb{C}}
 \newcommand{\E}{\mathbb{E}}
 \newcommand{\F}{\mathbb{F}}
 \newcommand{\M}{\mathbb{M}}
 \newcommand{\Q}{\mathbb{Q}}
 \newcommand{\Z}{\mathbb{Z}}
 \newcommand{\A}{\mathbb{A}}
 \newcommand{\N}{\mathbb{N}}
 \renewcommand{\P}{\mathbb{P}}
 \newcommand{\Ell}{\mathcal{E}\ell\ell}
 \newcommand{\eps}{\varepsilon}
 \newcommand{\To}{\longrightarrow}
 \newcommand{\bs}{\setminus}
 \newcommand{\bD}{\bar{D}}
 \newcommand{\tF}{\widetilde{F}}
 \newcommand{\tP}{\widetilde{\Pi}}
 \newcommand{\Fi}{F_\infty}
 \newcommand{\G}{\Gamma}
 \newcommand{\La}{\Lambda}
 \newcommand{\la}{\lambda}
 \newcommand{\twist}[1]{{^\tau}\!#1}
\begin{document}

\title[Properties of modular curves of $\cD$-elliptic sheaves]
{Local diophantine properties of \\ modular curves of $\cD$-elliptic sheaves}

\author{Mihran Papikian}

\address{Department of Mathematics, Pennsylvania State University, University Park, PA 16802}

\email{papikian@math.psu.edu}

\thanks{The author was supported in part by NSF grant DMS-0801208 and Humboldt Research Fellowship.}

\subjclass{Primary 11F06, 11G18; Secondary 20E08}


\begin{abstract}
We study the existence of rational points on modular curves of
$\cD$-elliptic sheaves over local fields and the structure of
special fibres of these curves. We discuss some applications which
include finding presentations for arithmetic groups arising from
quaternion algebras, finding the equations of modular curves of
$\cD$-elliptic sheaves, and constructing curves violating the Hasse
principle.
\end{abstract}


\maketitle


\section{Introduction}\label{SecIntr}

The purpose of this paper is to discuss the function field analogues
of certain problems about Shimura curves over local fields. To
describe our results we first need to introduce some notation.

Let $C:=\P^1_{\F_q}$ be the projective line over the finite field
$\F_q$. Denote by $F=\F_q(T)$ the field of rational functions on
$C$. The set of closed points on $C$ (equivalently, places of $F$)
is denoted by $|C|$. For each $v\in |C|$, we denote by $\cO_v$ and
$F_v$ the completions of $\cO_{C,v}$ and $F$ at $v$, respectively.
Let $A:=\F_q[T]$. This is the subring of $F$ consisting of functions
which are regular away from the place generated by $1/T$ in
$\F_q[1/T]$. The place generated by $1/T$ will be denoted by
$\infty$ and called the \textit{place at infinity}; it will play a
role similar to the archimedean place for $\Q$. The places in
$|C|-\infty$ are the \textit{finite places}.

Let $D$ be a quaternion division algebra over $F$, which is split at
$\infty$, i.e., $D\otimes_F\Fi\cong \M_2(\Fi)$. Let $R$ be the
finite set of places where $D$ ramifies; see $\S$\ref{Sec3} for the
terminology and basic properties. Denote by $D^\times$ the
multiplicative group of $D$. Fix a maximal $A$-order $\La$ in $D$.
Since $D$ is split at $\infty$, it satisfies the so-called
\textit{Eichler condition} relative to $A$, which implies that, up
to conjugation, $\La$ is the unique maximal $A$-order in $D$, cf.
\cite[Cor. III. 5.7]{Vigneras}. Let
$$
\G^R:=\La^\times=\{\la\in \La\ |\ \Nr(\la)\in \F_q^\times\}
$$
be the group of units of $\La$. Via an isomorphism
$D^\times(\Fi)\cong \GL_2(\Fi)$, the group $\G^R$ can be considered
as a discrete subgroup of $\GL_2(\Fi)$. There are two analogues of
the Poincar\'e upper half-plane in this setting. One is the
\textit{Bruhat-Tits tree} $\cT$ of $\PGL_2(\Fi)$ and the other is
\textit{Drinfeld half-plane}
$\Omega:=\P^{1,\an}_{\Fi}-\P^{1,\an}_{\Fi}(\Fi)$, where
$\P^{1,\an}_{\Fi}$ is the rigid-analytic space associated to the
projective line over $\Fi$. These two versions of the upper
half-plane are related to each other: $\cT$ is the dual graph of an
analytic reduction of $\Omega$, cf. \cite{vdPut}. As a subgroup of
$\GL_2(\Fi)$, $\G^R$ acts naturally on both $\cT$ and $\Omega$
(these actions are compatible with respect to the reduction map).
The quotient $\G^R\bs \Omega$ is a one-dimensional, connected,
smooth analytic space over $\Fi$. It is the rigid-analytic space
associated to a smooth, projective curve $X^R$ over $\Fi$ \cite[Thm.
3.3]{vdPut}. In fact, $X^R$ is a moduli scheme of certain objects,
called \textit{$\cD$-elliptic sheaves}, so it has a model over $F$;
cf. $\S$\ref{ssDES}. The curve $X^R$ is the function field analogue
of a Shimura curve parametrizing abelian surfaces with
multiplication by a maximal order in an indefinite division
quaternion algebra over $\Q$.

\vspace{0.1in}

Now we describe the main results of the paper. Let $K$ be a finite
extension of $F_v$. We determine whether $X^R$ has $K$-rational
points. This problem naturally breaks into three cases, which need
to be examined separately: $v\in |C|-R-\infty$, $v\in R$ and
$v=\infty$. The corresponding theorems are Theorem \ref{thm2.1},
\ref{thm3.1} and \ref{thmLPinf}. These results are the function
field analogues of the results of Jordan and Livn\'e \cite{JL} and
Shimura \cite{Shimura}.

Next, we study the quotient graphs $\G^R\bs \cT$, which are the
analogues of fundamental domains of Shimura curves in the Poincar\'e
upper half-plane. The study of such domains over $\C$ is a classical
problem which can be traced back to the 19th century. Nevertheless,
determining explicitly such a domain for a given arithmetic group is
a computationally difficult problem; in fact, a large portion of the
recent book \cite{AB} is devoted to this problem (see also
\cite{KV}). We give a description of $\G^R\bs \cT$ in Corollary
\ref{cor5.4} and Theorem \ref{PropTV}.

Finally, we discuss some applications of our results. We give an
upper-bound on the number of generators of $\G^R$, determine the
cases when $\G^R$ is generated by torsion elements and find a
presentation for $\G^R$ in those cases (see Theorems \ref{thmGTI}
and \ref{thm-tree}). In $\S$\ref{SecExplU}, we find explicitly the
torsion units which generate $\G^R$ in terms of a basis of $D$. We
also determine in some special cases the equation defining $X^R$ as
a curve over $F$ (Theorem \ref{thmEquations}); as far as I am aware,
these are the first known examples of such equations. In
$\S$\ref{SHasse}, we use the knowledge of local points on $X^R$ to
show that there exist quadratic extensions of $F$ over which $X^R$
violates the so-called Hasse principle (Theorem \ref{thmHP}).


\section{Preliminaries} The purpose of this section is to fix the
notation and terminology, and recall some basic facts which will be
used later in the paper.

\subsection{Notation} Besides the notation in the introduction, the
following will be used throughout the paper. Let $v\in |C|$. The
residue field of $\cO_v$ is denoted by $\F_v$, the cardinality of
$\F_v$ is denoted by $q_v$, the degree $m$ extension of $\F_v$ is
denoted by $\F_v^{(m)}$, and $\deg(v):=\dim_{\F_q} (\F_v)$. We
assume that the valuation $\ord_v:F_v\to\Z$ is normalized by
$\ord_v(\varpi_v)=1$, where $\varpi_v$ is a uniformizer of $\cO_v$.
Denote the adele ring of $F$ by $\A=\prod'_{v\in |C|} F_v$, and for
a finite set $S\subset |C|$, let $\A^S:=\prod'_{v\in |C|-S}F_v$ be
the adele ring outside of $S$. For each $v\in |C|-\infty$, let
$\fp_v\lhd A$ be the corresponding prime ideal of $A$ and $\wp_v\in
A$ be the monic generator of $\fp_v$. For $a\in A$, let $\deg(a)$ be
the degree of $a$ as a polynomial in $T$. Note that
$\deg(\wp_v)=\deg(v)$.

For a ring $H$ with a unit element, we denote by $H^\times$ the
group of all invertible elements of $H$.

For $S\subset |C|$, put
$$
\Odd(S)=\left\{
         \begin{array}{ll}
           1, & \hbox{if all places in $S$ have odd degrees;} \\
           0, & \hbox{otherwise.}
         \end{array}
       \right.
$$


\subsection{Quaternion algebras}\label{Sec3} Let $D$ be a \textit{quaternion
algebra} over $F$, i.e., a $4$-dimensional $F$-algebra with center
$F$ which does not possess non-trivial two-sided ideals. If $L$ is a
field containing $F$, then $D\otimes_F L$ is a quaternion algebra
over $L$, cf. \cite[p. 4]{Vigneras}. By Wedderburn's theorem
\cite[(7.4)]{Reiner}, $D\otimes_F L$ is either a division algebra or
is isomorphic to the matrix algebra $\M_2(L)$. In the second case,
we say that $L$ \textit{splits} $D$, or is a \textit{splitting
field} for $D$, cf. \cite[p. 96]{Reiner}. We say that $D$
\textit{splits} (resp. \textit{ramifies}) at $v\in |C|$ if $F_v$ is
a splitting field for $D$ (resp. is not a splitting field). Let
$R\subset |C|$ be the set of places where $D$ ramifies. It is known
that $R$ is a finite set and has even cardinality, and conversely,
for any choice of a finite set $R\subset |C|$ of even cardinality
there is a unique, up to isomorphism, quaternion algebra ramified
exactly at the places in $R$; see \cite[p. 74]{Vigneras}. In
particular, $D\cong \M_2(F)$ if and only if $R=\emptyset$. The
\textit{discriminant} of $D$ is
$$
\fr:=\prod_{\substack{x\in R \\ x\neq \infty}}\wp_x\in A.
$$

Let $D^\times$ be the algebraic group over $F$ defined by
$D^\times(B)=(D\otimes_F B)^\times$ for any $F$-algebra $B$; this is
the multiplicative group of $D$.

\begin{notn} For $a, b\in F^\times$, let $H(a,b)$ be the $F$-algebra with basis $1,i,j,
ij$ (as an $F$-vector space), where $i,j\in H(a,b)$ satisfy:
\begin{itemize}
\item If $q$ is odd,
$$
i^2=a,\quad j^2=b,\quad ij=-ji.
$$
\item If $q$ is even,
$$
i^2+i=a,\quad j^2=b,\quad ij=j(i+1).
$$
\end{itemize}
It is easy to show that $H(a,b)$ is a quaternion algebra; cf.
\cite[pp. 1-5]{Vigneras}.
\end{notn}

\begin{prop}\label{prop-emb}
Let $L$ be a field of degree $n$ over $F$. Then $L$ embeds into $D$,
i.e., there is an $F$-isomorphism of $L$ onto an $F$-subalgebra of
$D$, if and only if $n$ divides $2$ and no place in $R$ splits in
$L$. Any two such $F$-isomorphisms are conjugate in $D$. If $L$ is a
quadratic extension of $F$, then $L$ splits $D$ if and only if $L$
embeds into $D$.
\end{prop}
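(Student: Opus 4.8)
The plan is to split the proposition into four statements and then assemble them: (a) any field intermediate between $F$ and $D$ has degree $1$ or $2$ over $F$; (b) for $L/F$ quadratic, $L$ embeds into $D$ if and only if $L$ splits $D$; (c) for $L/F$ quadratic, $L$ splits $D$ if and only if no place of $R$ splits in $L$; and (d) any two $F$-embeddings of $L$ into $D$ are conjugate by a unit of $D$. Given these, the first assertion of the proposition is (a) combined with (b) and (c) — the case $[L:F]=1$ being trivial, $F$ always embedding and no place of $F$ ``splitting'' in the trivial extension — and the last assertion is exactly (b). Statement (a) is immediate: each $x\in D$ is a root of its reduced characteristic polynomial, which is monic of degree $2$ over $F$, so $F[x]$ has $F$-dimension at most $2$; taking $x\in L\setminus F$ when $L\ne F$ forces $[L:F]=2$. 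Statement (d) is the Skolem--Noether theorem applied to the central simple $F$-algebra $D$ and the simple $F$-algebra $L$: two $F$-algebra homomorphisms $L\to D$ differ by an inner automorphism of $D$; see \cite[Ch.~I]{Vigneras}.

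For (b) I would argue as follows. If $L\hookrightarrow D$ then $[L:F]=2=\deg D$, so $L$ equals its own centralizer in $D$ (double centralizer theorem) and is a maximal subfield; a maximal subfield of a central simple algebra splits it, so $D\otimes_F L\cong\M_2(L)$. Conversely, if $D\cong\M_2(F)$ the companion matrix of the minimal polynomial of a generator of $L/F$ gives an embedding, so there is nothing to prove. If $D$ is a division algebra, then $[D]$ is an element of order $2$ in $\mathrm{Br}(F)$ killed by $L$; one produces a quaternion $F$-algebra $B$ with $L\hookrightarrow B$ and $[B]=[D]$ (for $L/F$ separable one may take the cyclic algebra $(L/F,b)$ for a suitable $b$, as indicated below), and since Brauer-equivalent quaternion algebras over a field are isomorphic, $B\cong D$ and hence $L\hookrightarrow D$.

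For (c), $D\otimes_F L$ is a quaternion algebra over the global field $L$, so by the Albert--Brauer--Hasse--Noether theorem it is split if and only if $D\otimes_F L_w$ is split for every place $w$ of $L$. If $w$ lies over $v\notin R$ then $D\otimes_F F_v\cong\M_2(F_v)$ is already split; if $w$ lies over $v\in R$ then $D\otimes_F F_v$ is the quaternion division algebra over $F_v$, and the formula $\inv_{L_w}(D\otimes_F L_w)=[L_w:F_v]\cdot\inv_{F_v}(D\otimes_F F_v)$ for local invariants shows that $D\otimes_F L_w$ is split if and only if $[L_w:F_v]$ is even, i.e. (since $[L_w:F_v]\le 2$) if and only if $v$ does not split in $L$. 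Thus $L$ splits $D$ precisely when every place of $R$ is nonsplit in $L$. The step I expect to be the main obstacle is the division-algebra half of the converse of (b): producing an embedding of $L$ into $D$ from the splitting condition. Concretely, when $L/F$ is separable one realizes $D$ as a cyclic algebra $(L/F,b)=L\oplus Lj$ with $j^2=b\in F^\times$ and $jx=\sigma(x)j$ for $x\in L$, $\sigma$ generating $\Gal(L/F)$; this automatically contains $L$, and the remaining task is to choose $b$ so that $(L/F,b)$ ramifies at exactly the places of $R$. The existence of such a $b$ rests on class field theory for $F$ — the local-invariant description of $\mathrm{Br}(F)$ and the product formula for Hilbert symbols — and it is precisely here that the hypotheses ``$R$ has even cardinality'' and ``no place of $R$ splits in $L$'' are used. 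All of this, including the characteristic-$2$ and inseparable cases, is carried out for global fields in \cite[Ch.~III]{Vigneras} and goes through verbatim with $F=\F_q(T)$.
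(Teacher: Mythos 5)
Your proposal is correct, and it is precisely the standard argument (reduced characteristic polynomial for the degree bound, double centralizer/maximal subfield theory plus the cyclic-algebra construction for ``embeds iff splits'', Albert--Brauer--Hasse--Noether with the local invariant formula $\inv_{L_w}=[L_w:F_v]\cdot\inv_{F_v}$ for the local splitting criterion, and Skolem--Noether for conjugacy) that the paper does not spell out but simply outsources to \cite[(32.15)]{Reiner} and \cite[Thm.~I.2.8]{Vigneras}. So your write-up matches the paper's intended proof; the one step you flag as delicate (realizing $D$ as a cyclic algebra containing $L$ when $D$ is division, including the inseparable characteristic-$2$ case) is exactly what those references supply.
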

\begin{proof}
See \cite[(32.15)]{Reiner} and \cite[Thm. I.2.8]{Vigneras}.
\end{proof}

We denote by $\alpha\mapsto \alpha'$ the canonical involution of $D$
\cite[p. 1]{Vigneras}; thus $\alpha''=\alpha$ and
$(\alpha\beta)'=\beta'\alpha'$. The \textit{reduced trace} of
$\alpha$ is $\Tr(\alpha)=\alpha+\alpha'$; the \textit{reduced norm}
of $\alpha$ is $\Nr(\alpha)=\alpha\alpha'$; the \textit{reduced
characteristic polynomial} of $\alpha$ is
$$
f(x)=(x-\alpha)(x-\alpha')=x^2-\Tr(\alpha)x+\Nr(\alpha).
$$
If $D$ is a division algebra and $\alpha\not\in F$, then the field
$F(\alpha)$ generated by $\alpha$ over $F$ is quadratic over $F$,
and the reduced trace and norm of $\alpha$ are simply the images of
$\alpha$ under the trace and norm of $F(\alpha)/F$.


\subsection{$\cD$-elliptic sheaves}\label{ssDES} From now on we assume that $D$ is a division
algebra and $D$ is split at $\infty$ (this is the analogue of
indefinite quaternion algebra over $\Q$). Fix a locally free sheaf
$\cD$ of $\cO_C$-algebras with stalk at the generic point equal to
$D$ and such that $\cD_v:=\cD\otimes_{\cO_C}\cO_v$ is a maximal
order in $D_v:=D\otimes_F F_v$. For $S\subset |C|$, denote
$\cD^S:=\prod_{v\in |C|-S}\cD_v$.

Let $W$ be an $\F_q$-scheme. Denote by $\Frob_W$ its Frobenius
endomorphism, which is the identity on the points and the $q$-th
power map on the functions. Denote by $C\times W$ the fibred product
$C\times_{\Spec(\F_q)}W$.

Let $z:W\to C$ be a morphism of $\F_q$-schemes. A
\textit{$\cD$-elliptic sheaf over $W$}, with pole $\infty$ and zero
$z$, is a sequence $\E=(\cE_i,j_i,t_i)_{i\in \Z}$, where each
$\cE_i$ is a locally free sheaf of $\cO_{C\times W}$-modules of rank
$4$ equipped with a right action of $\cD$ compatible with the
$\cO_C$-action, and where
\begin{align*}
j_i &:\cE_i\to \cE_{i+1}\\
t_i &:\twist{\cE}_{i}:=(\Id_C\times \Frob_{W})^\ast \cE_i\to
\cE_{i+1}
\end{align*}
are injective $\cO_{C\times W}$-linear homomorphisms compatible with
the $\cD$-action. The maps $j_i$ and $t_i$ are sheaf modifications
at $\infty$ and $z$, respectively, which satisfy certain conditions,
and it is assumed that for each closed point $w$ of $W$, the
Euler-Poincar\'e characteristic $\chi(\cE_0|_{C\times w})$ is in the
interval $[0,d)$; we refer to \cite[$\S$2]{LRS} and
\cite[$\S$1]{Hausberger} for the precise definition. Moreover, to
obtain moduli schemes with good properties, at the closed points $w$
of $W$ such that $z(w)\in R$ one imposes an extra condition on $\E$
to be ``special'', which is essentially a requirement that the trace
of the induced action of $g\in \cD_o$, $o\in R$, on the Lie algebra
of $\E$ at $o$ is equal to the reduced trace of $g$ as an element of
the quaternion algebra $D_o$ (see \cite[p. 1305]{Hausberger}). Note
that, unlike the original definition in \cite{LRS}, $\infty$ is
allowed to be in the image of $W$; here we refer to
\cite[$\S$4.4]{BS} for the details. Denote by $\Ell^\cD(W)$ the set
of isomorphism classes of $\cD$-elliptic sheaves over $W$.

\begin{thm}\label{thmMC}
The functor $W\mapsto \Ell^{\cD}(W)$ has a coarse moduli scheme
$X^\cD$, which is projective of pure relative dimension $1$ over $C$
and is smooth over $C-R-\infty$.
\end{thm}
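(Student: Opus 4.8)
The plan is to follow the now-standard route of constructing $X^{\cD}$ as a coarse moduli space via geometric invariant theory applied to a rigidified moduli problem, exactly as in the theory of Drinfeld modules and $\cD$-elliptic sheaves in \cite{LRS} and \cite{Hausberger}. First I would introduce level structures: fix a finite closed subscheme $I\subset C$ disjoint from $\infty$ and from $R$ (so that $\cD|_I\cong \M_2(\cO_I)$), and consider the functor $\Ell^{\cD}_I$ of $\cD$-elliptic sheaves $\E$ over $W$ equipped with an $I$-level structure, i.e.\ a trivialization of the pullback of $\cE_0$ (or equivalently of the associated ``$I$-divisible module'') compatible with the $\cD$-action. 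The key point, proved in \cite[\S4]{LRS} for $\infty\notin\mathrm{im}(W)$ and extended in \cite[\S4.4]{BS} to allow $\infty$, is that for $I$ sufficiently large (containing at least two distinct closed points, or one point of degree $\geq 2$, to kill automorphisms) the rigidified functor $\Ell^{\cD}_I$ is representable by a scheme $X^{\cD}_I$ which is \emph{quasi-projective} over $C$; moreover the finite group $G_I:=(\cD/I\cdot\cD)^{\times}/\F_q^{\times}\cong \GL_2(\cO_I)/\text{(center)}$ acts on $X^{\cD}_I$ by changing the level structure, and the fppf quotient $X^{\cD}_I/G_I$ is independent of $I$ and represents the coarsification of $\Ell^{\cD}$. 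One then \emph{defines} $X^{\cD}:=X^{\cD}_I/G_I$ and checks the universal property of a coarse moduli scheme in the usual way (geometric points, and compatibility with base change on fields), using that away from $R\cup\{\infty\}$ the objects have no extra automorphisms beyond $\F_q^{\times}$ while over $R$ the ``special'' condition is imposed precisely to control the automorphisms and keep the quotient well-behaved.

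The remaining assertions are then local questions on $X^{\cD}_I$ that descend to $X^{\cD}$. \textbf{Properness:} I would prove the valuative criterion for $X^{\cD}_I\to C$ using the semistable-reduction argument for $\cD$-elliptic sheaves: given a $\cD$-elliptic sheaf over the generic point of a complete DVR (with section to $C$), one spreads it out and, after possibly replacing $\E_i$ by its saturation and using the finiteness of the chain condition $\chi(\cE_0)\in[0,d)$ together with the elementary divisor theorem for the modifications $j_i,t_i$, one extends it uniquely over the closed point; this is carried out in \cite[\S5]{LRS} and, at $\infty$, in \cite[\S4.4]{BS}. Properness then passes to the finite quotient $X^{\cD}\to C$, and projectivity follows since $X^{\cD}$ is also quasi-projective over $C$ (being a GIT quotient of a quasi-projective scheme). \textbf{Pure relative dimension $1$:} this is a deformation-theory computation — the tangent space to deformations of a $\cD$-elliptic sheaf with fixed zero $z$ is $1$-dimensional (one checks that the obstruction group vanishes and the first-order deformations are governed by a line bundle on the curve), done in \cite[\S3]{LRS}; equidimensionality over $C$ follows from flatness, which in turn follows from the moduli description together with the fibrewise dimension count.

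\textbf{Smoothness over $C-R-\infty$:} over a finite place $v\notin R$, $\cD_v\cong\M_2(\cO_v)$, so by Morita equivalence a $\cD$-elliptic sheaf with zero at $v$ is, locally at $v$, the same datum as an elliptic sheaf (rank $2$) in the sense of Drinfeld; the deformation theory of the latter is unobstructed at a point where the zero is disjoint from the pole, giving formal smoothness of $X^{\cD}_I$ over $C-R-\infty$, hence of $X^{\cD}$ there (smoothness is insensitive to the finite quotient by $G_I$ away from ramification, since on $C-R-\infty$ the $G_I$-action on $X^{\cD}_I$ is free modulo the universally trivial $\F_q^{\times}$). I expect the \emph{main obstacle} to be the bookkeeping at $\infty$: the original \cite{LRS} setup excludes $\infty$ from the image of $W$, and one must invoke the extension in \cite[\S4.4]{BS} — where a $\cD$-elliptic sheaf is allowed to have zero at $\infty$ — and verify that the representability, the GIT quotient construction, and the valuative criterion all go through uniformly, including the interaction between the pole condition at $\infty$ and the ``special'' condition at places of $R$. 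Everything else is a citation-driven assembly of \cite{LRS}, \cite{Hausberger}, and \cite{BS}.
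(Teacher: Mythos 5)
Your strategy is the same as the paper's: rigidify the moduli problem with a level-$I$ structure for $I$ disjoint from $R\cup\infty$, invoke the representability/projectivity/smoothness results of Laumon--Rapoport--Stuhler, Hausberger and Blum--Stuhler for the rigidified problem, and pass to the quotient by the level group to obtain the coarse moduli scheme. The paper's proof is precisely this citation-driven assembly; it simply quotes Hausberger's Thm.~6.4 and Blum--Stuhler \S4.4 for projectivity, pure relative dimension $1$ and smoothness of the level-$I$ scheme wholesale, rather than re-deriving properness via the valuative criterion, the dimension via deformation theory, and smoothness via Morita equivalence as you sketch. Your extra detail there is harmless but is not where the actual content of the paper's argument lies.

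There is one concrete point you gloss over which the paper handles explicitly, and which as stated is a gap in your construction. Imposing a level-$I$ structure forces one to restrict to test schemes $W$ with $z(W)\cap I=\emptyset$, so the rigidified functor is representable by a (projective, $1$-dimensional) scheme over $C-I$, not over $C$ as you assert. Consequently the quotient by the level group only produces $X^\cD$ over $C-I$, and an additional step is needed to extend it across $I$: choose a second closed subscheme $J$ disjoint from $I$ (and from $R\cup\infty$), run the same construction to obtain the coarse scheme over $C-J$, and glue the two along their canonical identification over $C-I-J$. Without this gluing you do not get a scheme over all of $C$, so the statement of the theorem is not fully proved by your outline.
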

\begin{proof} Let $I$ be a closed non-empty subscheme of $C$ such
that $I\cap (R\cup \infty)=\emptyset$. It is possible to add
level-$I$ structure to the moduli problem, assuming one considers
only those schemes $W$ for which $z(W)\cap I=\emptyset$. The
resulting moduli problem is representable by a projective
$1$-dimensional scheme over $C-I$, which is smooth over
$C-I-R-\infty$; see \cite[Thm. 6.4]{Hausberger} and
\cite[$\S$4.4]{BS}. (The proof of this result is bases on the
techniques in \cite{LRS}.) Dividing by the level structure, we
obtain $X^\cD$ over $C-I-R-\infty$. To extend this scheme over $I$,
repeat the same argument using some $J$ disjoint from $I$, and glue
the resulting scheme to $X^\cD$ over $C-J-I-R-\infty$.
\end{proof}


\subsection{Graphs}\label{SecG} We recall some of the terminology related to graphs, as presented in
\cite{SerreT} and \cite{Kurihara}. A graph $\cG$ consists of a set
of \textit{vertices} $X=\Ver(\cG)$ and a set of \textit{edges}
$Y=\Ed(\cG)$. Every edge $y$ has an \textit{inverse} edge
$\bar{y}\in Y$, an \textit{origin} $o(y)\in X$ and a
\textit{terminus} $t(y)\in X$. Moreover, $\bar{\bar{y}}=y$,
$\bar{y}\neq y$ and $o(y)=t(\bar{y})$. The vertices $o(y)$ and
$t(y)$ are the \textit{extremities} of $y$. Note that it is allowed
for distinct edges $y\neq z$ to have $o(y)=o(z)$ and $t(y)=t(z)$,
and it is also allowed to have $y\in Y$ with $o(y)=t(y)$, in which
case $y$ is called a \textit{loop}. We say that two vertices are
\textit{adjacent} if they are the extremities of some edge. We will
assume that for any $v\in X$ the number of edges $y\in Y$ with
$o(y)=v$ is finite; this number is the \textit{degree} of $v$. A
vertex $v\in X$ is called \textit{terminal} if it has degree $1$. A
graph can be represented by a diagram where a marked point
corresponds to a vertex and a line joining two marker points
corresponds to a set of edges of the form $\{y, \bar{y}\}$; see
\cite[$\S$I.2.1]{SerreT}. A \textit{path} (without backtracking) of
length $m$ between two vertices $v, w$ in $\cG$ is a collection of
edges $y_1,y_2,\dots, y_m\in \Ed(\cG)$ such that $y_i\neq
\bar{y}_{i+1}$, $t(y_i)=o(y_{i+1})$ for $1\leq i\leq m-1$, and
$v=o(y_1)$, $w=t(y_m)$. We assume that $\cG$ is connected, i.e., a
path between any two distinct vertices exists. A path of minimal
length is called a \textit{geodesic}; the distance $d(v, w)$ between
$v$ and $w$ is the length of a geodesic. A graph which has no
non-trivial paths from a vertex to itself is called a \textit{tree}.
Note that in a tree a path between any two vertices is unique. A
graph is \textit{finite} if it has finitely many vertices and edges.
A finite graph $\cG$ can be interpreted as a $1$-dimensional
CW-complex \cite[p. 22]{SerreT}. The \textit{first Betti number}
$h_1(\cG)$ of $\cG$ is the dimension of the homology group $\dim_\Q
H_1(\cG, \Q)$.

$\cG$ is a \textit{graph with lengths} if we are given a map
$$
\ell=\ell_\cG: \Ed(\cG)\to \N=\{1,2,3, \cdots\}
$$
such that $\ell(y)=\ell(\bar{y})$.  An \textit{automorphism} of
$\cG$ is a pair $\phi=(\phi_1, \phi_2)$ of bijections $\phi_1: X\to
X$ and $\phi_2: Y\to Y$ such that $\phi_1(o(y))=o(\phi_2(y))$,
$\overline{\phi_2(y)}=\phi_2(\bar{y})$, and
$\ell(y)=\ell(\phi_2(y))$.

Let $\G$ be a group acting on a graph $\cG$ (i.e., $\G$ acts via
automorphisms). We say that $v, w\in X$ are $\G$-\textit{equivalent}
if there is $\gamma \in \G$ such that $\gamma v=w$; similarly, $y,
z\in Y$ are $\G$-\textit{equivalent} if there is $\gamma\in \G$ such
that $\gamma y=z$. For $v\in X$, denote
$$
\G_v=\Stab_\G(v)=\{\gamma\in \G\ |\ \gamma v=v\}
$$
the stabilizer of $v$ in $\G$. Similarly, let $\G_{y}=\G_{\bar{y}}$
be the stabilizer of $y\in Y$. $\G$ acts with \textit{inversion} if
there is $\gamma\in \G$ and $y\in Y$ such that $\gamma y=\bar{y}$.
If $\G$ acts without inversion, then we have a natural quotient
graph $\G\bs\cG$ such that $\Ver(\G\bs \cG)=\G\bs \Ver(\cG)$ and
$\Ed(\G\bs \cG)=\G\bs \Ed(\cG)$.

\begin{defn}(cf. \cite[p. 70]{SerreT}) Let $\cO$ be a complete discrete valuation
ring with fraction field $K$, finite residue field $k\cong \F_q$ and
a uniformizer $\pi$. Let $V$ be a two-dimensional vector space over
$K$. A \textit{lattice} of $V$ is a free rank-$2$ $\cO$-submodule of
$V$ which generates the $K$-vector space $V$. Two lattices $\La$ and
$\La'$ are \textit{homothetic} if there is $x\in K^\times$ such that
$\La'=x\La$. We denote the homothety class of $\La$ by $[\La]$.

Let $\cT$ be the graph whose vertices $\Ver(\cT)=\{[\La]\}$ are the
homothety classes of lattices in $V$, and two vertices $[\La]$ and
$[\La']$ are adjacent if we can choose representatives $L\in [\La]$
and $L'\in [\La']$ such that $L'\subset L$ and $L/L'\cong k$. One
shows that $\cT$ is an infinite tree in which every vertex has
degree $(q+1)$. This is the \textit{Bruhat-Tits tree} of
$\PGL_2(K)$. The group $\GL_2(K)$, as the group of linear
automorphisms of $V$, naturally acts on $\cT$ and preserves the
distances between vertices.
\end{defn}

\begin{lem}\label{lem1.6}
Let $g\in \GL_2(K)$ and $v\in \Ver(\cT)$. Then
$$
d(v, gv)\equiv \ord_K(\det(g))\ (\mod\ 2).
$$
\end{lem}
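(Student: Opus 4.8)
The plan is to reduce the statement to two cases that generate $\GL_2(K)$ up to the relevant congruence, and to track how the distance $d(v,gv)$ and the valuation $\ord_K(\det g)$ change in each. First I would recall the combinatorial fact (see \cite[$\S$II.1]{SerreT}) that $\GL_2(K)$ acts transitively on $\Ver(\cT)$, so it suffices to fix one convenient base vertex $v_0=[\cO\oplus\cO]$ and prove the congruence for all $g$ with this particular $v$; indeed, replacing $v$ by $hv_0$ changes $g$ to $h^{-1}gh$ (up to translating the orbit), which alters neither side. Next I would observe that both sides of the asserted congruence define maps $\GL_2(K)\to\Z/2\Z$: the right-hand side $g\mapsto \ord_K(\det g)\bmod 2$ is a group homomorphism, and the left-hand side $g\mapsto d(v_0,gv_0)\bmod 2$ is also a homomorphism because $\cT$ is bipartite --- the parity of the distance between two vertices is additive along paths, so $d(v_0,gh v_0)\equiv d(v_0,gv_0)+d(gv_0,ghv_0)=d(v_0,gv_0)+d(v_0,hv_0)\pmod 2$ using that $g$ preserves distances. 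So the claim is an identity of two homomorphisms, and it is enough to check it on a generating set.

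For the generating set I would use that $\GL_2(K)$ is generated by $\SL_2(K)$ together with the diagonal matrices $\mathrm{diag}(1,u)$, $u\in K^\times$, and that $\SL_2(K)$ is generated by the elementary matrices $\begin{pmatrix}1&x\\0&1\end{pmatrix}$ and $\begin{pmatrix}1&0\\y&1\end{pmatrix}$. On an elementary matrix the determinant is $1$, so the right side is $0$; on the other hand an elementary unipotent matrix fixes the chain of lattices adjacent to $v_0$ appropriately --- more precisely, for $x\in\cO$ it fixes $v_0$ itself (distance $0$), and for general $x\in K$ one checks directly that $d(v_0, gv_0)$ is even (it equals $-2\min(0,\ord_K x)$, which can also be deduced from the explicit formula for distance in terms of elementary divisors). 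Either way the left side is $0$, matching. For $g=\mathrm{diag}(\pi^a,\pi^b)$ the lattice $gv_0=[\pi^a\cO\oplus\pi^b\cO]$ is at distance $|a-b|$ from $v_0$, while $\ord_K(\det g)=a+b$, and $|a-b|\equiv a+b\pmod 2$; for $g=\mathrm{diag}(1,u)$ with $u$ a unit we get distance $0$ and $\ord_K\det g=0$. Since every element of $\GL_2(K)$ is a product of such matrices, additivity of both sides finishes the argument.

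The main technical point --- and the step I would spend the most care on --- is the explicit computation of $d(v_0,gv_0)$ for a diagonalizable representative, i.e., the elementary-divisor description: if $L\subset L'$ are lattices with $L'/L\cong \cO/\pi^{a}\oplus\cO/\pi^{b}$ (with $a\ge b\ge 0$, after scaling so that $L\subset L'$ but $L\not\subset\pi L'$), then $d([L],[L'])=a-b$. This is standard \cite[$\S$II.1.1]{SerreT}, but one must be a little careful about the homothety normalization so that the two sides of the congruence are compared correctly; once that is in hand, the parity bookkeeping is immediate. An alternative, perhaps cleaner, route that avoids even citing generators: pick any $g$, use the elementary-divisor (Smith) normal form to write $g=k_1\,\mathrm{diag}(\pi^{a},\pi^{b})\,k_2$ with $k_1,k_2\in\GL_2(\cO)$; since $\GL_2(\cO)=\Stab_{\GL_2(K)}(v_0)$ fixes $v_0$, we get $d(v_0,gv_0)=d(v_0,\mathrm{diag}(\pi^{a},\pi^{b})v_0)=|a-b|$, while $\ord_K(\det g)=a+b$ because $\det k_i\in\cO^\times$, and $|a-b|\equiv a+b\pmod 2$ closes the proof in one line. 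I would present this second version as the main proof and relegate the homomorphism remark to a parenthetical, since it is the shortest and uses only results already available in the excerpt.
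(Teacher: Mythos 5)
Your proof is correct, and in particular the ``second version'' via the Cartan/Smith decomposition $g=k_1\,\mathrm{diag}(\pi^a,\pi^b)\,k_2$ with $k_1,k_2\in\GL_2(\cO)$ fixing the standard vertex, together with $d(v_0,\mathrm{diag}(\pi^a,\pi^b)v_0)=|a-b|\equiv a+b\pmod 2$, is exactly the standard argument. The paper itself gives no proof but simply cites the Corollary on p.~75 of Serre's \emph{Trees}, whose proof is the one you have reproduced, so your write-up matches the intended argument (and is self-contained where the paper is not).
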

\begin{proof}
See Corollary on p. 75 in \cite{SerreT}.
\end{proof}


\subsection{Admissible curves}\label{ssAC} Let $\cO$ be a complete discrete valuation
ring with fraction field $K$, finite residue field $k$ and a
uniformizer $\pi$. Let $S=\Spec(\cO)$, $\cO^\un$ be the maximal
unramified extension of $\cO$, $\widehat{\cO}^\un$ be its
completion, and $\bar{k}=\cO^\un/\pi\cO^\un$.

\begin{defn}\label{def-AC} (cf. \cite[$\S$3]{JL}) A curve $X\to S$ is called \textit{admissible} if
\begin{enumerate}
\item $X$ is proper and flat over $S$ and its generic fibre $X_K$ is a
smooth curve.
\item The special fibre $X_k$ is reduced with normal crossing
singularities, and every irreducible component is isomorphic to
$\P^1_k$.
\item If $x$ is a double point on the special fibre of $X$, then there
exists a unique integer $m_x$ for which the completion of
$\cO_{x,X}\otimes_\cO \widehat{\cO}^\un$ is isomorphic to the
completion of $\widehat{\cO}^\un[t,s]/(ts-\pi^{m_x})$.
\end{enumerate}
\end{defn}

The \textit{dual graph} $\cG=\Gr(X)$ of $X$ is the following graph
with lengths. The vertices of $\cG$ are the irreducible components
of $X_k$. The edges of $\cG$, ignoring the orientation, are the
singular points of $X_k$. If $x$ is a double point and $\{y,
\bar{y}\}$ is the corresponding edge of $\cG$, then the extremities
of $y$ and $\bar{y}$ are the irreducible components passing through
$x$; choosing between $y$ or $\bar{y}$ corresponds to choosing one
of the branches through $x$. Finally, $\ell(y)=\ell(\bar{y})=m_x$.

The admissible curves are closely related to Mumford curves. Let
$\G\subset \PGL_2(K)$ be a discrete subgroup with compact quotient.
There is an admissible curve $X_\G$ over $S$ uniquely determined by
$\G$. $X_\G$ is obtained as follows (see \cite[$\S$3]{Kurihara}).
Let $\G_1$ be a torsion-free normal subgroup of $\G$ with finite
index (such a subgroup always exists). Mumford constructed a formal
scheme $\widehat{\Omega}$ over $\Spf(\cO)$ such that the dual graph
of its closed fibre is isomorphic to the Bruhat-Tits tree $\cT$ of
$\PGL_2(K)$, and proved that there exists a unique admissible curve
$X_{\G_1}$ over $S$ whose completion along its special fibre is
isomorphic to the quotient $\G_1\bs \widehat{\Omega}$ in the
category of formal schemes. Now define $X_\G$ as the quotient
$X_{\G_1}/(\G/\G_1)$; this is independent of the choice of $\G_1$.
Next, $\G$ naturally acts on $\cT$. Assume $\G$ acts without
inversion. We assign lengths to the edges of the quotient graph
$\G\bs \cT$: for $y\in \Ed(\G\bs \cT)$ let
$\ell(y)=\#\G_{\tilde{y}}$, where $\tilde{y}$ is a preimage of $y$
in $\cT$.

\begin{thm}\label{thm-Kurihara} $\Gr(X_\G)\cong \G\bs
\cT$ as graphs with lengths. This implies that the genus of
$(X_\G)_K$ is equal to $h_1(\G\bs \cT)$.
\end{thm}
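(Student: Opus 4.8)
The plan is to deduce Theorem~\ref{thm-Kurihara} from the construction of $X_\G$ recalled just above, bootstrapping from the torsion-free case. First I would treat the case of a torsion-free discrete cocompact $\G_1\subset\PGL_2(K)$. Here Mumford's formal scheme $\widehat\Omega$ over $\Spf(\cO)$ has the property that the dual graph of its closed fibre is $\cT$, and $X_{\G_1}$ is the admissible curve whose formal completion along the special fibre is $\G_1\bs\widehat\Omega$. Since $\G_1$ acts freely on $\cT$ (torsion-freeness plus the fact that a nontrivial finite-order element of $\PGL_2(K)$ fixes a vertex, as its eigenvalues generate a finite extension and one scales to get a fixed lattice class), the quotient map $\cT\to\G_1\bs\cT$ is a covering of graphs, all edge lengths in $\G_1\bs\cT$ equal $1$, and forming the quotient commutes with passing to dual graphs: the irreducible components of $(X_{\G_1})_k$ are the $\G_1$-orbits of components of $\widehat\Omega_k$, i.e. the vertices of $\G_1\bs\cT$, and likewise for the double points. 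The normal-crossing and $m_x$ data in Definition~\ref{def-AC} match because $\G_1$ acts freely, so étale-locally nothing changes; hence $\Gr(X_{\G_1})\cong\G_1\bs\cT$ as graphs with lengths.

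Next I would descend to general $\G$. Choose a torsion-free normal $\G_1\lhd\G$ of finite index and set $H=\G/\G_1$, so $X_\G=X_{\G_1}/H$ and $\G\bs\cT=H\bs(\G_1\bs\cT)$. One must check two things. (i) On the curve side, quotienting the admissible curve $X_{\G_1}$ by the finite group $H$ produces an admissible curve, and its dual graph is $H\bs\Gr(X_{\G_1})$ \emph{with the corrected lengths}: if $x$ is a double point of $(X_{\G_1})_k$ with stabilizer of order $e$ in $H$ acting on the two branches, then in the quotient the local equation $ts=\pi^{m_x}$ becomes $t's'=\pi^{m_x/e}$ if the branches are preserved, and more care is needed if a reflection swaps the branches — but for $\G$ acting \emph{without inversion} on $\cT$ this branch-swap does not occur, which is exactly why the hypothesis is imposed. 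Thus $m_x$ in $X_\G$ equals $\#\G_{\tilde y}$ where $\tilde y\in\cT$ lies over the corresponding edge, because $\#\G_{\tilde y}=\#(\G_1)_{\tilde y}\cdot e=1\cdot e=e$ and $m_x^{\text{new}}=m_x^{\text{old}}/ (\text{ramification}) $ — I will need to run this bookkeeping carefully. (ii) On the graph side, $H\bs(\G_1\bs\cT)=\G\bs\cT$ as sets of vertices and edges is immediate, and the length function assigned in the statement, $\ell(y)=\#\G_{\tilde y}$, is precisely the one produced by the curve quotient in (i). Combining (i)+(ii) with the torsion-free case gives $\Gr(X_\G)\cong\G\bs\cT$ as graphs with lengths.

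Finally, the genus assertion: for an admissible curve $X$ over $S$, every component of $X_k$ is $\P^1_k$ and the singularities are nodes, so $X_k$ is a semistable curve of arithmetic genus equal to $h_1$ of its dual graph (each $\P^1$ contributes $0$ to the genus, and the $b_1$ loops in the dual graph each contribute $1$; formally, from the normalization sequence $0\to\cO_{X_k}\to\bigoplus\cO_{\P^1}\to\bigoplus_{\text{nodes}}k\to0$ one gets $\dim H^1(X_k,\cO_{X_k})=\#\text{nodes}-\#\text{components}+1=h_1(\Gr(X))$). Since $X_\G$ is proper and flat over $S$ with smooth generic fibre, flatness gives that the arithmetic genus of $(X_\G)_K$ equals that of $(X_\G)_k$, and smoothness of the generic fibre identifies arithmetic and geometric genus there; hence $\operatorname{genus}((X_\G)_K)=h_1(\Gr(X_\G))=h_1(\G\bs\cT)$.

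I expect the main obstacle to be step (i): controlling exactly how the local parameters $m_x$ transform under the finite quotient by $H$ at points where $H$ has nontrivial stabilizer, and verifying that the ``without inversion'' hypothesis rules out the pathological branch-swapping case so that the quotient remains admissible with the predicted edge lengths. The rest is formal, given Mumford's theorem and the structure of admissible curves recalled above; one may also simply cite \cite[$\S$3]{Kurihara}, where this computation is carried out in detail.
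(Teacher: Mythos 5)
The paper disposes of this statement with a one-line citation to Kurihara's Proposition 3.2 (plus the remark that ``without inversion'' makes $(\G\bs\cT)^\ast=\G\bs\cT$); your proposal reconstructs the argument behind that citation, and the overall strategy --- torsion-free case via Mumford's theorem, descent through the finite quotient $H=\G/\G_1$, normalization sequence for the genus --- is the right one. The genus paragraph is complete and correct as written.

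The genuine problem sits in step (i), exactly where you flag the crux: the transformation law you write for $m_x$ is backwards. If a node of $(X_{\G_1})_k$ has complete local ring $\widehat{\cO}^\un[[t,s]]/(ts-\pi^{m})$ and the stabilizer of order $e$ acts by $(t,s)\mapsto(\zeta t,\zeta^{-1}s)$ with $\zeta$ a primitive $e$-th root of unity (the two characters are forced to be mutually inverse because $\pi$, hence $ts$, is fixed, and each branch is preserved since there is no inversion), then the invariant ring is generated by $t'=t^{e}$ and $s'=s^{e}$ subject to $t's'=\pi^{me}$: lengths are \emph{multiplied} by $e$ under the quotient, not divided. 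Your ``$t's'=\pi^{m_x/e}$'' and ``$m_x^{\new}=m_x^{\old}/e$'' would produce the non-integer $1/e$ starting from $m^{\old}=1$, whereas the statement demands $\ell(y)=\#\G_{\tilde y}=e$; only the multiplicative law delivers this. Two further points need justification before that local computation applies: (a) that $\G_{\tilde y}$ --- equivalently its image $H_{\bar y}$ in $H$, which is an isomorphic copy because $\G_1$ is torsion-free --- acts on the branch coordinate $t$ through a \emph{faithful} character, so that $e$ really is the full order of the stabilizer and not a proper divisor of it; and (b) what happens at smooth points of $(X_{\G_1})_k$ with nontrivial stabilizer, where one must check the quotient is still regular with reduced special fibre (harmless when the stabilizers have order prime to $p$, as they do for the $\G$ occurring in this paper, but it deserves a sentence for a general discrete cocompact $\G$). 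With the multiplicative law and these two checks supplied, your descent argument does prove the theorem; alternatively one simply quotes Kurihara, as the paper does.
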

\begin{proof}
This is \cite[Prop. 3.2]{Kurihara} (note that since $\G$ acts
without inversion on $\cT$ the graph $(\G\bs\cT)^\ast$ in
\cite{Kurihara} is $(\G\bs\cT)$ itself).
\end{proof}

\begin{rem}
$\widehat{\Omega}$ is the formal scheme associated to Drinfeld's
non-archimedean half-plane $\Omega=\P^{1,\an}_K-\P^{1,\an}_K(K)$
over $K$. For the description of the rigid-analytic structure of
$\Omega$ and the construction of $\widehat{\Omega}$ we refer to
\cite[Ch. I]{BC}.
\end{rem}


\section{Places of good reduction}

\begin{thm}\label{thm2.1} Let $o\in |C|-R-\infty$. Let $K$ be a finite extension of
$F_o$ of relative degree $f=f(K/F_o)$ and ramification index
$e=e(K/F_o)$.
\begin{itemize}
\item If $f$ is even, then $X^\cD(K)\neq \emptyset$.
\item If $f$ is odd, then $X^\cD(K)=\emptyset$ if and only if for
every $\alpha$ satisfying a polynomial of the form
$$
X^2+aX+c\wp_o^f \quad \text{with }a\in A \text{ and } c\in
\F_q^\times,
$$
and such that $F(\alpha)$ is quadratic over $F$, either some place
in $(R\cup \infty)$ splits in $F(\alpha)$, or $\wp_o$ divides
$\alpha$ and $o$ splits in $F(\alpha)$.
\end{itemize}
\end{thm}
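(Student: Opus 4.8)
The strategy is to reduce the existence of a $K$-rational point on $X^\cD$ to the existence of an appropriate quadratic order embedding into $D$, using the fact that $X^\cD$ is a moduli scheme of $\cD$-elliptic sheaves together with the theory of admissible (Mumford) reduction at $o$. First I would observe that $o$ is a place of good reduction (Theorem \ref{thmMC}: $X^\cD$ is smooth over $C-R-\infty$), so by properness and smoothness over $\cO_o$, the curve $X^\cD$ acquires $K$-points exactly when its special fibre $X^\cD_{\F_o}$ has a smooth $\F_o^{(f)}$-point that lifts, i.e. (by Hensel) when $X^\cD_{\F_o}(\F_o^{(f)})\neq\emptyset$. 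Thus the problem becomes: count/detect $\F_o^{(f)}$-rational points on the special fibre at a good place. When $f$ is even this is automatic: a smooth projective geometrically connected curve over a finite field $\F_o^{(f)}$ with $q_o^f$ a square has many rational points by Weil's bound (for $q_o^f$ large; the small cases must also be handled, but genus-zero components or the Eichler/Weil estimate suffice), which gives the first bullet.

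For the odd case I would use the description of the $\overline{\F_o}$-points of the special fibre in terms of the \textit{isogeny classes} of $\cD$-elliptic sheaves with zero at $o$, in the spirit of Drinfeld's and Laumon–Rapoport–Stuhler's analysis: the Frobenius endomorphism $\Frob$ of such a sheaf over $\F_o^{(f)}$ generates a commutative subring, and since $D$ is a division algebra this forces $F(\Frob)$ to be a field. The zero of the sheaf being $o$ with Frobenius over $\F_o^{(f)}$ translates (via the local invariant computation at $o$, cf. \cite{LRS}, \cite{Hausberger}) into $\Frob$ satisfying a quadratic equation $X^2+aX+c\wp_o^f$ with $a\in A$, $c\in\F_q^\times$: the constant term is the norm, whose $o$-adic valuation equals $f$ because the sheaf has zero exactly $o$ of degree $f$ over the base, and the unit $c$ records that $\Nr(\Gamma^R)=\F_q^\times$. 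Conversely, given such an $\alpha$ with $F(\alpha)/F$ quadratic, an $\F_o^{(f)}$-point exists if and only if $F(\alpha)$ embeds $F$-algebraically into $D$ — which by Proposition \ref{prop-emb} happens precisely when no place of $R$ splits in $F(\alpha)$ — \emph{and} the relevant local condition at $\infty$ holds, namely $\infty$ does not split in $F(\alpha)$ (so that the local component of the corresponding sheaf at $\infty$ is the correct one; here one uses that $D$ is split at $\infty$ and the tree-theoretic description of the reduction, with Lemma \ref{lem1.6} controlling the parity/distance obstruction), and finally the condition separating "$\wp_o\nmid\alpha$" from "$\wp_o\mid\alpha$ and $o$ inert", which comes from requiring that $\Frob$ actually have zero equal to $o$ and not be a scalar-times-unit (if $\wp_o\mid\alpha$ and $o$ splits in $F(\alpha)$ the putative isogeny class does not in fact meet the fibre over $o$, or produces a supersingular-type point defined over a proper subfield). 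Assembling these: $X^\cD(K)=\emptyset$ iff no such $\alpha$ gives an admissible embedding, which is exactly the negation in the statement.

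Concretely I would carry it out in this order: (i) reduce to $\F_o^{(f)}$-points of the smooth special fibre via Hensel and properness; (ii) dispose of $f$ even by a point-count; (iii) set up the Honda–Tate-type dictionary between $\overline{\F_o}$-points of $X^\cD_{\F_o}$ and pairs (quadratic $F$-algebra $L$, embedding $L\hookrightarrow D$, Frobenius element $\pi\in L$) with prescribed local behaviour, citing \cite{LRS} and \cite{Hausberger} for the equivalence of categories between $\cD$-elliptic sheaves up to isogeny over a finite field and such linear-algebra data; (iv) translate "point defined over $\F_o^{(f)}$, not a smaller field" into "$\pi$ satisfies $X^2+aX+c\wp_o^f$ and $F(\pi)=L$ is a genuine quadratic field"; (v) apply Proposition \ref{prop-emb} for the embedding criterion at the finite ramified places; (vi) handle the place at $\infty$ and the place $o$ itself by the explicit local conditions, using Lemma \ref{lem1.6} for the $\infty$-parity and a direct local computation at $o$ for the "$\wp_o\mid\alpha$ and $o$ splits" exclusion.

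The main obstacle I expect is step (vi), and within it the place $o$: one must check carefully that when $\wp_o\mid\alpha$ and $o$ splits in $F(\alpha)$ there really is no $\cD$-elliptic sheaf over $\F_o^{(f)}$ in that isogeny class with zero equal to $o$ — equivalently, that the local lattice chain at $o$ forced by the splitting is incompatible with the zero being at $o$ and the Euler characteristic constraint. This is the function-field analogue of the delicate case analysis in \cite{JL}, and getting the "$c\in\F_q^\times$" normalisation and the degree-$f$ bookkeeping exactly right (so that the displayed polynomial has coefficients in $A$ rather than merely in $F$) is where the care is needed; the even-$f$ bullet and the embedding criterion via Proposition \ref{prop-emb} are comparatively routine.
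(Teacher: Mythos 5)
Your overall strategy --- Hensel reduction to the special fibre, then the Honda--Tate/isogeny-class dictionary of Laumon--Rapoport--Stuhler to translate the existence of $\F_o^{(f)}$-points into the existence of a quadratic element with prescribed norm and splitting behaviour --- is exactly the route the paper takes for the odd-$f$ case. But your step (ii) has a genuine gap. For $f$ even you propose to conclude by a Weil-bound point count, since $q_o^f$ is a square. This does not work: the special fibre at a good place is a smooth geometrically irreducible curve of the full genus $g(R)\approx q^{\deg\fr}$ (there are no ``genus-zero components''), and the Weil lower bound $q_o^f+1-2g(R)q_o^{f/2}$ is negative whenever $g(R)$ is large compared with $q_o^{f/2}$ (e.g.\ $\deg(o)=1$, $f=2$, $\deg\fr$ large). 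So the ``small cases'' you defer are in fact the generic ones, and no counting estimate closes them. The paper instead handles $f$ even inside the same isogeny-class framework: in the isogeny class with $\tF=F$, uniformized by the quaternion algebra $\bD$ ramified at $R\cup o\cup\infty$, an $\F_o^{(f)}$-rational point exists iff there is $\delta\in\bD^\times(F)\cap(\cD^{\infty,o})^\times$ whose reduced norm generates $\fp_o^f$, and for even $f$ the central element $\delta=\wp_o^{f/2}$ does the job uniformly. You should replace the point count by this observation.

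Two smaller points. First, your appeal to Lemma \ref{lem1.6} for the condition at $\infty$ is misplaced: that lemma concerns the Bruhat--Tits tree at $\infty$ and plays no role in this proof; the non-splitting of $\infty$ in $F(\alpha)$ is forced either by the definition of a $(D,\infty,o)$-type (which requires that $\infty$ not split in $\tF$) or, in the $\tF=F$ case, by the fact that $F(\delta)$ must embed into $\bD$, which is ramified at $\infty$. Second, to recover the precise statement you must keep the two kinds of isogeny classes separate: for $\tF=F$ the obstruction is that some place of $R\cup\infty\cup o$ splits in $F(\alpha)$, while for $\tF$ quadratic it is that some place of $R\cup\infty$ splits, or $o$ does not split, or $\wp_o\mid\alpha$; the condition in the theorem is the conjunction of these two, and your plan as written blurs that case distinction.
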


Before proving the theorem we make some comments about its
consequences.

\begin{rem}
Theorem \ref{thm2.1} implies that if $q$ is even then
$X^\cD(F_o)\neq \emptyset$, and hence also $X^\cD(K)\neq \emptyset$
for any extension $K$ of $F_o$. Indeed, if $q$ is even, then
$X^2+\wp_o=0$ is purely inseparable over $F$, so all the places in
$(R\cup \infty\cup o)$ ramify in $F(\sqrt{\wp_o})$.
\end{rem}

\begin{rem}
If $q$ is odd, then to determine whether $X^\cD(K)=\emptyset$ one
needs to examine only a finite number of quadratic extensions of
$F$. Indeed, $\infty$ splits in $F(\alpha)$, with $\alpha$
satisfying $X^2+aX+c\wp_o^f=0$, once $\deg(a)>f\cdot \deg(o)/2$.
\end{rem}

\begin{rem}
Assume $q$ is odd and fix a non-square $c\in \F_q^\times$. Using the
Chinese Remainder Theorem, it is easy to show that if
$\deg(o)>2\deg(\fr)$ then there exists $a\in A$ such that
$X^2+aX+c\wp_o$ is irreducible over $F_v$ for $v\in R\cup \infty$.
Theorem \ref{thm2.1} then implies that $X^\cD(F_o)\neq \emptyset$.
The same conclusion can be drawn also using the Weil bound. The
genus $g$ of $X^\cD$ is approximately $q^{\deg{\fr}}$ (see Corollary
\ref{cor5.4}), so $q_o+1-2g\sqrt{q_o}>0$ if $\deg(o)>2\deg(\fr)$.
Then the Weil bound implies $X^\cD(\F_o)\neq \emptyset$, so
$X^\cD(F_o)\neq \emptyset$ by Hensel's lemma (cf. the proof of
Theorem \ref{thm2.1}).
\end{rem}

\begin{rem}
If $q$ is odd, then for a fixed $K$ with odd relative degree there
always exist (infinitely many) $X^\cD$ with good reduction at $o$
such that $X^\cD(K)=\emptyset$. Indeed, let $S(a,c)\subset |C|$ be
the set of primes splitting in the quadratic extension generated by
the solutions of $X^2+aX+c\wp_o^f=0$. Each set $S(a,c)$ is infinite,
but there are only finitely many $a,c$ with $c\in \F_q^\times$ and
$\deg(a)\leq f\cdot \deg(o)/2$. Hence we can choose $R\subset
|C|-\infty-o$ of even cardinality which has non-empty intersection
with each $S(a,c)$ (and obviously there are infinitely many such
$R$). By Theorem \ref{thm2.1}, the corresponding $X^\cD$ will have
no $K$-rational points.
\end{rem}

\begin{example}
Let $q=3$ and $R=\{x,y\}$ with $\wp_x=T$ and $\wp_y=T-1$. Let
$\wp_o=T-2$. In the extension generated by the solutions of
$X^2+X+(T-2)=0$ the places $x$ and $\infty$ ramify, and $y$ remains
inert. On the other hand, $o$ splits but $\wp_o$ does not divide
$\alpha$. Hence $X^\cD(F_o)\neq \emptyset$.
\end{example}

\begin{proof}[Proof of Theorem \ref{thm2.1}] We start with a reduction. Denote
$M:=X^\cD\times_C\Spec(\F_o)$. By Theorem \ref{thmMC}, $X^\cD$ has
good reduction at $o$, so by the geometric version of Hensel's lemma
\cite[Lem. 1.1]{JL}, $X^\cD(K)\neq \emptyset$ if and only if
$M(\F_o^{(f)})\neq \emptyset$.

Let $k$ be a fixed algebraic closure of $\F_o$. In \cite[Ch.
9]{LRS}, the authors, following Drinfeld, develop a Honda-Tate type
theory for $\cD$-elliptic sheaves of characteristic $o$ over $k$. A
foundational block of this theory is a construction which attached
to each $\cD$-elliptic sheaf over $k$ a pair $(\tF, \tP)$, called
\textit{$(D,\infty,o)$-type}, having the following properties, cf.
\cite[(9.11)]{LRS}:
\begin{enumerate}
\item $\tF$ is a separable field extension of $F$ of degree dividing
$2$;
\item $\tP\in \tF^\times\otimes_\Z \Q$ is such that $\tP\not\in F^\times\otimes_\Z
\Q$ unless $\tF=F$;
\item $\infty$ does not split in $\tF$;
\item The valuations of $F$ naturally extend to the group $\tF^\times\otimes_\Z\Q$.
There are exactly two places $\tilde{x}=\tilde{\infty}$ and
$\tilde{x}=\tilde{o}$ of $\tF$ such that $\ord_{\tilde{x}}(\Pi)\neq
0$. The place $\tilde{\infty}$ is the unique place of $\tF$ over
$\infty$ and $\tilde{o}$ divides $o$. Moreover,
$$
\ord_{\tilde{\infty}}(\tP)\cdot \deg(\tilde{\infty})=-[\tF:F]/2.
$$
\item For each place $x$ of $F$ and each place $\tilde{x}$ of $\tF$
dividing $x$, we have
$$
(2[\tF_{\tilde{x}}:F_x]/[\tF:F])\cdot \inv_x(D)\in \Z.
$$
\end{enumerate}

Next, using (1)-(5), one deduces the following property of $\tF$
depending on the value of $h:=2[\tF_{\tilde{o}}:F_o]/[\tF:F]$ (see
\cite[Lem. 4.1]{PapGenus}). If $h=1$, then $\tF$ is a separable
quadratic extension of $F$ in which $o$ splits and each $x\in R\cup
\infty$ does not split. If $h=2$, then $\tF=F$ and the
$(D,\infty,o)$-type $(F, \Pi)$ is unique.

By \cite[Thm 9.13]{LRS}, the map which associates to a
$\cD$-elliptic sheaf of characteristic $o$ over $k$ its
$(D,\infty,o)$-type decomposes the set $M(k)$ into a disjoint union
of non-empty subsets (called, \textit{isogeny classes})
$$
M(k)=\bigsqcup M(k)_{(\tF,\tP)}.
$$
The union is over all $(D,\infty,o)$-types, the points in the subset
$M(k)_{(\tF,\tP)}$ have $(D,\infty,o)$-type $(\tF, \tP)$, and each
such subset is non-empty. Moreover, the geometric Frobenius
$\Frob_o: x\mapsto x^{-q_o}$ preserves $M(k)_{(\tF,\tP)}$.

We will examine when a given isogeny class contains an
$\F_o^{(f)}$-rational points. For this we will employ the
group-theoretic description of each isogeny class along with the
action of the geometric Frobenius given in \cite[$\S$10]{LRS}.

We start with the case $F=\tF$. Let $\bD$ be the quaternion algebra
over $F$ which is ramified exactly at the places $R\cup o\cup
\infty$. There is a bijection
$$
M(k)_{(F,\Pi)}\overset{\sim}{\To}\bD^\times(F)\bs\left[\bD^\times(\A^{\infty,o})/(\cD^{\infty,o})^\times\times
\Z\right],
$$
where $\bD^\times(F)$ is embedded diagonally into
$\bD^\times(\A^{\infty,o})$, and $(\cD^{\infty,o})^\times$ is
considered as a subgroup of $\bD(\A^{\infty, o})$ via an isomorphism
$\bD(\A^{\infty, o})\cong D(\A^{\infty, o})$. $\bD^\times(F)$ act on
$\bD^\times(\A^{\infty,o})/(\cD^{\infty,o})^\times$ by left
multiplication and on $\Z$ via the composition $\ord_o\circ \Nr$.
The action of $\Frob_o$ on $M(k)_{(F,\Pi)}$ corresponds to the
translation by $1$ on $\Z$.

Hence $M(\F_o^{(f)})_{(F,\Pi)}\neq \emptyset$ if and only if there
is an element $\delta\in \bD^\times(F)\cap (\cD^{\infty,o})^\times$
whose reduced norm generates the ideal $\fp_o^f$, cf. \cite[p.
278]{LRS}. This is equivalent to the existence of $\delta\in
\bD^\times(F)$ such that $\Tr(\delta)=a\in A$, $\Nr(\delta)=b\in A$
and $(b)=\fp_o^f$.

If $f$ is even, such $\delta$ always exists, since $\wp_o^{f/2}\in
F^\times$ satisfies the required conditions.

Now suppose $f$ is odd, and $\delta$ with the required properties
exists. If $\delta\in F$, then $\Nr(\delta)$ is a square in $F$
which contradicts $f$ being odd. Hence the reduced characteristic
polynomial of $\delta$ is of the form $P_\delta(X)=X^2+aX+c\wp_o^f$,
with $a\in A$ and $c\in \F_q^\times$, and the solutions of this
polynomial generate a quadratic subfield $F':=F(\delta)$ of $\bD$.
In particular, $F'$ is a splitting field for $\bD$, so by
Proposition \ref{prop-emb} the places in $(R\cup \infty \cup o)$ do
not split in $F'$. Conversely, suppose there is a polynomial
$P(X)=X^2+aX+b$ whose solutions generate a quadratic extension $F'$
of $F$ in which the places in $(R\cup \infty \cup o)$ do not split
and $(b)=\fp_o^f$. Again, by Proposition \ref{prop-emb}, $F'$ embeds
into $\bD$. If $\delta$ is such that $P(\delta)=0$, then the image
of $\delta$ in $\bD$ is an element which is integral over $A$ and
has reduced norm generating $\fp_o^f$.

We conclude that if $f$ is even, then $M(\F_o^{(f)})_{(F,\Pi)}\neq
\emptyset$. On the other hand, if $f$ is odd, then
$M(\F_o^{(f)})_{(F,\Pi)}= \emptyset$ if and only if for every
$\alpha$ which is quadratic over $F$ and satisfies a polynomial of
the form $X^2+aX+c\wp_o^f$, with $a\in A$ and $c\in \F_q^\times$,
some place in $(R\cup \infty \cup o)$ splits in $F(\alpha)$.

\vspace{0.1in}

Next, consider a $(D,\infty,o)$-type $(\tF, \tP)$ such that $\tF$ is
quadratic over $F$. The valuations of $\tP$ at the places of $\tF$
are zero except at the unique place $\widetilde{\infty}$ over
$\infty$ and a place $\tilde{o}$ dividing $o$. Since $o$ splits in
$\tF$, let $\tilde{\tilde{o}}\neq \tilde{o}$ be the other place of
$\tF$ over $o$. Identify $\tF_{\tilde{\tilde{o}}}$ with $F_o$. There
is a bijection
$$
M(k)_{(\tF,\tP)}\overset{\sim}{\To}\tF^\times\bs\left[D^\times(\A^{\infty,o})/(\cD^{\infty,o})^\times\times
(F_o^\times/\cO_o^\times)\times \Z\right],
$$
where $\tF^\times$ is embedded diagonally into
$D^\times(\A^{\infty,o})$ (such embedding exists since $\tF$ is a
splitting field for $D$) and into $F_o^\times$ via
$\tF\hookrightarrow \tF_{\tilde{\tilde{o}}}\cong F_o$. $\tF^\times$
acts on $D^\times(\A^{\infty,o})/(\cD^{\infty,o})^\times$ and
$(F_o^\times/\cO_o^\times)$ via left multiplication, and on $\Z$ via
the composition $\ord_o\circ \Nr_{\tF/F}$. The action of $\Frob_o$
on $M(k)_{(\tF,\tP)}$ corresponds to the translation by $1$ on $\Z$.

The previous paragraph implies that $M(\F_o^{(f)})_{(\tF,\tP)}\neq
\emptyset$ if and only if there is $\delta\in \tF$ which has zero
valuations away from $\tilde{o}$ and $\tilde{\infty}$ and
$\Nr_{\tF/F}(\delta)$ generates $\fp_o^f$. The characteristic
polynomial of such an element has the form
$P_\delta(X)=X^2+aX+c\wp_o^f$, where $a\in A$ and $c\in
\F_q^\times$. If $\alpha$ is a root of this polynomial, then the
places in $(R\cup \infty)$ do not split in $F(\alpha)$ and $o$
splits (since $F(\alpha)\cong \tF$). Moreover, $\wp_o$ does not
divide $\alpha$, as otherwise $\delta$ will have non-zero valuations
at both places over $o$. Conversely, suppose there is a polynomial
$P(X)=X^2+aX+b$ whose solutions generate a quadratic extension $F'$
of $F$ in which the places in $(R\cup \infty)$ do not split, $o$
splits, and $(b)=\fp_o^f$. Let $\delta$ be a root of $P(X)$. Let
$\tilde{o}$ and $\tilde{\tilde{o}}$ be the two places in $F'$ over
$F$. The valuations of $\delta$ at the places of $F'$ are zero,
except at the place $\tilde{\infty}$ over $\infty$, and one or both
of $\tilde{o}\cup \tilde{\tilde{o}}$. Suppose
$\ord_{\tilde{\tilde{o}}}(\delta)=0$. Then, since there is a
$(D,\infty,o)$-type $(\tF, \tP)$ with $\tF=F'$ and
$\ord_{\tilde{o}}(\tP)\neq 0$, $M(\F_o^{(f)})_{(\tF,\tP)}$ will be
non-empty.

We conclude that $\bigsqcup_{\tF\neq F}
M(\F_o^{(f)})_{(\tF,\tP)}=\emptyset$ if and only if for every
$\alpha$ which is quadratic over $F$ and satisfies a polynomial of
the form $X^2+aX+c\wp_o^f$, with $a\in A$ and $c\in \F_q^\times$,
one of the following holds: (1) some place in $(R\cup \infty)$
splits in $F(\alpha)$; (2) $o$ does not split in $F(\alpha)$; (3)
$\wp_o$ divides $\alpha$. Combining this with the conditions from
the case $\tF=F$, we obtain Theorem \ref{thm2.1}.
\end{proof}

\begin{rem}
The analogue of Theorem \ref{thm2.1} over $\Q$ is \cite[Thm.
2.5]{JL}. Jordan and Livn\'e prove this theorem by
``deconstructing'' the zeta-function of the reduction of Shimura
curve at a place of good reduction. The approach via Honda-Tate
theory, as is done presently, has the advantage of giving an
interpretation to the conditions appearing in Theorem \ref{thm2.1}
in terms of isogeny classes.
\end{rem}


\section{Finite places of bad reduction}

\begin{thm}\label{thm3.1} Let $o\in R$. Let $K$ be a finite extension of
$F_o$ of relative degree $f=f(K/F_o)$ and ramification index
$e=e(K/F_o)$.
\begin{enumerate}
\item If $f$ is even, then $X^\cD(K)\neq \emptyset$.
\item If $f$ is odd and $e$ is even, then $X^\cD(K)=\emptyset$ if and only if
in every quadratic extension $F(\sqrt{c\wp_o})/F$, with $c\in
\F_q^\times$, some place in $(R-o)\cup \infty$ splits.
\item If $f$ and $e$ are both odd, then $X^\cD(K)=\emptyset$.
\end{enumerate}
\end{thm}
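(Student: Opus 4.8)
The strategy is to run the same Honda-Tate machinery as in the proof of Theorem \ref{thm2.1}, but now at a place $o$ of bad reduction. Since $o\in R$, the curve $X^\cD$ no longer has good reduction at $o$, so I cannot directly reduce to counting $\F_o^{(f)}$-points on a smooth fibre. Instead I would first recall from $\S$\ref{ssAC} and \cite[$\S$4.4]{BS}, \cite[$\S$6]{Hausberger} that the completion of $X^\cD$ along its fibre at $o$ is an admissible curve in the sense of Definition \ref{def-AC}: its special fibre is a union of projective lines over $\F_o$ meeting at double points, governed by the dual graph $\G^R\bs\cT$ (or rather the appropriate variant including the level-at-$o$ structure coming from the quaternionic uniformization at $o$). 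A $K$-point of $X^\cD$ with $K/F_o$ of ramification index $e$ then either reduces to a smooth $\F_o^{(f)}$-point of one of the $\P^1$-components, or, when $e$ is even, it may reduce to one of the double points with thickness parameter $m_x$ such that $e \cdot m_x$ matches up appropriately (Hensel on $ts=\pi^{m_x}$). This dichotomy is precisely the source of the three cases.

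Next I would analyze the $\F_o^{(f)}$-points on the smooth locus. By the uniformization of the supersingular-type locus at $o$ — the analogue of the Cerednik-Drinfeld description used by Jordan-Livn\'e — the smooth $\bar k$-points of the special fibre are parametrized, via Honda-Tate, by isogeny classes indexed by $(D,\infty,o)$-types $(\tF,\tP)$; the key numerical difference from the good-reduction case is that now $\inv_o(D)=1/2$, so condition (5) of \cite[(9.11)]{LRS} forces $[\tF_{\tilde o}:F_o]$ to be even relative to $[\tF:F]$, which by the trichotomy of \cite[Lem.\ 4.1]{PapGenus} means $\tF$ is a quadratic extension in which $o$ is \emph{ramified} (not split, and not $F$ itself with $o$ split). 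The group-theoretic description of such an isogeny class, as in \cite[$\S$10]{LRS}, gives a bijection with a double-coset space on which $\Frob_o$ acts by translation on a $\Z$-factor, and $M(\F_o^{(f)})$ is nonempty in that class iff there is $\delta\in\tF$ with trivial valuations away from $\tilde\infty$ and $\tilde o$ and $\Nr_{\tF/F}(\delta)$ generating $\fp_o^f$. Since $o$ ramifies in $\tF$, writing $\tilde o$ for the unique place above $o$, the norm of a uniformizer-supported $\delta$ has $\ord_o$ odd exactly when... — here the reduced characteristic polynomial is $X^2+aX+c\wp_o^{?}$ and one checks that $\ord_o(\Nr\,\delta)$ is automatically even (it equals $\deg(\tilde\infty)\cdot$ something, or $2\,\ord_{\tilde o}(\delta)/e_{\tilde o/o}$ with $e_{\tilde o/o}=2$), so a smooth point with $f$ odd is impossible. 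This pins down case (3): if $f$ is odd there are no smooth $\F_o^{(f)}$-points, and the double-point contribution requires $e$ even, so $f$ and $e$ both odd gives $X^\cD(K)=\emptyset$.

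For cases (1) and (2) I would combine the two contributions. When $f$ is even, a split torus / the element $\wp_o^{f/2}$ produces a point in essentially any isogeny class (or one directly exhibits a smooth $\F_o^{(f)}$-rational point on a component defined over $\F_o$), giving $X^\cD(K)\neq\emptyset$ with no further hypothesis. When $f$ is odd but $e$ is even, the only way to get a $K$-point is through a double point of the special fibre; by the admissibility description the relevant thicknesses $m_x$ are $1$ (or, more precisely, the double points all have odd thickness coming from stabilizer orders that are $q$-power times something, but the parity that matters is controlled by $\Odd$), so the condition $e\cdot m_x\equiv 0$ is satisfiable, and the existence of \emph{some} double point defined over $\F_o$ (equivalently, fixed by $\Frob_o$) translates — via Proposition \ref{prop-emb} and the embedding data of the order $\La$ at $o$ — into the statement that some quadratic extension $F(\sqrt{c\wp_o})$ has no split place in $(R-o)\cup\infty$. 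This is the matching-up step I expect to be the main obstacle: carefully tracking how the $\Frob_o$-action on the set of double points (vertices/edges of $\G^R\bs\cT$ with their stabilizers) corresponds to conjugacy classes of embeddings of $\F_o$-ramified quadratic fields into $D$, and verifying that the thickness/ramification bookkeeping $e\cdot m_x$ does not impose extra constraints beyond parity. Once that correspondence is set up cleanly, the three cases fall out, and I would close by remarking (as in the good-reduction case) that this recovers the function-field analogue of the Jordan-Livn\'e bad-reduction theorem.
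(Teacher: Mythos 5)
Your overall skeleton (uniformize at $o$, treat the special fibre as an admissible curve, split $K$-points into smooth reductions versus reductions to double points with the Hensel condition on $ts=\pi^{m_x}$) is the right one, and your treatment of parts (1) and (2) is essentially in the spirit of the paper: part (2) does come down to the existence of a double point at which Frobenius swaps the two branches, which translates via Proposition \ref{prop-emb} into the embeddability of $F(\sqrt{c\wp_o})$ into the quaternion algebra $\bD$ ramified at $(R-o)\cup\infty$. (Note it is $\bD$, not $D$, whose unit groups $\G\supset\G_+$ give the relevant quotient graph $\G_+\bs\cT$; the paper gets this from Hausberger's uniformization and then follows Jordan--Livn\'e's Theorems 5.1--5.2 rather than re-running Honda--Tate at $o$, where the LRS classification of \cite[Ch.~9]{LRS} is not set up for characteristic in $R$.)

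The genuine gap is in part (3). You dispose of it by asserting that the double-point thicknesses $m_x$ are all odd, so that $e\cdot m_x$ even forces $e$ even. This is false: by the analogue of the Deligne--Rapoport computation, $m_x=\#(\Aut(\E)/\F_q^\times)$ for the corresponding superspecial sheaf, so $m_x\in\{1,q+1\}$, and the class number formula shows there really are $2^{\#R-1}$ double points of thickness $q+1$ precisely when $q$ is odd, $\deg(o)$ is even and all places of $R-o$ have odd degree. In that range $e\cdot m_x$ is even with $e$ odd, so your argument proves nothing. The actual content of part (3) is to show that no edge of length $q+1$ in $\G_+\bs\cT$ is inverted by $w_o$: one must rule out the simultaneous existence of $\gamma,\gamma_0\in\G$ and an edge $\tilde y$ of $\cT$ with $\gamma^2=\xi$, $\gamma_0^2=\xi\wp_o$, $\gamma\tilde y=\tilde y$ and $\gamma_0\tilde y=\bar{\tilde y}$. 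The paper does this by showing $\gamma_0^{-1}\gamma\gamma_0\in(A^o)^\times\F_q(\gamma)^\times$, deriving $\gamma\gamma_0=\gamma_0(a+b\gamma)$, and then using the canonical involution to force either $\bD\cong H(\xi,\xi\wp_o)$ (split at $R-o$ by a Hilbert-symbol computation, contradiction) or that $\gamma$ and $\gamma_0$ commute (contradicting that $F(\gamma)$ and $F(\gamma_0)$ are distinct quadratic subfields of a quaternion algebra). None of this is present, or replaceable by a parity remark, in your proposal; without it case (3) is open. Your norm-parity justification for the absence of smooth $\F_o^{(f)}$-points with $f$ odd is also muddled --- the correct reason is that $w_o$ acts without fixed vertices on $\G_+\bs\cT$ (a distance-parity consequence of Lemma \ref{lem1.6}), so every component of the special fibre has field of definition $\F_o^{(2)}$.
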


\begin{rem}
Note that if $q$ is even then $F(\sqrt{c\wp_o})=F(\sqrt{\wp_o})$ is
a purely inseparable extension, so the places $(R-o)\cup \infty$
ramify; in particular, if $f$ or $e$ are even, then $X^\cD(K)\neq
\emptyset$. If $q$ is odd, then there are only two extensions
$F(\sqrt{c\wp_o})$, namely $F(\sqrt{\wp_o})$ and
$F(\sqrt{\xi\wp_o})$, where $\xi\in \F_q^\times$ is a fixed
non-square.
\end{rem}

\begin{rem}
An immediate consequence of Part (3) of Theorem \ref{thm3.1} is
$X^\cD(F_v)=\emptyset$ for $v\in R$. In particular,
$X^\cD(F)=\emptyset$.
\end{rem}

The rest of this section is devoted to the proof of Theorem
\ref{thm3.1}. We start by recalling the analogue of
\u{C}erednik-Drinfeld uniformization for $X^\cD\otimes F_o$ due to
Hausberger \cite{Hausberger}.

Let $\bD$ be the quaternion algebra over $F$ which is ramified
exactly at $(R-o)\cup \infty$. Fix a maximal $A$-order $\fD$ in
$\bD(F)$, and denote
\begin{align*}
&A^o=A[\wp_o^{-1}];\\
&\fD^o=\fD\otimes_A A^o;\\
&\G=(\fD^o)^\times;\\
&\G_+=\{\gamma\in \G\ |\ \ord_o(\Nr(\gamma))\in 2\Z\}.
\end{align*}

If we fix an identification of $\bD_o$ with $\M_2(F_o)$, then $\G$
and $\G_+$ are discrete cocompact subgroup of $\GL_2(F_o)$
containing $\begin{pmatrix} \wp_o & 0\\ 0 & \wp_o\end{pmatrix}$.
Hence both $\G$ and $\G_+$ act naturally on Drinfeld's $o$-adic
half-plane $\Omega:=\P^{1,\an}_{F_o}-\P^{1,\an}_{F_o}(F_o)$ and its
formal analogue $\widehat{\Omega}$ (see $\S$\ref{ssAC}). Denote by
$\Fr_o:\widehat{\cO}_o^\un\to \widehat{\cO}_o^\un$ the lift of the
Frobenius homomorphism $x\mapsto x^{q_o}$ on $\overline{\F}_o$ to an
$\cO_o$-homomorphism. Consider the formal scheme
$$
\widehat{\Omega}^\un:=\widehat{\Omega}\times_{\Spf(\cO_o)}\Spf(\widehat{\cO}_o^\un)
$$
over $\Spf(\cO_o)$. Define an action of $\GL_2(F_o)$ on
$\widehat{\Omega}^\un$ as follows: For an element $\gamma\in
\GL_2(F_o) $ denote by $[\gamma]$ the element defined by $\gamma$ in
$\PGL_2(F_o)$ and set $n(\gamma)=-\ord_o(\det(\gamma))$. Then define
$$
\gamma:(x,u)\mapsto ([\gamma]x, \Fr_o^{n(\gamma)}u)
$$
for $x\in \widehat{\Omega}$ and $u\in \Spf(\widehat{\cO}_o^\un)$,
where $[\gamma]$ acts on $x$ via the natural action. Note that for
$\gamma\in \G$, $\Nr(\gamma)=\det(\iota(\gamma))$, where
$\iota:\G\hookrightarrow \GL_2(F_o)$.

Let $M:=X^\cD\times_C \Spec(\cO_o)$, and denote by $\widehat{M}$ the
completion of $M$ along its special fibre. Note that
$\bD^\times(\A^{\infty,o})\cong D^\times(\A^{\infty,o})$, so the
group $(\cD^{\infty,o})^\times$ can be considered as a subgroup of
$\bD^\times(\A^{\infty,o})$. Let
$$
\cZ:=\bD^\times(F)\bs \bD^\times(\A^{\infty})/
(\cD^{\infty,o})^\times.
$$
The set $\cZ$ is equipped with an obvious right action of
$\bD^\times(F_o)=\GL_2(F_o)$. With this notation, we have the
following uniformization due to Hausberger \cite[Thm.
8.1]{Hausberger}:
\begin{equation}\label{eq-Haus}
\widehat{M}\cong \GL_2(F_o)\bs[\widehat{\Omega}^\un\times \cZ].
\end{equation}
Since by the Strong Approximation Theorem \cite[p. 81]{Vigneras}
$$
\bD^\times(\A^{\infty})=D^\times(F)\cdot \GL_2(F_o)\cdot
(\cD^{\infty,o})^\times,
$$
the isomorphism (\ref{eq-Haus}) can be rewritten as
$$
\widehat{M}\cong \G\bs \widehat{\Omega}^\un.
$$
Denote by $\cO_o^{(2)}$ the quadratic unramified extension of
$\cO_o$. Let $W=\G/\G_+=\{1, w_o\}$, where $w_o$ may be represented
by any element $\gamma_o\in \G$ whose norm generates $\fp_o$. Now
the argument in \cite[p. 143]{BC} implies
\begin{equation}\label{eq-3}
\widehat{M}\cong W\bs [(\G_+\bs\widehat{\Omega})\otimes
\cO_o^{(2)}],
\end{equation}
where the action of $W$ is given by $w_o:(x, u)\mapsto (w_o x,
\Fr_o^{-1} u)$ for a point $x\in \widehat{\Omega}$ and $u\in
\Spec(\cO_o^{(2)})$.

The existence of the uniformization (\ref{eq-3}) implies that $M$ is
an admissible curve, cf. $\S$\ref{ssAC}. The vertices of the
Bruhat-Tits tree $\cT$ of $\PGL_2(F_o)$ can be partitioned into two
disjoint classes such that the vertices in the same class are an
even distance apart; see \cite[p. 71]{SerreT}. Therefore, by Lemma
\ref{lem1.6}, $\G_+$ preserve the two classes of $\Ver(\cT)$ and
$w_o$ interchanges these two classes. This implies that $\G_+$ acts
without inversion on $\cT$, and that $w_o$ has no fixed vertices in
$\G_+\bs \cT$.

\begin{prop}\label{prop3.2} With notation of Theorem \ref{thm3.1},
if $f$ is even, then $M(K)\neq \emptyset$. If $f$ is odd, then
$M(K)\neq \emptyset$ if and only if there is an edge $y\in
\Ed(\G_+\bs\cT)$ such that $e\cdot \ell(y)$ is even and
$w_o(y)=\bar{y}$.
\end{prop}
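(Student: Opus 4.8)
The plan is to analyze $K$-rational points on $M$ via the uniformization (\ref{eq-3}), reducing everything to the combinatorics of the admissible curve $M$ and its dual graph $\G_+\bs\cT$. First I would recall, following \cite[Lem. 1.1]{JL} (the geometric Hensel lemma), that $M(K)\neq\emptyset$ if and only if the special fibre $M_k$ has a smooth point over the residue field $\F_o^{(f)}$, \emph{or} $M_k$ has a double point whose local equation $ts=\pi^{m_x}$ admits a $\cO_K$-point; the latter requires $e\cdot m_x$ to be even (so that $\pi^{m_x}$ becomes a square times a unit in $\cO_K$ up to the ramification, i.e. one can solve $ts=\varpi_K^{e m_x}$ with $t=s=\varpi_K^{e m_x/2}$). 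By Theorem \ref{thm-Kurihara} and the discussion in $\S$\ref{ssAC}, the irreducible components of $M_k$ correspond to vertices of $\G\bs\cT = W\bs(\G_+\bs\cT)$ and the double points to edges, with thickness $m_x = \ell_{\G_+\bs\cT}(y)$ at an edge $y$; moreover each component is a copy of $\P^1$ over $\bar k$, but the Frobenius descent data coming from the twist by $\cO_o^{(2)}$ in (\ref{eq-3}) determines the field of definition of components and points.

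The key point is to track this Frobenius/Galois descent. In (\ref{eq-3}), $\widehat M$ is the quotient by $W=\{1,w_o\}$ of $(\G_+\bs\widehat\Omega)\otimes\cO_o^{(2)}$, where $w_o$ acts on the second factor by the nontrivial element $\Fr_o$ of $\Gal(\cO_o^{(2)}/\cO_o)$. Since $w_o$ acts without fixed vertices on $\G_+\bs\cT$ (established just before the proposition, using Lemma \ref{lem1.6}), every component of $M_k$ is \emph{either} defined over $\F_o$ (when its $\G_+$-class is not $W$-stable — but then it is part of a $w_o$-orbit of size $1$ over $\F_o^{(2)}$... ) — more precisely, the components defined over $\F_o$ come in $w_o$-orbits of size $2$ on $\G_+\bs\cT$ glued into a single $\F_o$-component, while edges fixed by $w_o$ with $w_o(y)=\bar y$ give rise to double points of $M_k$ that are rational over $\F_o$. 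So the analysis splits: a smooth $\F_o^{(f)}$-point exists on some component whenever $f$ is even (the component, a conic over $\F_o$ or $\F_o^{(2)}$, acquires a rational point after a degree-$2$ base change, hence certainly over any even-degree extension — here I would invoke that $\P^1$ over a finite field always has rational points, so in fact only the field of definition of the component matters, and components defined over $\F_o^{(2)}$ always have smooth $\F_o^{(2)}$-points); this handles the ``$f$ even $\Rightarrow M(K)\neq\emptyset$'' half immediately and shows that when $f$ is odd, smooth points can only be found on components defined over $\F_o$, but the $w_o$-action interchanges the two vertex classes of $\cT$ so no vertex is $w_o$-fixed, forcing every $\F_o$-component to actually be a pair of $\bar k$-lines conjugate over $\F_o^{(2)}$ — hence carrying no smooth $\F_o$-point. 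Therefore, when $f$ is odd, the only source of a $K$-point is a double point, i.e. an edge $y$ of $\G_+\bs\cT$ with $w_o(y)=\bar y$ (so the double point is $\F_o$-rational) and with $e\cdot\ell(y)$ even (so the local equation $ts=\varpi_o^{\ell(y)}$ has a $\cO_K$-solution).

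Carrying this out, the steps in order are: (i) invoke \cite[Lem. 1.1]{JL} to translate $M(K)\neq\emptyset$ into the existence of a suitable point on $M_k$; (ii) use (\ref{eq-3}) and Theorem \ref{thm-Kurihara} to identify components and double points of $M_k$, together with their fields of definition, via the $W$-action on $\G_+\bs\cT$ and the Galois twist by $\cO_o^{(2)}$; (iii) observe that $w_o$ interchanges the two vertex-classes of $\cT$, so no component of $M_k$ is geometrically irreducible over $\F_o$ — i.e. every $\F_o$-rational component of $\G\bs\cT=W\bs(\G_+\bs\cT)$ pulls back to two $\bar k$-lines swapped by $\Fr_o$, which has no smooth $\F_o$-point but a smooth $\F_o^{(2)}$-point; (iv) conclude the $f$-even case and, for $f$ odd, reduce to double points; (v) for a double point coming from an edge $y$, show it is $\F_o$-rational precisely when $w_o(y)=\bar y$, and that it lifts to a $\cO_K$-point precisely when $e\cdot\ell(y)$ is even, using the explicit local model $\widehat{\cO}^{\un}[t,s]/(ts-\varpi_o^{\ell(y)})$ from Definition \ref{def-AC}(3). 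The main obstacle I anticipate is step (iii)/(v): correctly bookkeeping the descent data — matching the abstract $W$-quotient-with-Frobenius-twist in (\ref{eq-3}) against the geometry of $M_k$ over $\F_o$ versus $\bar\F_o$, in particular verifying that $w_o(y)=\bar y$ is exactly the condition for the double point (rather than a pair of conjugate double points) to be $\F_o$-rational, and pinning down the parity condition $e\cdot\ell(y)$ even as the precise solvability criterion for $ts=\varpi_K^{e\ell(y)}$ over $\cO_K$. The rest is formal once these local descriptions are in hand.
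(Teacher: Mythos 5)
Your proposal is correct and follows essentially the same route as the paper, whose proof simply defers to the arguments of Theorems 5.1 and 5.2 of Jordan--Livn\'e applied to the uniformization (\ref{eq-3}) together with the preceding observations that $\G_+$ acts without inversion and that $w_o$ has no fixed vertices; you are reconstructing exactly those arguments. The one point to tighten is that the parity condition ``$e\cdot\ell(y)$ even'' arises because the two branches at an $\F_o$-rational double point are necessarily swapped by Frobenius (no vertex being $w_o$-fixed), so that $\varpi_o^{\ell(y)}$ must be a norm from the unramified quadratic extension of $K$ --- precisely the delicate descent step you yourself flag at the end.
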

\begin{proof} This follows from the previous discussion combined with the arguments in the proofs
of Theorems 5.1 and 5.2 in \cite{JL} (essentially verbatim).
\end{proof}

Proposition \ref{prop3.2} establishes Part (1) of Theorem
\ref{thm3.1}. Next, we examine when there is an edge $y\in
\Ed(\G_+\bs \cT)$ such that $w_o(y)=\bar{y}$. Let $y\in \Ed(\G_+\bs
\cT)$ and $\tilde{y}$ be a preimage of $y$ in $\cT$. If $w_o
y=\bar{y}$ then there exits $\gamma\in \G$ such that
$\gamma\tilde{y}=\bar{\tilde{y}}$. Let $v, w\in \Ver(\cT)$ be the
extremities of $\tilde{y}$. Then $\gamma(v)=w$, $\gamma(w)=v$, and
so $\gamma^2(v)=v$. This implies $\gamma^2\in F_o^\times
\mu\fD_o^\times\mu^{-1}$ for some $\mu\in \GL_2(F_o)$. From this we
conclude that $\gamma^2=\wp_o^nc$, where $n\in \Z$ and $c\in
\mu\fD_o^\times\mu^{-1}$. Since the distance between $v$ and $\gamma
v$ is $1$, $\ord_o(\Nr(\gamma))=n$ is odd. Put $n=2m+1$ and replace
$\gamma$ by $\wp_o^{-m}\gamma$. Then we get $\gamma^2=c\wp_o$ and
$c\in \mu\fD_o^\times\mu^{-1}\cap \Gamma\subset (\fD')^\times$,
where $\fD'$ is a maximal $A$-order in $\bD$. Since $\bD$ is
ramified at $\infty$, $(\fD')^\times\cong \F_q^\times$ or
$\F_{q^2}^\times$; see \cite[p. 383]{DvG}. Therefore, $c$ is
algebraic over $\F_q$. The field $L:=F(\gamma)$ embeds into $\bD$,
so by Proposition \ref{prop-emb}, $L/F$ is a quadratic extension.
Since $o$ ramifies in $L$, $\F_q$ is algebraically closed in $L$. We
get $c\in \F_{q^2}^\times\cap L=\F_q^\times$. Conversely, suppose
there is $c\in \F_q^\times$ such that $F(\sqrt{c\wp_o})$ embeds into
$\bD$. The image of $\sqrt{c\wp_o}$ is contained in some maximal
$A^o$-order of $\bD$. Since $\bD$ is split at $o$ and $\Pic(A^o)=1$,
a theorem of Eichler implies that all such orders are conjugate in
$\bD$; see \cite[Cor. III.5.7]{Vigneras}. Therefore, we can assume
that $\fD^o$ contains an element $\gamma$ such that $\gamma^2=c
\wp_o$. Such an element is obviously in $\G$. We can choose an
embedding $\G\hookrightarrow \GL_2(F_o)$
such that $\gamma$ maps to the matrix $\begin{pmatrix} 0 & 1\\
c\wp_o & 0\end{pmatrix}$. Using this matrix representation, it is
easy to check that there is an edge $\tilde{y}\in \Ed(\cT)$ such
that $\gamma \tilde{y}=\bar{\tilde{y}}$. Then the image $y$ of
$\tilde{y}$ in $\G_+\bs \cT$ will satisfy $w_o y=\bar{y}$. Finally,
observe that $F(\sqrt{c\wp_o})$ embeds into $\bD$ if and only if the
places in $(R-o)\cup \infty$ do not split in this extension
(Proposition \ref{prop-emb}). Combining the previous discussion with
Proposition \ref{prop3.2}, we establish Part (2) of Theorem
\ref{thm3.1}.

\vspace{0.1in}

From now on we assume that $f$ and $e$ are both odd. We start the
proof of Part (3) of Theorem \ref{thm3.1} with a lemma:

\begin{lem}\label{lem3.4}
An edge of $\G_+\bs\cT$ has length $1$ or $q+1$. The number of edges
of length $q+1$ is equal to
$$
2^{\# R-1}\cdot \Odd(R-o)\cdot (1-\Odd(o)).
$$
\end{lem}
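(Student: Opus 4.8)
The plan is to analyze the stabilizers $(\G_+)_{\tilde y}$ of edges $\tilde y$ of $\cT$ and translate the count into a count of conjugacy classes of embeddings of quadratic fields, using Proposition \ref{prop-emb}. First I would recall the general structure of the action: by Theorem \ref{thm-Kurihara} the length of an edge $y\in\Ed(\G_+\bs\cT)$ is $\#(\G_+)_{\tilde y}$ for a preimage $\tilde y$. The stabilizer of an edge is the intersection of the stabilizers of its two extremities, each of which is (conjugate to) $F_o^\times\fD_o^\times\cap\G_+$, i.e.\ a finite group coming from the units of a maximal order in $\bD$. Since $\bD$ is ramified at $\infty$, the unit group of any maximal $A$-order in $\bD$ is $\F_q^\times$ or $\F_{q^2}^\times$ (as cited, \cite[p.~383]{DvG}), so modulo scalars $(\G_+)_{\tilde y}$ is trivial or cyclic of order $q+1$. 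This gives the dichotomy: the length of an edge is $1$ or $q+1$.

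Next I would count the edges of length $q+1$. Such an edge has stabilizer, modulo the central $\F_q^\times$, cyclic of order $q+1$; a generator $\gamma$ satisfies (after scaling by a power of $\wp_o$) an equation making $F(\gamma)$ a quadratic extension of $F$ that embeds into $\bD$ and in which $o$ splits (since the two vertices of $\tilde y$ are swapped or fixed appropriately — more precisely, an edge is fixed pointwise iff both vertices are fixed, which forces $\gamma\in$ a maximal order at $o$ with a root of unity of order $q+1$, hence $F(\gamma)$ contains $\F_{q^2}$). The key point is that $\F_{q^2}\subset F(\gamma)$ forces $\F_{q^2}$ to be contained in $\bD$, which by the structure of quadratic subfields means the constant field extension $F\cdot\F_{q^2}=\F_{q^2}(T)$ embeds into $\bD$. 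By Proposition \ref{prop-emb} this happens iff no place in $(R-o)\cup\infty$ splits in $\F_{q^2}(T)$, i.e.\ iff every place in $(R-o)\cup\infty$ has odd degree — this is exactly the factor $\Odd(R-o)$. The factor $(1-\Odd(o))$ similarly records that $o$ itself must have even degree so that $o$ splits in $\F_{q^2}(T)$ (a place $v$ splits in the constant extension $\F_{q^2}(T)$ iff $\deg v$ is even); without $o$ splitting there is no fixed edge at all over $o$. Finally, the factor $2^{\#R-1}$ counts the number of $\G_+$-orbits of such edges: by Proposition \ref{prop-emb} all embeddings of $\F_{q^2}(T)$ into $\bD$ are conjugate under $\bD^\times$, but passing to $\G_+$-orbits and accounting for the class-number / genus-type bookkeeping for the maximal order (the number of conjugacy classes of optimal embeddings, governed by $\Pic$ data and the number of ramified places) produces $2^{\#R-1}$.

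The main obstacle will be the last step: pinning down the constant $2^{\#R-1}$ as the precise number of $\G_+$-orbits of length-$(q+1)$ edges. This requires an Eichler-type trace/mass computation — counting conjugacy classes of embeddings of the order of integers of $\F_{q^2}(T)$ into $\fD$ (or rather into $\fD^o$), localized appropriately, which brings in a product of local embedding numbers over the places in $R$. Each ramified place contributes a factor $2$ to this count (two local optimal-embedding classes), and one global relation cuts the product down by one, giving $2^{\#R}/2=2^{\#R-1}$; making this bookkeeping precise, and checking it is compatible with the half-plane picture (i.e.\ that distinct orbits of fixed edges really correspond to distinct embedding classes and conversely), is where the real work lies. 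The dichotomy of lengths and the two $\Odd$-factors, by contrast, follow fairly directly from Proposition \ref{prop-emb}, Lemma \ref{lem1.6}, and the unit-group structure of maximal orders in a quaternion algebra ramified at $\infty$.
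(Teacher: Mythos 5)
Your route is genuinely different from the paper's. The paper does not work with edge stabilizers on $\cT$ at all: it uses the moduli interpretation of the special fibre. The singular points of $M\times_{\cO_o}k$ are the superspecial $\cD$-elliptic sheaves; by results of \cite{PapEndom} their automorphism groups are the unit groups of the right orders $\cH_i$ of the ideal classes of an Eichler $A$-order of level $\fp_o$ in $\bD$, a Deligne--Rapoport-type argument identifies the thickness $m_{x_i}$ with $\#(\cH_i^\times/\F_q^\times)$, and the count of classes with $\cH_i^\times\cong\F_{q^2}^\times$ is then read off from the Denert--van Geel class number formula \cite[Thm.~9]{DvG}; the identification $\Gr(M)\cong\G_+\bs\cT$ transports this to the graph. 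Your first step (lengths are $1$ or $q+1$ because edge stabilizers modulo the centre are units of an order in $\bD$, which is ramified at $\infty$) and your explanation of the two $\Odd$-factors (embeddability of $\F_{q^2}(T)$ into $\bD$ forces $\Odd(R-o)=1$; existence of a fixed \emph{edge} forces the torus at $o$ to be split, i.e.\ $\Odd(o)=0$) are correct and are in fact cleaner than what one extracts from the paper.

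The genuine gap is the constant $2^{\#R-1}$. Your proposed mechanism --- ``each ramified place contributes a factor $2$, and one global relation cuts the product down by one'' --- is not how Eichler's optimal embedding theory works; there is no such global relation, and as stated this step would not go through. What actually has to be controlled is the passage from conjugacy classes of embeddings of $\F_{q^2}[T]$ (localized at $\fp_o$) into $\fD^o$ to $\G_+$-orbits of edges: when $o$ splits in $L=\F_{q^2}(T)$, a subgroup $\F_{q^2}^\times\subset\G_+$ fixes pointwise an entire apartment of $\cT$, and the number of $\G_+$-orbits of edges on that apartment is governed by the image in $\GL_2(F_o)$ of the normalizer of the torus intersected with $\G_+$ (translations by elements of $(\cA^o)^\times$ of even $o$-valuation, plus the possible Galois flip); this is exactly where factors of $2$ appear or cancel, and it is the step your sketch omits. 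To complete your plan you would need either this careful apartment/normalizer bookkeeping together with the optimal embedding numbers (two at each inert place of $(R-o)\cup\infty$ and at the split level place $o$), or you could simply substitute the paper's shortcut and quote the class number formula for Eichler orders of level $\fp_o$ in $\bD$.
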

\begin{proof} Let $k$ be a fixed algebraic
closure of $\F_o$. The singular points of $M\times_{\cO_o} k$
represent the isomorphism classes of $\cD$-elliptic sheaves of
characteristic $o$ over $k$ which are \textit{superspecial}; see
\cite[Prop. 5.9]{PapEndom}. Let $x$ be a singular point and let $\E$
be the superspecial $\cD$-elliptic sheaf corresponding to $x$. There
is a natural notion of an automorphism group $\Aut(\E)$ of $\E$.
This is a finite group isomorphic to $\F_q^\times$ or
$\F_{q^2}^\times$; see \cite[Lem. 5.7]{PapEndom}. An argument
similar to \cite[Thm. VI.6.9]{DR} implies that the integer $m_x$
attached to $x$ in Definition \ref{def-AC} is equal to $\#
(\Aut(\E)/\F_q^\times)$. Thus, $m_{x}$ is equal to $1$ or $q+1$.

Next, since $\E$ is superspecial, $\Aut(\E)$ is the group of units
in an Eichler $A$-order $\cH$ in $\bD$ of level $\fp_o$; see
\cite[Thm. 5.3]{PapEndom}. Let $I_1,\cdots, I_h$ represent the
isomorphism classes of locally free left $\cH$-ideals. For $1\leq
i\leq h$, we denote with $\cH_i$ the right order of the respective
$I_i$. Each $\cH_i$ is an Eichler $A$-order of level $\fp_o$. By
\cite[Thm. 5.4]{PapEndom}, the singular points $x_1,\dots, x_h$ of
$M\times_{\cO_o} k$ are in natural bijection with $I_1,\cdots, I_h$
and $m_{x_i}=\#(\cH_i^\times/\F_q^\times)$. Now the class number
formula for Eichler orders \cite[Thm. 9]{DvG} implies
$$
\#\{1\leq i\leq h\ |\ m_{x_i}=q+1\}= 2^{\# R}\cdot \Odd(R-o)\cdot
(1-\Odd(o)).
$$

Finally, Theorem \ref{thm-Kurihara}, \cite[Prop. 3.7]{JL} and
(\ref{eq-3}) imply that the dual graph $\Gr(M)$, as a graph with
lengths, is isomorphic to $\G_+\bs \cT$, so the claim of the lemma
follows from the previous paragraph.
\end{proof}

Thanks to Proposition \ref{prop3.2} and Lemma \ref{lem3.4}, to show
that $M(K)=\emptyset$ when $f$ and $e$ are both odd we can assume
that $q$ is odd and $\deg(o)$ is even.

Let $\xi$ be a fixed non-square in $\F_q^\times$. Let $y\in
\Ed(\G_+\bs\cT)$ be an edge with $\ell(y)=q+1$. Let $\tilde{y}\in
\Ed(\cT)$ be any edge of $\cT$ lying above $y$. By the argument in
the proof of Lemma \ref{lem3.4}, there exists $\gamma\in \G_+$ such
that $\gamma^2=\xi$ and
$\Stab_{\G_+}(\tilde{y})=\F_q(\gamma)^\times$.

Now let $y\in \Ed(\G_+\bs\cT)$ be an edge such that
$w_o(y)=\bar{y}$, and let $\tilde{y}\in \Ed(\cT)$ be any edge of
$\cT$ lying above $y$. From our earlier discussions we know that
there exists $\gamma_0\in \G$ such that $\gamma_0^2=c\wp_o$ with
$c\in \F_q^\times$, and $\gamma_0 \tilde{y}=\bar{\tilde{y}}$. Since
$\bD$ is ramified at $\infty$, in the extension $F(\gamma_0)/F$ the
place $\infty$ does not split. On the other hand, by assumption
$\deg(o)$ is even, so we can assume $c=\xi$ (as $\infty$ splits in
the extension $F(\sqrt{\wp_o})/F$).

The previous discussion, combined with Proposition \ref{prop3.2},
implies that if $M(K)\neq 0$, then there exists $\tilde{y}\in
\Ed(\cT)$ and $\gamma, \gamma_0\in \G$ such that
\begin{equation}\label{eq-4}
\gamma^2=\xi,\quad \gamma_0^2=\xi \wp_o,\quad \gamma
\tilde{y}=\tilde{y},\quad \gamma_0\tilde{y}=\bar{\tilde{y}}.
\end{equation}
Note that $\gamma_0^{-1}\gamma\gamma_0$ is in $\G$ and moreover,
$\ord_o(\Nr(\gamma_0^{-1}\gamma\gamma_0))=\ord_o(\Nr(\gamma))=0$, as
$\gamma$ is algebraic over $\F_q$. Hence
$\gamma_0^{-1}\gamma\gamma_0\in \G_+$. On the other hand,
$\gamma_0^{-1}\gamma\gamma_0\cdot \tilde{y}=\tilde{y}$. Therefore,
$$
\gamma_0^{-1}\gamma\gamma_0\in  (A^o)^\times \F_q(\gamma)^\times.
$$
We conclude that there is $s\in \Z$ and $a,b\in \F_q$ ($a, b$ are
not both zero) such that
$$
\gamma\gamma_0=\wp_o^s\gamma_0(a+b\gamma).
$$
Comparing the norms of both sides, we must have $s=0$. Thus,
\begin{equation}\label{eq-1}
\gamma\gamma_0=\gamma_0(a+b\gamma).
\end{equation}
Note that $\gamma$ and $\gamma_0$ are pure quaternions, so
$\gamma'=-\gamma$ and $\gamma_0'=-\gamma_0$. In particular,
$(\gamma\gamma_0)'=\gamma_0'\gamma'=\gamma_0\gamma$. On the other
hand, using (\ref{eq-1}),
$$
(\gamma\gamma_0)'=(\gamma_0(a+b\gamma))'=(a+b\gamma)'\gamma_0' =
(a-b\gamma)(-\gamma_0)=-a\gamma_0+b\gamma\gamma_0.
$$
Using (\ref{eq-1}) again, this last expression is equal to
$$
-a\gamma_0+b\gamma_0(a+b\gamma)=-a\gamma_0+ab\gamma_0+b^2\gamma_0\gamma.
$$
Overall,
$$
\gamma_0\gamma=(ab-a)\gamma_0+b^2\gamma_0\gamma.
$$
Multiplying both sides by $\gamma_0^{-1}$ from the left, we conclude
that either $b=-1$ and $a=0$, or $b=1$ and $a\in \F_q$. In the first
case, (\ref{eq-1}) becomes $\gamma_0\gamma=-\gamma\gamma_0$, so
$\bD\cong H(\xi, \xi \wp_o)$. Now, since both $\xi$ and $\xi \wp_o$
are units at the places in $R-o$, the Hilbert symbols $(\xi, \xi
\wp_o)_v$ for $v\in R-o$ are equal to $1$; see Corollary to
Proposition XIV.8 in \cite{SerreLF}. This implies that $\bD$ is
split at the places in $R-o$ (see \cite[p. 32]{Vigneras}), which is
a contradiction. Therefore, we must have $b=1$ and
$$
\gamma\gamma_0=\gamma_0(a+\gamma).
$$
Multiplying both sides of this equality by $\gamma_0\gamma$ from the
right, we get $\xi^2 \wp_o$ on the left hand-side and
$$
a\gamma_0^2\gamma+\gamma_0\gamma\gamma_0\gamma =a\xi
\wp_o\gamma+\gamma_0\gamma_0(a+\gamma)\gamma=a\xi \wp_o\gamma+\xi
\wp_o(a\gamma +\xi).
$$
on the right hand-side. This forces $2a\xi \wp_o=0$, so $a=0$. But
if $a=0$ and $b=1$, then (\ref{eq-1}) says that $\gamma$ and
$\gamma_0$ commute in $\bD$. Therefore, $F(\gamma, \gamma_0)$ is a
subfield of $\bD$. This subfield is necessarily quadratic over $F$.
On the other hand, $F(\gamma_0)$ and $F(\gamma)$ are both quadratic
and linearly disjoint over $F$. This leads to a contradiction.
Overall, we conclude that the conditions (\ref{eq-4}) cannot hold
simultaneously, which finishes the proof of Part (3) of Theorem
\ref{thm3.1}.


\section{The place at infinity}\label{SecGU}
As in the introduction, let $\La$ be a maximal $A$-order in $D$.
From now on we denote $K:=\Fi$, $\cO:=\cO_\infty$,
$k:=\F_\infty\cong\F_q$, and $\G:=\La^\times$. The group $\G$ can be
considered as a discrete subgroup of $\GL_2(K)$ via an embedding
$$
\iota:\G\hookrightarrow D^\times(F)\hookrightarrow D^\times(K)\cong
\GL_2(K).
$$
Note that for $\gamma\in \G$, $\det(\iota(\gamma))=\Nr(\gamma)\in
\F_q^\times$, so $\ord_\infty\det(\iota(\gamma))=0$. We fix some
embedding $\iota$ and omit it from notation. $\G$ acts on Drinfeld's
$\infty$-adic half-plane $\Omega:=\P^{1,\an}_{K}-\P^{1,\an}_{K}(K)$
and its associated formal scheme $\widehat{\Omega}$. Let
$M:=X^\cD\times_C \Spec(\cO)$, and denote by $\widehat{M}$ the
completion of $M$ along its special fibre. We have the following
uniformization theorem due to Blum and Stuhler \cite[Thm.
4.4.11]{BS}
\begin{equation}\label{eq-unif1}
\widehat{M} \cong D^\times(F)\bs [\widehat{\Omega}\times
(D^\times(\A^\infty)/(\cD^\infty)^\times)].
\end{equation}
Since $D$ is split at $\infty$, the Strong Approximation Theorem for
$D^\times$ implies (cf. \cite[p. 89]{Vigneras})
\begin{equation}\label{eq-sat}
D^\times(F)\bs D^\times(\A^\infty)/(\cD^\infty)^\times\cong
F^\times\bs (\A^\infty)^\times/\prod_{x\in
|C|-\infty}\cO_x^\times\cong \Pic(A)=1.
\end{equation}
Note that $\G\cong D^\times(F) \cap (\cD^\infty)^\times$. Therefore,
(\ref{eq-unif1}) reduces to
\begin{equation}\label{eq-unifinf}
\widehat{M} \cong \G\bs\widehat{\Omega}.
\end{equation}
The uniformization (\ref{eq-unifinf}) will play a key role in our
examination of rational points on $X^\cD$ over finite extensions of
$K$. We start with an analysis of the quotient graph $\G\bs \cT$,
which can be considered as an analogue of fundamental domains of
Shimura curves (here $\cT$ is the Bruhat-Tits tree of $\PGL_2(K)$).
By Lemma \ref{lem1.6}, $\G$ acts without inversion on $\cT$ and, by
Theorem \ref{thm-Kurihara}, $\G\bs \cT$ is the dual graph of $M$; in
particular, $\G\bs \cT$ is a finite graph. This last fact can easily
be proved directly, without an appeal to the uniformization theorem:

\begin{lem}\label{propFG}
The quotient graph $\G\bs \cT$ is finite.
\end{lem}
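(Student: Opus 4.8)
The statement is that $\G\bs\cT$ is a finite graph, where $\G=\La^\times$ sits discretely in $\GL_2(K)$ with $K=\Fi$. Since every vertex of $\cT$ has degree $q+1$, it suffices to show that $\Ver(\G\bs\cT)=\G\bs\Ver(\cT)$ is finite; finiteness of the edge set then follows automatically. So the plan is to produce a finite set of vertices of $\cT$ that meets every $\G$-orbit.

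First I would recall (or reprove) the standard description of $\Ver(\cT)$ in adelic/local terms. The vertices of $\cT$ correspond bijectively to $\GL_2(K)/K^\times\GL_2(\cO)$, and since $\ord_\infty\det(\iota(\gamma))=\ord_\infty\Nr(\gamma)=0$ for all $\gamma\in\G$ (as $\Nr(\gamma)\in\F_q^\times$), the group $\G$ actually stabilizes each of the two classes in the bipartition of $\Ver(\cT)$; in particular it preserves the subset corresponding to matrices of even determinant valuation, on which $\G$ acts through $\GL_2^+(K)/K^\times\GL_2(\cO)$. The cleanest route is then to interpret $\G\bs\cT$ adelically: the vertices of $\cT$ parametrize homothety classes of $\cO$-lattices in $K^2\cong D\otimes_F K/(\text{a fixed }\cD_\infty)$-stuff, and a $\G$-orbit of vertices corresponds, via strong approximation, to an isomorphism class of locally free rank-one right $\cD$-module (equivalently, a locally free left $\La$-ideal in $D$). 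Concretely, I would set up a bijection between $\G\bs\Ver(\cT)$ and (a subset of) the double coset space
$$
D^\times(F)\bs D^\times(\A)/\big(\cD^\infty{}^\times\times\GL_2(\cO)\big),
$$
using that $\G\cong D^\times(F)\cap(\cD^\infty)^\times$, exactly as in the derivation of (\ref{eq-sat}).

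The key step is then finiteness of that double coset space, which is the classical finiteness of the class number of $D$: the set $D^\times(F)\bs D^\times(\A^\infty)/(\cD^\infty)^\times$ is finite, and in fact by the Strong Approximation Theorem for $D^\times$ (applicable since $D$ splits at $\infty$) together with the finiteness of $\Pic(A)$, it is a single point — this is precisely (\ref{eq-sat}). Feeding this back, the vertices of $\G\bs\cT$ inject into the quotient of the finite set $\GL_2(K)/K^\times\GL_2(\cO)$ restricted to a fixed determinant parity by the (now) transitively-acting global group modulo $\GL_2(\cO)$; but more simply, $\G\bs\Ver(\cT)$ surjects onto a set in bijection with $K^\times\GL_2(\cO)\bs\GL_2(K)/\iota(\G)$, and since $\iota(\G)$ is a cocompact lattice in $\GL_2(K)$ (again because $D$ is a division algebra split at $\infty$), this double coset space is finite. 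Alternatively and most self-containedly: $\G$ cocompact in $\GL_2(K)$ implies $\G$ acts on the locally finite tree $\cT$ with compact quotient, and a compact quotient of a locally finite graph is finite.

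The main obstacle is just choosing which of these equivalent inputs to invoke cleanly without circularity — in particular, avoiding appeal to the uniformization theorem (\ref{eq-unifinf}) itself, since the lemma is advertised as an independent fact. The honest lightweight argument is: (i) $D$ is a division algebra, so $\iota(\G)$ contains no unipotents and the quotient $\iota(\G)\bs\GL_2(K)$ is compact (this cocompactness is standard for unit groups of division quaternion algebras, cf. the cocompactness used implicitly throughout $\S$\ref{ssAC}); (ii) $\cT$ is the building of $\GL_2(K)$, so $\GL_2(\cO)$-orbits on $\GL_2(K)$, i.e. vertices, form a discrete set acted on properly by $\GL_2(K)$; (iii) a compact group-quotient of a locally finite complex is a finite complex. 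Hence $\G\bs\cT$ is finite. I would expect the write-up to take the cocompactness of $\iota(\G)$ as known (it is used without comment in $\S$\ref{ssAC} via Mumford's construction) and spend the remaining lines spelling out (iii) concretely: pick finitely many vertices $v_1,\dots,v_n$ whose $\GL_2(\cO)$-saturation covers a fundamental domain for $\iota(\G)$, and observe their $\G$-translates exhaust $\Ver(\cT)$.
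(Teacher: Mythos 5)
Your proposal is correct and follows essentially the same route as the paper: identify $\Ver(\cT)$ with $\GL_2(K)/Z(K)\cdot\GL_2(\cO)$, deduce cocompactness of $\G$ in $\GL_2(K)$ from the compactness of $D^\times(F)\bs D^\times(\A)/Z(K)$ (valid since $D$ is a division algebra) together with the strong-approximation identity (\ref{eq-sat}), and conclude finiteness from the openness of $\GL_2(\cO)$ (equivalently, local finiteness of $\cT$). The extra detours through ideal classes are unnecessary but harmless.
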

\begin{proof}
It is enough to show that $\G\bs \cT$ has finitely many vertices.
The group $\GL_2(K)$ acts transitively on the set of lattices in
$K^2$. The stabilizer of a vertex is isomorphic to $Z(K)\cdot
\GL_2(\cO)$, where $Z$ denotes the center of $\GL(2)$. This yields a
natural bijection $\Ver(\cT)\cong \GL_2(K)/Z(K)\cdot \GL_2(\cO)$,
and
$$
\Ver(\G\bs \cT)\cong \G\bs \GL_2(K)/Z(K)\cdot \GL_2(\cO).
$$
We will show that the above double coset space is finite. Since $D$
is a division algebra, $D^\times(F)\bs D^\times(\A)/Z(K)$ is
compact, cf. \cite[Ch. III.1]{Vigneras}. Thus, using (\ref{eq-sat}),
we see that $\G\bs \GL_2(K)/Z(K)$ is compact since it is the image
of $D^\times(F)\bs D^\times(\A)/Z(K)$ under the natural quotient map
$D^\times(\A)\to D^\times(\A)/(\cD^\infty)^\times$. Finally, since
$\GL_2(\cO)$ is open in $\GL_2(K)$, $\#\Ver(\G\bs \cT)$ is finite.
\end{proof}

\begin{prop}\label{Prop2.1}
Let $v\in \Ver(\cT)$ and $y\in \Ed(\cT)$. Then $\G_v\cong
\F_q^\times$ or $\G_v\cong \F_{q^2}^\times$, and $\G_y\cong
\F_q^\times$.
\end{prop}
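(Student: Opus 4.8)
The plan is to identify the stabilizers $\G_v$ and $\G_y$ with unit groups of orders in $D$, and then invoke the fact that $D$ is ramified at $\infty$ to constrain these unit groups. First I would unwind the definitions: a vertex $v\in\Ver(\cT)$ corresponds to a homothety class $[\Lambda_v]$ of $\cO$-lattices in $K^2$, and an element $\gamma\in\G$ fixes $v$ precisely when $\iota(\gamma)$ normalizes $\Lambda_v$ up to a scalar. Since $\det(\iota(\gamma))=\Nr(\gamma)\in\F_q^\times$ is a unit in $\cO$, the scalar is forced to be trivial, so $\G_v=\{\gamma\in\G\mid \iota(\gamma)\Lambda_v=\Lambda_v\}$. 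The ring $\{\gamma\in D^\times(F)\cap(\cD^\infty)^\times \mid \iota(\gamma)\Lambda_v\subseteq\Lambda_v\}$ together with $0$ is an $A$-order $\cH_v$ in $D$: it is stable under the finite-place maximal order conditions defining $\La$ and, at $\infty$, it is cut out by stabilizing a lattice in $\GL_2(K)$, hence is an order in $D_\infty\cong\M_2(K)$ (this $\infty$-component is $\GL_2(\cO)$ up to conjugacy). Then $\G_v=\cH_v^\times$. The same analysis applied to an edge $y$ with extremities $v,w$ gives $\G_y=\G_v\cap\G_w=\cH_y^\times$, where $\cH_y$ is an $A$-order whose $\infty$-component is an Iwahori subgroup of $\GL_2(K)$; in particular $\cH_y$ is an \emph{Eichler} order of level $\infty$.

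Next I would use that $D$ is a \emph{division} algebra ramified at $\infty$ to pin down these unit groups. The key input is that $D$ ramified at $\infty$ forces $\F_q$ to be algebraically closed in $D$ (any larger constant field inside $D$ would have to embed a quadratic extension of $\F_q$, forcing $\F_q(\sqrt{\cdot})$ inert/split structure incompatible with ramification at $\infty$ — more precisely, as in the proof of Lemma \ref{lem3.4} and the computation in $\S$\ref{SecGU}, the units of a maximal $A$-order in a quaternion algebra ramified at $\infty$ form a group isomorphic to $\F_q^\times$ or $\F_{q^2}^\times$, by \cite[p.~383]{DvG}). So for the vertex stabilizer $\G_v=\cH_v^\times$: any element $\gamma\in\G_v$ has $\Nr(\gamma)\in\F_q^\times$ and is integral over $A$ with $\gamma\La\subseteq\La$ at every finite place, hence $\gamma$ lies in a maximal $A$-order; being a unit of reduced norm in $\F_q^\times$, it is a torsion element, so $F(\gamma)$ is a constant field extension of $F$ (a finite field), which by the ramification-at-$\infty$ constraint and Proposition \ref{prop-emb} is at most $\F_{q^2}$. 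This yields $\G_v\hookrightarrow\F_{q^2}^\times$, and since $\G_v\supseteq Z(\G)\cong\F_q^\times$ (the central units $\F_q^\times\subset\La^\times$ fix every vertex), $\G_v$ is a subgroup of $\F_{q^2}^\times$ containing $\F_q^\times$; such a subgroup is either $\F_q^\times$ or $\F_{q^2}^\times$.

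For the edge stabilizer $\G_y$: again $\G_y=\cH_y^\times\supseteq\F_q^\times$, and I claim $\G_y=\F_q^\times$. Suppose not; then $\G_y\cong\F_{q^2}^\times$, so there is $\gamma\in\G_y$ with $F(\gamma)/F$ the constant extension $\F_{q^2}F$, and $\gamma$ fixes both extremities $v\ne w$ of a preimage $\tilde y$ in $\cT$. But $\cH_y=\cH_v\cap\cH_w$ is an Eichler order of level $\infty$, and a Eichler order of nontrivial level in $D_\infty\cong\M_2(K)$ — i.e.\ an Iwahori, which is the stabilizer of an edge of $\cT$ — cannot contain $\F_{q^2}$: an embedding $\F_{q^2}\hookrightarrow\M_2(\cO)\cong\End_\cO(\Lambda)$ acting irreducibly on the $\F_q$-vector space $\Lambda/\varpi_\infty\Lambda$ would act transitively (up to scalars) on the lines of $\Lambda/\varpi_\infty\Lambda$, hence cannot stabilize the flag defining the Iwahori; equivalently, $\gamma$ fixing both $v$ and $w$ but with $\F_q(\gamma)^\times\cong\F_{q^2}^\times$ would have $\gamma$ act on the $(q+1)$ neighbors of $v$ with the neighbor $w$ fixed, contradicting that the nontrivial $\F_{q^2}^\times/\F_q^\times$-action on those $q+1$ neighbors is free (this is exactly the $q+1$ versus $1$ dichotomy of Lemma \ref{lem3.4}). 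Hence $\G_y=\F_q^\times$, as claimed.

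The main obstacle I anticipate is the bookkeeping in the first paragraph: carefully checking that the stabilizer $\G_v$ really is the full unit group of a genuine $A$-order $\cH_v$ in $D$ (as opposed to just a subgroup of it), using the integrality at finite places coming from $\gamma\in(\cD^\infty)^\times$ and the lattice-stabilizer condition at $\infty$, and correctly identifying $\cH_y$ as an Eichler order of level $\infty$. Once that identification is in place, the ramification-at-$\infty$ input ($D$ a division algebra ramified at $\infty$ $\Rightarrow$ torsion units live in $\F_{q^2}^\times$) closes the argument cleanly, and the edge case reduces to the elementary observation about $\F_{q^2}^\times$ acting freely on the $q+1$ neighbors of a vertex modulo scalars.
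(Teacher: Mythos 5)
There are two genuine problems. First, your argument is built on the premise that $D$ is ramified at $\infty$, but in this section $D$ is \emph{split} at $\infty$ --- that is the standing hypothesis of the paper (the algebra ramified at $\infty$ is the auxiliary $\bD$ of the bad-reduction section, not the $D$ whose maximal order defines $\G$ here). Consequently the fact you quote from \cite[p.~383]{DvG} (that the unit group of a maximal $A$-order is $\F_q^\times$ or $\F_{q^2}^\times$) does not apply to $\La$: the group $\G=\La^\times$ is infinite, having a free quotient of rank $g(R)$ by Theorem \ref{thmGTI}. Relatedly, the deduction ``$\Nr(\gamma)\in\F_q^\times$ and $\gamma$ integral, hence $\gamma$ is torsion'' is false: \emph{every} element of $\G$ has reduced norm in $\F_q^\times$ and lies in the maximal order $\La$. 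The finiteness of $\G_v$ must instead come from the fact that $\G$ is discrete in $\GL_2(K)$ while the stabilizer of a vertex, $Z(K)\cdot\GL_2(\cO)$, is compact modulo the center; equivalently, the ring you call $\cH_v$ is not an $A$-order (it cannot contain $T$, which has a pole at $\infty$) but a finite $\F_q$-algebra, and that is what makes its unit group finite.

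Second, even granting that every $\gamma\in\G_v$ is torsion of order prime to $p$ and generates a constant extension $\F_q(\gamma)$ with $[\F_q(\gamma):\F_q]\le 2$ (this part is fine, via Proposition \ref{prop-emb} and the division-algebra argument), you assert without proof that $\G_v$ embeds into a \emph{single} copy of $\F_{q^2}^\times$. A priori two torsion elements of $\G_v$ could generate distinct quadratic constant subfields of $D$ and fail to commute, making $\G_v$ nonabelian; ruling this out is the crux of the proposition. The paper does it by noting that $\G_v/\F_q^\times$ would then be a finite subgroup of $\PGL_2(K)$ of order prime to $p$ containing two non-commuting elements of order $q+1$, contradicting the classification of such subgroups (cyclic, dihedral, $A_4$, $S_4$, $A_5$); alternatively one could invoke Herstein's theorem that finite subgroups of the multiplicative group of a division ring of positive characteristic are cyclic. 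Your treatment of the edge stabilizer --- the Iwahori cannot contain $\F_{q^2}^\times$ because $\F_{q^2}$ acts irreducibly on $\Lambda/\varpi_\infty\Lambda$ --- is correct and is essentially the paper's argument, but it rests on the unproved vertex case.
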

\begin{proof}
By choosing an appropriate basis of $K^2$, we can assume that $v$
represents the homothety class of $\cO^2$. The stabilizer of $v$ in
$\GL_2(K)$ is $Z(K)\cdot \GL_2(\cO)$. Since $\GL_2(\cO)$ is compact
in $\GL_2(K)$, whereas $\G$ is discrete, $\G_v=\G\cap \GL_2(\cO)$ is
finite. In particular, if $\gamma\in \G_v$, then $\gamma^n=1$ for
some $n\geq 1$. We claim that the order $n$ of $\gamma$ is coprime
to the characteristic $p$ of $F$. Indeed, if $p|n$ then
$(\gamma^{n/p}-1)\in D$ is non-zero but $(\gamma^{n/p}-1)^p=0$. This
is not possible since $D$ is a division algebra. Consider the
subfield $F(\gamma)$ of $D$ generated by $\gamma$ over $F$. By
Proposition \ref{prop-emb}, $[F(\gamma):F]=1$ or $2$. Since
$\gamma\in D$ is algebraic over $\F_q$, we conclude that
$[\F_q(\gamma):\F_q]=1$ or $2$.

It is obvious that $\F_q^\times\subset \G_v$. Assume there is
$\gamma\in \G_v$ which is not in $\F_q^\times$. From the previous
paragraph, $\gamma$ generates $\F_{q^2}$ over $\F_q$. Considering
$\gamma$ as an element of $\GL_2(\cO)$, we clearly have $a+b
\gamma\in \M_2(\cO)$ for $a, b\in \F_q$ (embedded diagonally into
$\GL_2(K)$). But if $a$ and $b$ are not both zero, then
$a+b\gamma\in \La$ is invertible, hence belongs to $\G$ and
$\GL_2(\cO)$. We conclude that
$\F_q(\gamma)^\times\cong\F_{q^2}^\times\subset \G_v$, and moreover,
every element of $\G_v$ is of order dividing $q^2-1$. In particular,
by Sylow theorem the order of $\G_v$ is coprime to $p$. Suppose
there is $\delta\in \G_v$ which is not in $\F_q(\gamma)^\times$.
Since $\delta$ is algebraic over $\F_q$, $\delta$ and $\gamma$ do
not commute in $D$ (otherwise $F(\gamma, \delta)$ is a subfield of
$D$ of degree $>2$ over $F$). Then $\G_v/\F_q^\times$ is a finite
subgroup of $\PGL_2(K)$ whose elements have orders dividing $q+1$
and which contains two non-commuting elements of order $(q+1)$. This
contradicts the classification of finite subgroups of $\PGL_2(K)$ of
order coprime to $p$, cf. \cite[p. 281]{SerreG}. Indeed, suppose $H$
is such a subgroup. If $p=2$ or $3$, then $H$ is either cyclic or
dihedral. In general, if $H$ is not cyclic or dihedral, then it must
be isomorphic to $A_4$, $A_5$ or $S_5$, so the elements of $H$ have
orders $1,2,3,4$ or $5$.

Now consider $\G_y$. Clearly $\F_q^\times\subset \G_y$. Let $v$ and
$w$ be the extremities of $y$. Note that there are natural
inclusions $\G_y\subset \G_v$, $\G_y\subset \G_w$ and $\G_y=\G_v\cap
\G_w$. If $\G_y$ is strictly larger than $\F_q^\times$, then from
the discussion about the stabilizers of vertices, we have
$\G_v=\G_w\cong \F_{q^2}^\times$ (an equality of subgroups of $\G$).
Therefore, $\G_y\cong \F_{q^2}^\times$. On the other hand, the
stabilizer of $y$ in $\GL_2(K)$ is isomorphic to $Z(K)\cdot \cI$,
where $\cI$ is the Iwahori
subgroup $\cI$ of $\GL_2(\cO)$ consisting of matrices $\begin{pmatrix} a & b \\
c & d
\end{pmatrix}$ such that $\ord_\infty(c)\geq 1$. Since $\cI$ does
not contain a subgroup isomorphic to $\F_{q^2}^\times$, we get a
contradiction.
\end{proof}

\begin{cor}\label{cor2.2} Let $v\in \Ver(\cT)$ be such that
$\G_v\cong \F_{q^2}^\times$. Then $\G_v$ acts transitively on the
edges with origin $v$.
\end{cor}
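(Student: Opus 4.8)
The plan is to reduce the statement to the elementary fact that $\F_{q^2}^\times$, acting on $\F_{q^2}$ by multiplication, permutes the $\F_q$-lines transitively, after identifying the edges with origin $v$ with a projective line over the residue field. First I would choose a basis of $K^2$ so that $v$ is the homothety class of $L:=\cO^2$, as in the proof of Proposition \ref{Prop2.1}. The stabilizer of $v$ in $\GL_2(K)$ is $Z(K)\cdot\GL_2(\cO)$, and since $\det(\gamma)=\Nr(\gamma)\in\F_q^\times\subseteq\cO^\times$ for every $\gamma\in\G$, we get $\G_v=\G\cap\GL_2(\cO)$. Moreover, as shown in the proof of Proposition \ref{Prop2.1}, under the hypothesis $\G_v\cong\F_{q^2}^\times$ we have $\G_v=\F_q(\gamma)^\times$ for a torsion element $\gamma\in\La$ with $\F_q(\gamma)\cong\F_{q^2}$; thus $E:=\F_q(\gamma)=\F_q[\gamma]$ is a subfield of $\M_2(\cO)$ acting $\F_q$-linearly on $L$ and preserving $\varpi L$.

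Next I would identify the edges with origin $v$ with the projective line over the residue field. By the definition of $\cT$, the vertices adjacent to $v=[L]$ are exactly the classes $[L']$ of lattices $L'$ with $\varpi L\subseteq L'\subseteq L$ and $L/L'\cong k$; since such an $L'$ is determined within its homothety class by these inclusions, $L'\mapsto L'/\varpi L$ gives a bijection between the neighbours of $v$ and the $k$-lines in $\bar V:=L/\varpi L\cong k^{\oplus 2}$. In a tree each neighbour is joined to $v$ by a unique edge with origin $v$, so this yields a bijection between $\{y\in\Ed(\cT)\mid o(y)=v\}$ and the set of lines in $\bar V$; it is visibly equivariant for $\G_v=\G\cap\GL_2(\cO)$, which acts on $\bar V$ through the reduction map $\GL_2(\cO)\to\GL_2(k)$. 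This bookkeeping — that neighbours of $v$ correspond bijectively and $\G_v$-equivariantly to lines of $\bar V$ — is the one point that must be checked with care, though it is routine; note incidentally that $\#\P^1(k)=q+1$ agrees with the degree of $v$, consistent with $\G_v$ of order $q^2-1$ and $\F_q^\times$ acting trivially.

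Finally, reduction modulo $\varpi$ gives an $\F_q$-algebra homomorphism $E\to\End_{\F_q}(\bar V)=\M_2(k)$; it is nonzero, hence injective because $E$ is a field, so $\bar V$ becomes a faithful $E$-module. Since $\dim_{\F_q}\bar V=2=[E:\F_q]$, the $E$-module $\bar V$ is free of rank $1$, i.e.\ $\bar V\cong E\cong\F_{q^2}$ with $\G_v=E^\times$ acting by multiplication. Then $\G_v$ acts simply transitively on $\bar V\setminus\{0\}$, hence transitively on the lines of $\bar V$, that is, transitively on the edges with origin $v$, which is the assertion.
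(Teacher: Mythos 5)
Your proof is correct, but it takes a genuinely different route from the paper's. The paper's argument is a two-line counting argument riding on Proposition \ref{Prop2.1}: since every edge stabilizer $\G_y$ is $\F_q^\times$, the group $\G_v/\F_q^\times$ of order $q+1$ acts freely on the $q+1$ edges issuing from $v$, and a free action of a group of order $n$ on an $n$-element set is necessarily transitive. You instead make the action explicit: you identify the star of $v$ with the set of $k$-lines in $L/\varpi L$ and show that reduction modulo $\varpi$ turns $L/\varpi L$ into a one-dimensional $\F_{q^2}$-vector space on which $\G_v\cong\F_{q^2}^\times$ acts by multiplication, hence simply transitively on nonzero vectors and therefore transitively on lines. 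What the paper's proof buys is brevity, given that the edge-stabilizer computation has already been carried out (via the Iwahori subgroup) in Proposition \ref{Prop2.1}. What yours buys is independence from that computation --- indeed it re-derives the fact that the stabilizer in $\G_v$ of an edge at $v$ is $\F_q^\times$, since the stabilizer of an $\F_q$-line in $\F_{q^2}$ under multiplication is exactly $\F_q^\times$ --- together with a concrete description of the action on the star. Both arguments are complete; the one point you rightly single out as needing care, that neighbours of $v$ biject $\G_v$-equivariantly with the lines of $L/\varpi L$, is the standard local description of the Bruhat--Tits tree and is checked exactly as you indicate.
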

\begin{proof}
By Proposition \ref{Prop2.1}, a subgroup of $\F_{q^2}^\times$ which
stabilizes an edge with origin $v$ is $\F_q^\times$. Hence
$\G_v/\F_q^\times$ acts freely on the set of such edges. Since this
quotient group has $q+1$ elements, which is also the number of edges
with origin $v$, it has to act transitively.
\end{proof}

\begin{cor}\label{cor5.4} The length of any edge $y\in \Ed(\G\bs \cT)$ is equal to
$1$. In particular, $M$ is a proper, flat and regular scheme over
$\Spec(\cO)$ with $\Gr(M)\cong \G\bs \cT$. The first Betti number
$h_1(\G\bs \cT)$ is equal to
$$
g(R):=1+\frac{1}{q^2-1}\prod_{x\in R}(q_x-1)-\frac{q}{q+1}\cdot
2^{\# R-1}\cdot \Odd(R).
$$
$($$M$ is not necessarily the minimal regular model of $X^\cD$ over
$\Spec(\cO)$; one obtains the minimal model by removing the terminal
vertices from $\G\bs\cT$.$)$
\end{cor}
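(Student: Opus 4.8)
I would argue as follows.

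\emph{Edge lengths and regularity.} The length claim is immediate from Proposition \ref{Prop2.1}: the scalars $\F_q^\times\subset\G$ act trivially on $\cT$, so the group acting effectively is the image $\overline{\G}\subset\PGL_2(K)$ of $\G$, and for $y\in\Ed(\G\bs\cT)=\Ed(\overline{\G}\bs\cT)$ the length is $\ell(y)=\#\overline{\G}_{\widetilde y}=\#(\G_{\widetilde y}/\F_q^\times)$ for a lift $\widetilde y$; by Proposition \ref{Prop2.1}, $\G_{\widetilde y}\cong\F_q^\times$, so $\ell(y)=1$. Next, $M$ is proper and flat over $\Spec(\cO)$ because $X^\cD$ is projective over $C$ (Theorem \ref{thmMC}), and its generic fibre $X^\cD\otimes_F K$ is a smooth projective curve. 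The uniformization (\ref{eq-unifinf}), together with Mumford's construction and Kurihara's results recalled in $\S$\ref{ssAC}, identifies $M$ with the admissible curve $X_{\overline{\G}}$ and gives $\Gr(M)\cong\G\bs\cT$ as graphs with lengths (Theorem \ref{thm-Kurihara}). By Definition \ref{def-AC} such a curve is regular away from the double points of its closed fibre, where it is regular exactly when the corresponding edge has length $1$; since all lengths are $1$, $M$ is regular.

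\emph{Betti number: reduction.} The graph $\G\bs\cT$ is finite (Lemma \ref{propFG}) and connected, so writing $n$ for its number of vertices and $m=\tfrac{1}{2}\#\Ed(\G\bs\cT)$ for its number of unoriented edges, $h_1(\G\bs\cT)=m-n+1$. By Proposition \ref{Prop2.1} each vertex $v$ has $\overline{\G}_v$ trivial or cyclic of order $q+1$, while every edge stabilizer in $\overline{\G}$ is trivial; let $N$ denote the number of vertices of the second type. I would then extract $m$ from the Serre--Bass formula for the Euler characteristic of a group acting on a tree (cf.\ \cite{SerreT}): since $\cT$ is contractible,
$$
\chi(\overline{\G})=\sum_{v}\frac{1}{\#\overline{\G}_v}-m=\Bigl(n-N+\frac{N}{q+1}\Bigr)-m,
$$
so eliminating $m$ gives $h_1(\G\bs\cT)=1-\chi(\overline{\G})-\tfrac{q}{q+1}N$. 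The proof then reduces to the two identities
$$
\chi(\overline{\G})=-\frac{1}{q^2-1}\prod_{x\in R}(q_x-1)\qquad\text{and}\qquad N=2^{\#R-1}\,\Odd(R).
$$

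\emph{Betti number: the two identities.} The first is the Euler--Poincar\'e (mass) formula for the unit group of the maximal order $\La\subset D$: since $D$ splits at $\infty$ and $\Pic(A)=1$, the order $\La$ has a single class, and the value is the product of the local contributions --- the rational-function-field factor $\zeta_F(-1)=\bigl((q-1)(q^2-1)\bigr)^{-1}$ and the ramified factors $(q_x-1)$ for $x\in R$. The second identity counts the $\G$-conjugacy classes of embeddings of the constant quadratic extension $\F_{q^2}$ into $\La$: by Proposition \ref{Prop2.1} and its proof, an elliptic vertex is the unique vertex fixed by the image of such an embedding, and conversely each such image fixes a unique vertex; by Proposition \ref{prop-emb} an embedding exists precisely when no place of $R$ splits in the constant-field extension $F\cdot\F_{q^2}$, i.e.\ when every place of $R$ has odd degree ($\Odd(R)=1$), and in that case the number of classes is $2^{\#R-1}$ by the class-number and optimal-embedding formulas for orders over function fields \cite{DvG}, just as in the count in Lemma \ref{lem3.4}. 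Substituting both identities yields $h_1(\G\bs\cT)=g(R)$. (Alternatively, by Theorem \ref{thm-Kurihara}, $h_1(\G\bs\cT)$ is the genus of $X^\cD\otimes_F K$, which is computed to be $g(R)$ in \cite{PapGenus}; this is the shortest route if one is willing to quote that computation.)

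\emph{The parenthetical remark and the main obstacle.} A terminal vertex of $\G\bs\cT$ is an irreducible component $\cong\P^1_k$ of $M_k$ meeting the rest of $M_k$ in a single node of thickness $1$, hence a $(-1)$-curve on the regular surface $M$; contracting all of these and iterating produces the minimal regular model, and $h_1$ is unchanged because pruning terminal vertices does not alter the first Betti number of a graph. The graph-theoretic bookkeeping and the reductions above are routine; the real content --- and the step I expect to be the main obstacle --- is the pair of quaternion-arithmetic identities above (the mass formula for $\chi(\overline{\G})$ and the optimal-embedding count for $N$), both standard over function fields but requiring the full machinery of orders and class numbers, and both avoidable by simply invoking the genus formula of \cite{PapGenus}.
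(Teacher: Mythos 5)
Your treatment of the edge lengths, the regularity of $M$, and the identification $\Gr(M)\cong\G\bs\cT$ coincides with the paper's: Proposition \ref{Prop2.1} gives $\G_{\tilde y}\cong\F_q^\times$, which is exactly the kernel of $\G\to\PGL_2(K)$, so $\ell(y)=1$ by the definition in $\S$\ref{ssAC}, and admissibility with all thicknesses $1$ gives regularity. For the Betti number, however, you take a genuinely different primary route. The paper simply invokes Theorem \ref{thm-Kurihara} ($h_1(\G\bs\cT)=$ genus of $X^\cD_K$) and quotes the genus formula of \cite[Thm.\ 5.4]{PapGenus} --- i.e.\ exactly the ``alternative'' you mention in passing. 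Your main argument instead recomputes $h_1$ intrinsically via the Bass--Serre Euler characteristic of $\overline{\G}$ acting on $\cT$, reducing to the mass-formula identity $\chi(\overline{\G})=-\frac{1}{q^2-1}\prod_{x\in R}(q_x-1)$ and the embedding count $N=2^{\#R-1}\Odd(R)$. This is a legitimate and more self-contained route (it is essentially how \cite{PapGenus} itself proceeds), and the count of $N$ is not circular, since it is exactly the $V_1$ computation carried out independently in the proof of Theorem \ref{PropTV} via Eichler's optimal-embedding theorem. What your route buys is independence from Theorem \ref{thm-Kurihara} and from \cite{PapGenus}; what it costs is that the two arithmetic identities must actually be proved, and your heuristic for the first one is off by a normalization: $\zeta_F(-1)\prod_{x\in R}(q_x-1)=\frac{1}{(q-1)(q^2-1)}\prod_{x\in R}(q_x-1)$, which differs from the required $\frac{1}{q^2-1}\prod_{x\in R}(q_x-1)$ by a factor of $(q-1)$ that must be accounted for by the precise choice of unit group ($\La^\times/\F_q^\times$ versus norm-one units). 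Since you state the correct target identity and defer its proof to the standard machinery, this is a looseness rather than an error, but in a complete write-up that factor needs to be tracked. Your handling of the parenthetical remark (terminal components are $(-1)$-curves, and pruning terminal vertices does not change $h_1$) is correct and is more than the paper itself says.
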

\begin{proof} Let $y \in \Ed(\G\bs \cT)$.
According to Proposition \ref{Prop2.1}, the image of $\G_y$ in
$\PGL_2(K)$ is trivial, so $\ell(y)=1$ by definition. Next, by
Theorem \ref{thm-Kurihara}, $h_1(\G\bs\cT)$ is equal to the genus of
$X^\cD_K$. A formula for the genus of $X^\cD_K$ is computed in
\cite[Thm. 5.4]{PapGenus} and it is given by $g(R)$, so $h_1(\G\bs
\cT)=g(R)$.
\end{proof}

\begin{thm}\label{PropTV}\hfill
\begin{enumerate}
\item The graph $\G\bs \cT$ has no loops.
\item Every vertex of $\G\bs \cT$ is either terminal or has degree
$q+1$.
\item Let $V_1$ and $V_{q+1}$ be the number of terminal and
degree $q+1$ vertices of $\G\bs \cT$, respectively. Then
$$
V_1=2^{\# R -1}\Odd(R)\quad \text{and}\quad
V_{q+1}=\frac{2}{q-1}\left(g(R)-1+2^{\# R -2}\Odd(R)\right).
$$
\end{enumerate}
\end{thm}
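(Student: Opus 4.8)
The plan is to analyze the quotient graph $\G\bs\cT$ using the structure theory of group actions on trees together with the arithmetic input already assembled in Proposition \ref{Prop2.1} and Corollary \ref{cor2.2}. For part (1), suppose $y\in\Ed(\G\bs\cT)$ is a loop, with preimage $\tilde y\in\Ed(\cT)$ having extremities $v,w$. A loop means there is $\gamma\in\G$ with $\gamma v=w$ (equivalently $\gamma$ maps one extremity of $\tilde y$ to the other without fixing $\tilde y$). Then $d(v,\gamma v)=1$, which is odd, contradicting Lemma \ref{lem1.6} since $\ord_\infty(\det\iota(\gamma))=\ord_\infty(\Nr(\gamma))=0$ is even. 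So $\G$ identifies no two adjacent vertices, hence no loops; this also re-proves that $\G$ acts without inversion.

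For part (2), fix $v\in\Ver(\cT)$ and let $\bar v$ be its image in $\G\bs\cT$. The edges of $\G\bs\cT$ with origin $\bar v$ are in bijection with the $\G_v$-orbits on the $q+1$ edges of $\cT$ with origin $v$ (standard for quotient graphs, \cite[\S I.5]{SerreT}). By Proposition \ref{Prop2.1}, $\G_v\cong\F_q^\times$ or $\F_{q^2}^\times$. If $\G_v\cong\F_q^\times$, it acts trivially on $\cT$ (its image in $\PGL_2(K)$ is trivial), so all $q+1$ edges remain distinct and $\bar v$ has degree $q+1$. If $\G_v\cong\F_{q^2}^\times$, then by Corollary \ref{cor2.2} it acts transitively on these $q+1$ edges, so there is exactly one orbit and $\bar v$ has degree $1$, i.e.\ is terminal. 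Thus every vertex of $\G\bs\cT$ is terminal or has degree $q+1$, and moreover $\bar v$ is terminal precisely when $\G_v\cong\F_{q^2}^\times$.

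For part (3), I count the two vertex types. The terminal vertices correspond exactly to $\G$-orbits of vertices $v\in\cT$ with $\G_v\cong\F_{q^2}^\times$; these are ``elliptic'' fixed points, and their number is governed by embeddings of $\F_{q^2}^\times$ (equivalently, of the quadratic constant-field extension) into $\La^\times$, which by a class-number/optimal-embedding count is $2^{\#R-1}\Odd(R)$. I expect this count to come either from the mass-formula / Eichler embedding-number machinery (as in the proof of Lemma \ref{lem3.4}, using \cite{DvG}) or by direct comparison with the superspecial-point count, giving $V_1=2^{\#R-1}\Odd(R)$. To get $V_{q+1}$, I use the Euler-characteristic identity for the finite graph: $h_1(\G\bs\cT)=1-\#\Ver+\#\Ed$, together with the edge count obtained by summing degrees. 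Since all edges have length $1$ (Corollary \ref{cor5.4}) and $\G$ acts without inversion, $2\cdot\#\Ed(\G\bs\cT)=\sum_{\bar v}\deg(\bar v)=V_1\cdot 1+V_{q+1}\cdot(q+1)$. Combining $\#\Ver=V_1+V_{q+1}$, $2\#\Ed=V_1+(q+1)V_{q+1}$, and $h_1(\G\bs\cT)=g(R)$ from Corollary \ref{cor5.4}, I solve the linear system:
\[
g(R)=1-(V_1+V_{q+1})+\tfrac12\bigl(V_1+(q+1)V_{q+1}\bigr)=1-\tfrac12 V_1+\tfrac{q-1}{2}V_{q+1},
\]
which yields $V_{q+1}=\frac{2}{q-1}\bigl(g(R)-1+\tfrac12 V_1\bigr)=\frac{2}{q-1}\bigl(g(R)-1+2^{\#R-2}\Odd(R)\bigr)$, matching the asserted formula.

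The main obstacle I anticipate is establishing the exact value $V_1=2^{\#R-1}\Odd(R)$: parts (1) and (2) are essentially formal once Proposition \ref{Prop2.1} and Corollary \ref{cor2.2} are in hand, and $V_{q+1}$ then follows by the Euler-characteristic bookkeeping above, but pinning down the number of $\G$-conjugacy classes of copies of $\F_{q^2}^\times$ in $\La^\times$ requires the optimal-embedding/class-number count for the order $\La$ — here the vanishing factor $\Odd(R)$ should arise because $\F_{q^2}$ embeds into $D$ only when no place of $R$ splits in the constant-field extension, i.e.\ when all places of $R$ have odd degree, and the power $2^{\#R-1}$ should come from the local embedding numbers at the ramified places divided by the global unit contribution, exactly as in the class-number formula invoked in Lemma \ref{lem3.4}.
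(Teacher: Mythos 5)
Parts (1) and (2) and your Euler-characteristic derivation of $V_{q+1}$ from $V_1$ match the paper's argument essentially verbatim; the identification of terminal vertices with the orbits of vertices whose stabilizer is $\F_{q^2}^\times$ is also the paper's route. The one place where you have a genuine gap is exactly the one you flag: the value $V_1=2^{\#R-1}\Odd(R)$ is asserted, with a plausible guess at where the factors come from, but not proved. Two nontrivial steps are missing. First, you need that the map from terminal vertices of $\G\bs\cT$ to $\G$-conjugacy classes of subgroups isomorphic to $\F_{q^2}^\times$ is a bijection; injectivity is not formal. The paper proves it by noting that if $v$ and $\gamma w$ are distinct vertices with $\G_v=\G_{\gamma w}\cong\F_{q^2}^\times$, then $\G_v$ fixes both endpoints of the geodesic joining them, hence fixes every edge on it, contradicting Proposition \ref{Prop2.1} which forces edge stabilizers to be $\F_q^\times$. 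Surjectivity uses that every finite subgroup of $\G$ fixes a vertex of $\cT$.

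Second, the actual count. The Eichler class-number machinery (Cor.\ 5.12--5.14 of \cite{Vigneras}) counts $\G$-conjugacy classes of \emph{elements} of $\La$ with a fixed reduced characteristic polynomial $f(x)$ whose root generates $\F_{q^2}$ over $\F_q$; this gives $h(\cA)\prod_{x\in R}\bigl(1-\bigl(\tfrac{L}{x}\bigr)\bigr)=2^{\#R}\Odd(R)$, with $L=\F_{q^2}F$ and $h(\cA)=1$ --- note the exponent is $\#R$, not $\#R-1$. To pass from element classes to subgroup classes one must show the induced map is exactly 2-to-1: the two roots $\la$ and $\la'$ of $f$ generate the same subgroup $\F_q(\la)^\times$, and the crux is that they are \emph{not} $\G$-conjugate. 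The paper proves this by showing that a conjugating element $\gamma$ would satisfy $\gamma^2=b\in\F_q^\times$ and anticommutation relations forcing $D\cong H(\xi,b)$ with $\xi,b\in\F_q^\times$, which is split by Chevalley--Warning, contradicting that $D$ is a division algebra. Your heuristic ("local embedding numbers divided by the global unit contribution") does not capture this division by $2$, which has a genuinely different source. Without these two steps the formula for $V_1$, and hence for $V_{q+1}$, is unproved.
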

\begin{proof} The graph $\G\bs \cT$ has no loops since adjacent
vertices of $\cT$ are not $\G$-equivalent, as follows from Lemma
\ref{lem1.6}.

Let $v\in \Ver(\cT)$. By Proposition \ref{Prop2.1}, $\G_v\cong
\F_q^\times$ or $\F_{q^2}^\times$. In the second case, by Corollary
\ref{cor2.2}, the image of $v$ in $\G\bs \cT$ is a terminal vertex.
Now assume $\G_v\cong \F_q^\times$. We claim that the image of $v$
in $\G\bs \cT$ has degree $q+1$. Let $e, y\in \Ed(\cT)$ be two
distinct edges with origin $v$. It is enough to show that $e$ is not
$\G$-equivalent to $y$ or $\bar{y}$. On the one hand, $e$ cannot be
$\G$-equivalent to $y$ since $\G_v\cong \F_q^\times$ stabilizes
every edge with origin $v$. On the other hand, if $e$ is
$\G$-equivalent to $\bar{y}$ then $v$ is $\G$-equivalent to an
adjacent vertex, and that cannot happen.

Using (2), the number of non-oriented edges $\{y, \bar{y}\}$ of
$\G\bs\cT$ is equal to
$$
E:=(V_1+(q+1)V_{q+1})/2.
$$
On the other hand, by Euler's formula $E+1=g(R)+V_1+V_{q+1}$, so to
prove (3) it is enough to prove the formula for $V_1$.

Let $S$ be the set of terminal vertices of $\G\bs \cT$. Let $G$ be
the set of conjugacy classes of subgroups of $\G$ isomorphic to
$\F_{q^2}^\times$. We claim that there is a bijection $\varphi:S\to
G$ given by $\tilde{v}\mapsto \G_v$, where $v$ is a preimage of the
terminal vertex $\tilde{v}\in S$. The map is well-defined since if
$w$ is another preimage of $\tilde{v}$ then $v=\gamma w$ for some
$\gamma\in \G$, and so $\G_w=\gamma^{-1} \G_v\gamma$ is a conjugate
of $\G_v$. If $\varphi$ is not injective, then there are two
vertices $v,w\in\Ver(\cT)$ such that $\G_v\cong\F_{q^2}^\times$,
$\G_w=\gamma^{-1}\G_v\gamma$ for some $\gamma\in \G$, but $v$ and
$w$ are not in the same $\G$-orbit. Then $\G_{\gamma
w}=\gamma\G_w\gamma^{-1}=\G_v$, but $\gamma w\neq v$. The geodesic
connecting $v$ to $\gamma w$ is fixed by $\G_v$, so every edge on
this geodesic has stabilizer equal to $\G_v$. This contradicts
Proposition \ref{Prop2.1}. Finally, to see that $\varphi$ is
surjective it is enough to show that every finite subgroup $\G'$ of
$\G$ fixes some vertex. Since $\G'$ is finite, the orbit $\G'\cdot
v$ is finite for any $v\in \Ver(\cT)$, so $\G'$ fixes some vertex by
\cite[Prop. 19, p. 36]{SerreT}.

Let $\cA:=\F_{q^2}[T]$ and $L:=\F_{q^2}F$ (note that $\cA$ is the
integral closure of $A$ in $L$). If $q$ is odd, let $\xi$ be a fixed
non-square in $\F_q$. If $q$ is even, let $\xi$ be a fixed element
of $\F_q$ such that $\Tr_{\F_q/\F_2}(\xi)=1$ (such $\xi$ always
exists, cf. \cite[Thm. 2.24]{LN}). Consider the polynomial
$f(x)=x^2-\xi$ if $q$ is odd, and $f(x)=x^2+x+\xi$ if $q$ is even.
Note that $f(x)$ is irreducible over $\F_q$; this is obvious for $q$
odd, and follows from \cite[Cor. 3.79]{LN} for $q$ even. Thus, a
solution of $f(x)=0$ generates $\F_{q^2}$ over $\F_q$. Denote by $P$
the set of $\G$-conjugacy classes of elements of $\La$ whose reduced
characteristic polynomial is $f(x)$. By a theorem of Eichler (Cor.
5.12, 5.13, 5.14 on pp. 94-96 of \cite{Vigneras}),
$$
\# P = h(\cA)\prod_{x\in R}\left(1-\left(\frac{L}{x}\right)\right),
$$
where $h(\cA)$ is the class number of $\cA$, and
$\left(\frac{L}{x}\right)$ is the \textit{Artin-Legendre symbol}:
$$
\left(\frac{L}{x}\right)=\left\{
                           \begin{array}{ll}
                             1, & \hbox{if $x$ splits in $L$;} \\
                             -1, & \hbox{if $x$ is inert in $L$;} \\
                             0, & \hbox{if $x$ ramifies in $L$.}
                           \end{array}
                         \right.
$$
Note that a place of even degree splits in $L$ and a place of odd
degree remains inert in $L$, so
$$
\prod_{x\in R}\left(1-\left(\frac{L}{x}\right)\right)= 2^{\#
R}\Odd(R).
$$
Since $h(\cA)=1$, we get $\# P = 2^{\# R}\Odd(R)$.

If $\la\in \La$ is an element with reduced characteristic polynomial
$f(x)$, then it is clear that $\la\in \G$ and
$\F_q(\la)^\times\subset \G$ is isomorphic to $\F_{q^2}^\times$.
Hence $\la\mapsto \F_q(\la)^\times$ defines a map $\chi:P\to G$. As
before, it is easy to check that $\chi$ is well-defined and
surjective. We will show that $\chi$ is 2-to-1, which implies the
formula for $V_1$. Obviously $\la\neq \la'$ and these elements
generate the same subgroup in $\G$, as the canonical involution on
$D$ restricted to $F(\la)$ is equal to the Galois conjugation on
$F(\la)/F$. Since $\la$ and $\la'$ are the only elements in
$\F_q(\la)$ with the given characteristic polynomial, it is enough
to show that $\la$ and $\la'$ are not $\G$-conjugate. Suppose there
is $\gamma\in \G$ such that $\la'=\gamma \la \gamma^{-1}$. One
easily checks that $1, \la, \gamma, \gamma\la$ are linearly
independent over $F$, hence generate $D$. If $q$ is odd, then
$\la'=-\la$. If $q$ is even, then $\la'=\la+1$. Using this, one
easily checks that $\gamma^2$ commutes with $\la$, e.g., for $q$
odd:
$$
\gamma^2 \la \gamma^{-2}=\gamma \la'\gamma^{-1}=-\gamma \la
\gamma^{-1}=-\la'=\la.
$$
Hence $\gamma^2$ lies in the center of $D$, and therefore,
$\gamma^2=b\in \F_q^\times$. Looking at the relations between $\la$
and $\gamma$, we see that $D$ is isomorphic to $H(\xi, b)$. We claim
that this last algebra is isomorphic to $\M_2(F)$, which leads to a
contradiction. Indeed, an easy consequence of Chevalley-Warning
theorem is that a quadratic form in $n\geq 3$ variables over a
finite field has a non-trivial zero. Thus, since $\xi, b\in
\F_q^\times$, the quadratic form associated to the reduced norm on
$H(\xi,b)$ has a non-trivial zero over the subfield $\F_q$ of $F$.
This obviously implies that $H(\xi,b)$ has zero divisors, hence
cannot be a division algebra.
\end{proof}

\begin{rem}
It is easy to check directly that the formulas giving
$h_1(\G\bs\cT)$ and $V_{q+1}$ assume non-negative integer values.
Interestingly, the presence of $\Odd(R)$ is necessary to make this
happen.
\end{rem}

The next theorem is the group-theoretic incarnation of Theorem
\ref{PropTV}.

\begin{thm}\label{thmGTI} Let $\G_\tor$ be the normal subgroup of $\G$ generated
by torsion elements.
\begin{enumerate}
\item $\G/\G_\tor$ is a free group on $g(R)$ generators.
\item If $\Odd(R)=0$, then $\G_\tor=\F_q^\times$.
\item If $\Odd(R)=1$, then the maximal finite order subgroups of $\G$
are isomorphic to $\F_{q^2}^\times$, and, up to conjugation, $\G$
has $2^{\# R-1}$ such subgroups.
\item $\G$ can be generated by $2^{\# R-1}+g(R)$ elements.
\end{enumerate}
\end{thm}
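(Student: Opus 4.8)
The plan is to deduce Theorem \ref{thmGTI} from the structure of the quotient graph $\G\bs\cT$ obtained in Theorem \ref{PropTV} by applying the theory of groups acting on trees (Bass--Serre theory, \cite[$\S$I.5]{SerreT}). Since $\G$ acts on the tree $\cT$ without inversion (Lemma \ref{lem1.6}), $\G$ is the fundamental group of a graph of groups with underlying graph $\G\bs\cT$, whose vertex and edge groups are the stabilizers described in Proposition \ref{Prop2.1}. Concretely, choosing a spanning tree of $\G\bs\cT$, we get a presentation in which $\G$ is built from the vertex groups $\G_v$ (each $\cong\F_q^\times$ or $\F_{q^2}^\times$) by amalgamation along the edge groups $\G_y$ (each $\cong\F_q^\times$) on the spanning tree, together with one stable letter for each edge of $\G\bs\cT$ not in the spanning tree.

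First I would pass to $\bg:=\G\bs\cT$. Every vertex of $\bg$ is either terminal with stabilizer $\F_{q^2}^\times$ or has degree $q+1$ with stabilizer $\F_q^\times$ (Theorem \ref{PropTV}(2) and its proof), while every edge group is $\F_q^\times$. The first key observation is that $\F_q^\times$ is central in $\G$: it is the image of $\F_q^\times\subset\La^\times$, which lies in the center $F$ of $D$. Hence in the graph-of-groups presentation every edge identification is the identity on the common central subgroup $\F_q^\times$, and all the stable letters commute with $\F_q^\times$. Therefore $\G_\tor$, the normal closure of the torsion, already contains $\F_q^\times$, and modulo $\G_\tor$ every vertex group dies: the amalgamated subgroups on the spanning tree all collapse to $1$, so the images of the vertex groups contribute nothing, and what survives is exactly the free group generated by the stable letters, one for each edge outside the spanning tree. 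The number of such edges is $h_1(\bg)=g(R)$ by Corollary \ref{cor5.4}. This proves (1). For (2): if $\Odd(R)=0$ then by Theorem \ref{PropTV}(3) there are no terminal vertices, so every vertex group is $\F_q^\times$; all torsion elements of $\G$ are then conjugate into $\F_q^\times$ (a finite subgroup of $\G$ fixes a vertex by \cite[Prop. 19, p. 36]{SerreT}, hence lies in some $\G_v\cong\F_q^\times$), and since $\F_q^\times$ is central its normal closure is itself, giving $\G_\tor=\F_q^\times$.

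For (3): by Proposition \ref{Prop2.1} the maximal finite subgroups of $\G$ are the vertex stabilizers isomorphic to $\F_{q^2}^\times$, and by the bijection $\varphi:S\to G$ established in the proof of Theorem \ref{PropTV} the conjugacy classes of such subgroups are in bijection with the terminal vertices of $\bg$, of which there are $V_1=2^{\#R-1}\Odd(R)=2^{\#R-1}$ when $\Odd(R)=1$. For (4): count generators from the graph-of-groups presentation. One needs a generator for each vertex group on the spanning tree — but the $\F_q^\times$-vertices contribute a generator that is already accounted for (it is the central $\F_q^\times$, a single generator overall), so the genuinely new vertex-group generators come only from the $V_1$ terminal vertices with group $\F_{q^2}^\times$, each cyclic hence one generator; adding the $g(R)$ stable letters and the one generator of $\F_q^\times$ gives at most $V_1+g(R)+1$ elements. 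To reach the sharper bound $2^{\#R-1}+g(R)$ one observes that when $\Odd(R)=1$ one has $V_1=2^{\#R-1}$ and the central $\F_q^\times$ is already contained in the cyclic group $\F_{q^2}^\times$ at a terminal vertex, so it costs no extra generator; when $\Odd(R)=0$ there are no terminal vertices and $\G$ is generated by $\F_q^\times$ together with $g(R)$ stable letters, i.e. by $1+g(R)\le 2^{\#R-1}+g(R)$ elements since $\#R\ge1$ (indeed $\#R$ is even and positive, as $D$ is a division algebra). The main obstacle is the bookkeeping in (4): making precise that the central $\F_q^\times$ never forces an extra generator beyond those already present in a vertex group, and checking the inequality in the $\Odd(R)=0$ case; the rest is a direct translation of Theorem \ref{PropTV} through Bass--Serre theory.
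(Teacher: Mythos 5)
Your proposal is correct and takes essentially the same route as the paper: both deduce the theorem from Bass--Serre theory applied to the action of $\G$ on $\cT$, using Proposition \ref{Prop2.1} and Theorem \ref{PropTV} to control the vertex and edge stabilizers and the shape of $\G\bs\cT$. The paper simply cites \cite[Cor.~1 and Thm.~13, p.~55]{SerreT} where you spell out the graph-of-groups presentation, the centrality of $\F_q^\times$, and the generator count in detail.
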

\begin{proof}
By \cite[Cor. 1, p. 55]{SerreT}, $\G/\G_\tor$ is the fundamental
group of the graph $\G\bs\cT$. The topological fundamental group of
any finite graph is a free group. Hence $\G/\G_\tor$ is a free
group. The number of generators of this group is equal to the number
of generators of the commutator group $(\G/\G_\tor)^\mathrm{ab}\cong
H_1(\G\bs\cT, \Q)$, which is a free abelian group on $g(R)$
generators. This proves (1). Parts (2) and (3) follow from the
proofs of Proposition \ref{Prop2.1} and Theorem \ref{PropTV}. Part
(4) is a consequence of (1)-(3), cf. \cite[Thm. 13, p. 55]{SerreT}.
\end{proof}

The main point of \cite[Ch. I]{SerreT} is that the knowledge of
$\G\bs\cT$ gives a presentation for $\G$. We apply this theory in
the case when $\G\bs \cT$ is itself a tree:

\begin{cor}\label{thm-tree}
$\G=\G_\tor$ if and only if one of the following holds:
\begin{enumerate}
\item $R=\{x,y\}$ and $\deg(x)=\deg(y)=1$. In this case, $\G$ has a
presentation
$$
\G\cong \langle \gamma_1,\gamma_2\ |\
\gamma_1^{q^2-1}=\gamma_2^{q^2-1}=1,\
\gamma_1^{q+1}=\gamma_2^{q+1}\rangle.
$$
\item $q=4$ and $R$ consists of the
four degree-$1$ places in $|C|-\infty$. In this case, $\G$ has a
presentation
$$
\G\cong \langle \gamma_1,\dots,\gamma_8\ |\ \gamma_1^{15}=\cdots
=\gamma_8^{15}=1,\ \gamma_1^{5}=\cdots=\gamma_8^{5}\rangle.
$$
\end{enumerate}
\end{cor}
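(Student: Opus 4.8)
The plan is to combine Corollary \ref{cor5.4} and Theorem \ref{PropTV} to pin down exactly when the finite graph $\G\bs\cT$ is a tree, and then, in each of those cases, to read off a presentation of $\G$ from its action on $\cT$ via Serre's theory of groups acting on trees. The first step: by Theorem \ref{thmGTI}(1), $\G=\G_\tor$ if and only if $g(R)=0$, i.e.\ if and only if the graph $\G\bs\cT$ is a tree. So I need to determine all $R$ (of even cardinality, not containing $\infty$) for which $g(R)=0$. Using the explicit formula
$$
g(R)=1+\frac{1}{q^2-1}\prod_{x\in R}(q_x-1)-\frac{q}{q+1}\cdot 2^{\#R-1}\cdot \Odd(R),
$$
I would first observe that if $\Odd(R)=0$ then $g(R)\geq 1$, so necessarily $\Odd(R)=1$, meaning every place in $R$ has odd degree; in particular every $q_x\geq q$. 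Writing $\#R=2n$ and setting $g(R)=0$, one gets $\prod_{x\in R}(q_x-1)=2^{2n-1}q(q-1)$. Since each factor $q_x-1$ is at least $q-1$ and divisible by $q-1$, crude size estimates force $n$ small and the degrees small; carrying out this elementary bounding (using $q_x-1\geq q^{\deg x}-1$ and that $\deg x$ is odd) should leave only: $R=\{x,y\}$ with $\deg x=\deg y=1$ (giving $(q-1)^2=2q(q-1)$, i.e.\ $q-1=2q$ — wait, this needs care) — I will need to recheck the arithmetic, but the upshot, matching the statement, is the two listed cases, the second requiring $q=4$ and $\#R=4$ with all degrees $1$. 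The main obstacle is precisely this Diophantine bounding: showing the product $\prod(q_x-1)$ cannot equal $2^{\#R-1}q(q-1)$ except in the two tabulated cases, and in particular ruling out large $\#R$ with all degree-one places, where one has $(q-1)^{\#R}=2^{\#R-1}q(q-1)$, i.e.\ $(q-1)^{\#R-2}=2^{\#R-1}q/(q-1)$ — this is a clean finite check in $q$ and $\#R$ once the number of degree-one places of $\P^1_{\F_q}$ (namely $q$) is taken into account, since one cannot have $\#R$ exceed the supply of available places.

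For the second step, given that $\G\bs\cT$ is a tree, I invoke the structure theorem \cite[Thm. 13, p. 55]{SerreT}: $\G$ is the fundamental group of a graph of groups over the tree $\G\bs\cT$, and since the underlying graph is a tree this fundamental group is simply the iterated amalgamated free product of the vertex groups along the edge groups. By Proposition \ref{Prop2.1}, every edge group is $\F_q^\times$, every terminal vertex group is $\F_{q^2}^\times$, and by Theorem \ref{PropTV}(2) all other vertex groups are also $\F_q^\times$ while the tree has $2^{\#R-1}$ terminal vertices. So $\G$ is built by amalgamating copies of $\F_{q^2}^\times$ (one per terminal vertex) and copies of $\F_q^\times$ (the interior vertices) over $\F_q^\times$. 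In case (1), $\G\bs\cT$ has two vertices joined by a single edge, both terminal, so $\G=\F_{q^2}^\times \ast_{\F_q^\times}\F_{q^2}^\times$; writing $\gamma_1,\gamma_2$ for generators of the two cyclic groups of order $q^2-1$ and noting $\F_q^\times$ is the unique subgroup of order $q-1$, hence generated by $\gamma_i^{q+1}$, this is exactly $\langle \gamma_1,\gamma_2\mid \gamma_1^{q^2-1}=\gamma_2^{q^2-1}=1,\ \gamma_1^{q+1}=\gamma_2^{q+1}\rangle$. In case (2), $q=4$, there are $2^{3}=8$ terminal vertices and $V_{q+1}=\tfrac{2}{q-1}(g(R)-1+2^{\#R-2}\Odd(R))=\tfrac{2}{3}(0-1+4)=2$ interior vertices of degree $5$; the tree is the "double star" with two central vertices each joined to four leaves and to each other. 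Its fundamental-group-of-graph-of-groups is the amalgam of eight copies of $\F_{16}^\times$ over the $\F_4^\times$'s, and since every edge group and every interior vertex group equals $\F_4^\times$ — which is generated by $\gamma_i^{5}$ inside any $\F_{16}^\times=\langle\gamma_i\rangle$ — all the interior-vertex and edge relations collapse to the single chain of equalities $\gamma_1^{5}=\cdots=\gamma_8^{5}$, yielding the stated presentation $\langle\gamma_1,\dots,\gamma_8\mid\gamma_1^{15}=\cdots=\gamma_8^{15}=1,\ \gamma_1^{5}=\cdots=\gamma_8^{5}\rangle$.

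I expect the genuinely delicate point to be the case-enumeration in step one — being careful that "all places of $R$ have odd degree" is used, that the count of degree-one points of $\P^1_{\F_q}$ is finite ($=q$), and that the product formula is handled without arithmetic slips (in particular correctly getting $q=4$, not some other value, in case (2) from $(q-1)^4=2^3\cdot q\cdot(q-1)$, i.e.\ $(q-1)^3=8q$, whose only prime-power solution is $q=4$ since $q=4$ gives $27=32$ — again I must recheck, the correct equation likely being $(q-1)^{\#R-1}/?=\dots$, and I will do this bookkeeping carefully). The collapsing of relations in step two is routine once one knows (Proposition \ref{Prop2.1}) that every edge group is the full $\F_q^\times$ and sits inside each adjacent vertex group as the unique subgroup of that order, so I will state it briefly. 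Finally I should double-check the degenerate subcase of (1) where a vertex is both terminal and adjacent only to the other — there are no interior vertices at all, consistent with $V_{q+1}=\tfrac{2}{q-1}(0-1+2^{\,0}\cdot 2^{\#R-1}/?) $, which I will verify gives $V_{q+1}=0$ when $\#R=2$, $\deg=1$, $\Odd(R)=1$.
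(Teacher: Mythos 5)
Your proposal follows the paper's own route exactly: reduce to $g(R)=0$ via Theorem \ref{thmGTI}, use Theorem \ref{PropTV} to see that $\G\bs\cT$ is a segment (case (1)) or the double star with $V_1=8$, $V_{q+1}=2$ (case (2)), and then read off the presentation as an amalgam of the vertex groups along the edge groups via \cite[I.4.4]{SerreT}; the collapsing of the interior-vertex relations in case (2) is handled correctly. The one place where your write-up is not yet a proof is the Diophantine enumeration, which you explicitly leave unresolved and for which your trial equations are wrong. After noting that $\Odd(R)=0$ forces $g(R)>1$, the condition $g(R)=0$ with $\Odd(R)=1$ is equivalent to
$$
\prod_{x\in R}(q_x-1)=(q-1)\bigl(2^{\# R-1}q-(q+1)\bigr),
$$
not to $\prod(q_x-1)=2^{\# R-1}q(q-1)$. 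Since each $q_x-1$ is divisible by $q-1$ (all degrees in $R$ being odd), the left side is divisible by $(q-1)^{\# R}$, so $(q-1)^{\# R-1}$ must divide $2^{\# R-1}q-(q+1)$. For $\# R=2$ the equation reads $\prod(q_x-1)=(q-1)^2$, forcing both degrees to be $1$ and holding for every $q$; for $\# R=4$ the divisibility condition is $(q-1)^3\mid 7q-1$, which (using $q-1\mid 6$) leaves only $q=4$, where $81=3\cdot 27$ checks and all four places must have degree $1$; and the same divisibility-plus-size argument rules out $\# R\geq 6$. With that identity in place your argument is complete and coincides with the paper's.
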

\begin{proof} By Theorem \ref{thmGTI},
$\G=\G_\tor$ if and only if $g(R)=0$. From the formula for $g(R)$
one easily concludes that $g(R)=0$ exactly in the two cases listed
in the theorem. In Case (1), according to Theorem \ref{PropTV},
$V_1=2$ and $V_{q+1}=0$, so $\G\bs \cT$ is a segment:
\begin{center}
\begin{picture}(30,10)
\put(5,5){\circle*{2}}\put(5,5){\line(1,0){20}}
\put(25,5){\circle*{2}}
\end{picture}
\end{center}
In Case (2), $V_1=8$ and $V_{5}=2$, so $\G\bs \cT$ is the tree:
\begin{center}
\begin{picture}(60,40)
\put(5,5){\circle*{2}}\put(5,5){\line(1,1){15}}
\put(5,15){\circle*{2}}\put(5,15){\line(3,1){15}}
\put(5,25){\circle*{2}}\put(5,25){\line(3,-1){15}}
\put(5,35){\circle*{2}}\put(5,35){\line(1,-1){15}}
\put(20,20){\circle*{2}}\put(20,20){\line(1,0){20}}
\put(40,20){\circle*{2}}
\put(55,5){\circle*{2}}\put(55,5){\line(-1,1){15}}
\put(55,15){\circle*{2}}\put(55,15){\line(-3,1){15}}
\put(55,25){\circle*{2}}\put(55,25){\line(-3,-1){15}}
\put(55,35){\circle*{2}}\put(55,35){\line(-1,-1){15}}
\end{picture}
\end{center}
By \cite[I.4.4]{SerreT}, $\G$ is the graph of groups $\G\bs \cT$,
where each terminal vertex of $\G\bs \cT$ is labeled by
$\F_{q^2}^\times$, each non-terminal vertex is labeled by
$\F_q^\times$, and each edge is labeled by $\F_q^\times$. In other
words, $\G$ is the amalgam of the groups labeling the vertices of
$\G\bs\cT$ along the subgroups labeling the edges. The presentation
for $\G$ follows from the definition of amalgam; see
\cite[I.1]{SerreT}.
\end{proof}

Theorem \ref{PropTV} allows to determine $\G\bs \cT$ in some other
cases, besides the case when $\G\bs \cT$ is a tree treated in
Corollary \ref{thm-tree}:

\begin{cor}\label{thm-hyp}
Suppose $R=\{x, y\}$ and $\{\deg(x), \deg(y)\}=\{1,2\}$. Then $\G\bs
\cT$ is the graph which has $2$ vertices and $q+1$ edges connecting
them:
\begin{center}
\begin{picture}(40,25)
\qbezier(5,13)(20,35)(35,13)\qbezier(5,13)(20,25)(35,13)
\qbezier(5,13)(20,20)(35,13) \qbezier(5,13)(20,-10)(35,13)
\put(5,13){\circle*{2}}\put(35,13){\circle*{2}}
\put(20,12){\circle*{.7}}\put(20,9){\circle*{.7}}\put(20,6){\circle*{.7}}
\end{picture}
\end{center}
\end{cor}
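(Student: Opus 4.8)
The plan is to apply the structural information from Theorem \ref{PropTV} to the case $R=\{x,y\}$ with $\{\deg(x),\deg(y)\}=\{1,2\}$ and read off the graph $\G\bs\cT$ from its numerical invariants together with Theorem \ref{thm-Kurihara}. First I would compute $g(R)$ from the formula in Corollary \ref{cor5.4}: here $\#R=2$, $\Odd(R)=0$ (since one place has even degree), so $g(R)=1+\frac{1}{q^2-1}(q_x-1)(q_y-1)$. With $\{q_x,q_y\}=\{q,q^2\}$ we get $(q_x-1)(q_y-1)=(q-1)(q^2-1)$, so $g(R)=1+(q-1)=q$. Next I would use Theorem \ref{PropTV}: since $\Odd(R)=0$, part (3) gives $V_1=0$, i.e.\ there are no terminal vertices, and by part (2) every vertex has degree $q+1$. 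The formula for $V_{q+1}$ then gives $V_{q+1}=\frac{2}{q-1}(g(R)-1)=\frac{2}{q-1}(q-1)=2$.

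So $\G\bs\cT$ is a graph with exactly $2$ vertices, each of degree $q+1$, and no loops (by part (1) of Theorem \ref{PropTV}). A loopless graph on two vertices in which both vertices have degree $q+1$ must consist solely of edges joining the two vertices; counting non-oriented edges, $E=(0+(q+1)\cdot 2)/2 = q+1$. Hence $\G\bs\cT$ is precisely the ``theta-type'' graph on two vertices joined by $q+1$ parallel edges, as claimed. One can double-check consistency via the first Betti number: $h_1(\G\bs\cT) = E - V + 1 = (q+1) - 2 + 1 = q = g(R)$, agreeing with Corollary \ref{cor5.4}. By Corollary \ref{cor5.4} every edge has length $1$, so there is no further decoration to record.

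I expect the only subtle point to be confirming that a loopless multigraph on two vertices with all vertices of degree $q+1$ is \emph{forced} to be the parallel-edge graph, rather than, say, admitting some extra components — but this is immediate once one knows $\G\bs\cT$ is connected (it is the dual graph of the connected scheme $M$, by Theorem \ref{thm-Kurihara} and Corollary \ref{cor5.4}) and has exactly the stated vertex and edge counts; the degree condition then pins down the incidence structure completely. The genuinely substantive inputs are all already in place: the genus formula, the vertex-degree dichotomy, and the counts $V_1, V_{q+1}$ from Theorem \ref{PropTV}. Thus the proof is essentially an arithmetic specialization of those results, and the ``hard part'' is merely bookkeeping the two cases $\deg(x)=1,\deg(y)=2$ versus $\deg(x)=2,\deg(y)=1$ — but these give the same value of $(q_x-1)(q_y-1)$ and hence the same graph, so there is really nothing extra to do.
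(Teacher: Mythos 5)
Your proof is correct and is essentially the paper's own argument: the paper likewise just reads off $h_1(\G\bs\cT)=q$, $V_1=0$, $V_{q+1}=2$ from Theorem \ref{PropTV} and Corollary \ref{cor5.4} and concludes immediately. Your extra bookkeeping (the edge count, the Betti-number consistency check, and the connectedness remark pinning down the incidence structure) only makes explicit what the paper leaves implicit.
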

\begin{proof}
From Theorem \ref{PropTV}, $h_1(\G\bs\cT)=q$, $V_1=0$ and
$V_{q+1}=2$. This implies the claim.
\end{proof}

The example in Corollary \ref{thm-hyp} is significant for arithmetic
reasons. Assume $q$ is odd. As is shown in \cite{PapHD}, the curve
$X^\cD$ is hyperelliptic if and only if $R=\{x, y\}$ and $\{\deg(x),
\deg(y)\}=\{1,2\}$. Thus, Corollaries \ref{thm-hyp} and \ref{cor5.4}
imply that the special fibre of the minimal regular model over
$\Spec(\cO)$ of a hyperelliptic $X^\cD$ consists of two projective
lines $\P^1_{\F_q}$ intersecting transversally at their
$\F_q$-rational points.

\begin{thm}\label{thmLPinf} $M(K)=\emptyset$ if and only if $\Odd(R)=0$.
If $L$ is a finite non-trivial extension of $K$, then $M(L)\neq
\emptyset$.
\end{thm}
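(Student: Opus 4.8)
The plan is to deduce everything from the structure of the special fibre $M_k$. First I would note that, by Theorem \ref{thm-Kurihara} and Corollary \ref{cor5.4}, $M$ is an admissible curve over $\Spec(\cO)$ in the sense of Definition \ref{def-AC}: it is proper, flat and regular, every irreducible component of $M_k$ is a copy of $\P^1_{\F_q}$, the double points of $M_k$ correspond to the edges of $\G\bs\cT$, and since every edge of $\G\bs\cT$ has length $1$, every double point has thickness $m_x=1$. Since $M\to\Spec(\cO)$ is proper, $M(K)=M(\cO)$, and $M(L)=M(\cO_L)$ for $\cO_L$ the ring of integers of a finite extension $L/K$; so in each case I only have to decide whether a point of $M_k$ lifts over the relevant complete discrete valuation ring. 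For this I would use two elementary observations. \emph{(a)} The smooth locus of $M\to\Spec(\cO)$ is exactly the complement of the double points of $M_k$; hence for the ring of integers $\cO'$ of any finite extension $K'/K$, with residue field $\kappa'$, every point of $M_k(\kappa')$ lying off the double points lifts to $M(\cO')$ by Hensel's lemma. \emph{(b)} If $\bar P$ is a double point of $M_k$, it is the intersection of two \emph{distinct} components $\P^1_{\F_q}$ (the two extremities of the corresponding edge of $\G\bs\cT$, distinct since $\G\bs\cT$ has no loops), so $\bar P$ is a split node and, as $m_x=1$, the completed local ring of $M$ at $\bar P$ is $\cO[[t,s]]/(ts-\pi)$; a local $\cO$-algebra homomorphism of this ring into $\cO'$ sends $t,s$ into the maximal ideal with $\ord(t)+\ord(s)=e(K'/K)$, so it exists iff $e(K'/K)\geq 2$ (take $t\mapsto\pi',\ s\mapsto\pi/\pi'$). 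In particular a thickness-$1$ double point lifts to an $\cO'$-point iff $K'/K$ is ramified, and never lifts over $\cO$ itself.

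Next I would feed in Theorem \ref{PropTV}: every vertex of $\G\bs\cT$ is terminal or of degree $q+1$, and $V_1=2^{\#R-1}\Odd(R)$. Let $v$ be a vertex of degree $q+1$ and $\tilde v$ a preimage in $\cT$; then $\G_{\tilde v}\cong\F_q^\times$ (otherwise $v$ would be terminal by Corollary \ref{cor2.2}), and this group, being scalar, acts trivially on $\cT$ and on $\widehat{\Omega}$. Hence the component of $M_k$ attached to $v$ is the isomorphic image of the $\P^1_{\F_q}$ attached to $\tilde v$ in the special fibre of $\widehat{\Omega}$, and the $q+1$ double points on it are the images of the $q+1$ edges of $\cT$ at $\tilde v$; these images are \emph{distinct} edges of $\G\bs\cT$ (two coinciding would make $v$ $\G$-equivalent to an adjacent vertex, impossible as $\G\bs\cT$ has no loops), and they are precisely the $\F_q$-rational points of this $\P^1_{\F_q}$, since the edges of $\cT$ at $\tilde v$ are canonically the $q+1$ lines in $(\cO/\pi)^2=\F_q^2$. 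So a degree-$(q+1)$ component carries \emph{no} smooth $\F_q$-rational point, whereas a terminal component meets the rest of $M_k$ in a single $\F_q$-rational double point and so carries $q$ smooth $\F_q$-points. The first assertion now follows: if $\Odd(R)=0$ then $V_1=0$, every component is of degree-$(q+1)$ type, $M_k$ has no smooth $\F_q$-rational point, and by \emph{(a)} and \emph{(b)} no point of $M_k$ lifts over $\cO$, so $M(K)=\emptyset$; if $\Odd(R)=1$ then $V_1=2^{\#R-1}\geq 1$, so $M_k$ has a terminal component, hence a smooth $\F_q$-point, which lifts by \emph{(a)} to give $M(K)=M(\cO)\neq\emptyset$.

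For the last assertion, let $L/K$ be finite and nontrivial, with ramification index $e$ and residue degree $f$; then $[L:K]=ef\geq 2$, and if $e=1$ then $f\geq 2$. If $e\geq 2$, I would take any double point of $M_k$ (at least one edge exists in $\G\bs\cT$) and lift it over $\cO_L$ using \emph{(b)}. If $e=1$, so $f\geq 2$, then any component $\P^1_{\F_q}$ has $q^f+1>q+1$ points over $k_L=\F_{q^f}$, hence a smooth $k_L$-point outside its at most $q+1$ double points, which lifts over $\cO_L$ by \emph{(a)}. Either way $M(L)\neq\emptyset$.

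The step I expect to be the main obstacle is the ``only if'' half of the first assertion: one must be sure that a degree-$(q+1)$ component has \emph{no} $\F_q$-rational point off its double points. This rests on the $\F_q$-rationality and pairwise distinctness of the $q+1$ intersection points on such a component, which I would extract from the explicit description of the special fibre of Mumford's model $\widehat{\Omega}$ (cf. \cite{vdPut}, \cite{Kurihara}) together with the proof of Theorem \ref{PropTV} — combined with the elementary fact \emph{(b)} that a thickness-$1$ node receives points only over ramified extensions, never over $\cO$ itself.
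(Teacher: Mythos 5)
Your proof is correct and follows essentially the same route as the paper: reduce everything to the dual graph $\G\bs\cT$, use Corollary \ref{cor5.4} (all lengths $1$) and Theorem \ref{PropTV} (every vertex terminal or of degree $q+1$, terminal vertices existing iff $\Odd(R)=1$), and then decide liftability of smooth points versus double points over $\cO_L$ according to $e$ and $f$. The only difference is presentational: where the paper cites Jordan--Livn\'e (Lem.~1.1, Thms.~5.1, 5.2 of \cite{JL}) for the criterion ``$M(L)\neq\emptyset$ iff $e>1$ or some vertex has degree $<q^f+1$,'' you re-derive it from the explicit local structure $\cO[[t,s]]/(ts-\pi)$ at the split nodes and Hensel's lemma at smooth points.
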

\begin{proof} Let $L$ be a finite extension of $K$ with relative degree
$f$ and ramification index $e$. Let $\cO_L$ be the ring of integers
of $L$. The argument which proves \cite[Thm. 4.5]{JL}, combined with
Corollary \ref{cor5.4}, shows that the dual graph of $M\times_\cO
\cO_L$ is isomorphic to $\G\bs\cT$ but with $\ell(y)=e$ for any
$y\in \Ed(\G\bs\cT)$. Moreover, the action induced by $\Fr_\infty$
on this graph is trivial. Using a geometric version of Hensel's
lemma \cite[Lem. 1.1]{JL}, one deduces from the previous statement
that $M(L)\neq \emptyset$ if and only if either $e>1$ or there is a
vertex $x\in \Ver(\G\bs\cT)$ whose degree $< q^f+1$; cf. the
argument in the proofs \cite[Thm. 5.1, 5.2]{JL}. But by Theorem
\ref{PropTV} every vertex of $\G\bs\cT$ has degree $1$ or $q+1$, and
there are vertices of degree $1$ if and only if $\Odd(R)=1$. Since
$q+1< q^f+1$ if $f>1$, the claim follows.
\end{proof}


\section{Explicit generators and equations}\label{SecExplU}
Let $B$ be an indefinite division quaternion algebra over $\Q$ of
discriminant $d$ and let $\La\subset B$ be a maximal $\Z$-order;
recall that $d>1$ is the product of primes where $B$ ramifies. Let
$\G^d=\{\gamma\in \La\ |\ \Nr(\gamma)=1\}$. Upon fixing an
identification of $B\otimes_\Q \R$ with $\M_2(\R)$, we can view the
group $\G^d$ as a discrete subgroup of $\SL_2(\R)$. Only for a few
$d$ the explicit matrices generating $\G^d$ as a subgroup of
$\SL_2(\R)$ are known, cf. \cite{AB} or \cite{KV}. For example,
$\G^6$ is isomorphic to the subgroup of $\SL_2(\R)$ generated by
\begin{align*}
\gamma_1=\frac{1}{2}\begin{pmatrix} \sqrt{2} & 2-\sqrt{2} \\
-6-3\sqrt{2} & -\sqrt{2}\end{pmatrix}&,\quad
\gamma_2=\frac{1}{2}\begin{pmatrix} \sqrt{2} & -2+\sqrt{2} \\
6+3\sqrt{2} & -\sqrt{2}\end{pmatrix}\\
\gamma_3=\frac{1}{2}\begin{pmatrix} 1 & 1 \\
-3 & 1\end{pmatrix}&,\quad
\gamma_4=\frac{1}{2}\begin{pmatrix} 1 & 3-2\sqrt{2} \\
-9-6\sqrt{2} & 1\end{pmatrix}
\end{align*}
(which have orders $4$, $4$, $6$, $6$, respectively).

Let $\cH$ be the Poincar\'e upper half-plane. The quotient
$X^d:=\G^d\bs\cH$ is a compact Riemann surface. From the work of
Shimura it is known that the algebraic curve $X^d$ has a canonical
model over $\Q$. The equations defining $X^d$ as a curve over $\Q$
again are known only for a small number of $d$; cf. \cite{Kurihara},
\cite{JL}. For example, $X^6$ as a curve in $\P^2_\Q$ is isomorphic
to the conic defined by $$ X^2+Y^2+3Z^2=0.$$

\vspace{0.1in}

In this section we keep the notation of $\S$\ref{SecGU}, but assume
$q$ is odd. We will find explicit generators for $\G$ and determine
the equation of $X^\cD$ in Case (1) of Corollary \ref{thm-tree}.
First, we explicitly describe $D$ and a maximal $A$-order in $D$:

\begin{lem}\label{lem6.1} Let $\xi\in \F_{q}$ be a fixed non-square.
If $\Odd(R)=1$, then $H(\xi,\fr)\cong D$. The free $A$-module $\La$
in $D$ generated by
$$
x_1=1,\quad x_2=i,\quad x_3=j,\quad x_4=ij
$$
is a maximal order.
\end{lem}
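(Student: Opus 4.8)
The plan is to verify two separate claims: that $H(\xi,\fr)$ is the quaternion algebra $D$ ramified exactly at $R$, and that the $A$-lattice $\La$ spanned by $1,i,j,ij$ is a maximal order. Since $D$ is characterized up to isomorphism by its ramification set $R$ (of even cardinality), it suffices for the first claim to compute the set of places $v\in|C|$ at which $H(\xi,\fr)$ ramifies and check that it equals $R$. This is a Hilbert-symbol computation. The relevant tame symbols are governed by $\ord_v(\xi)$ and $\ord_v(\fr)$: since $\xi\in\F_q^\times$ is a unit everywhere, at a place $v\nmid\fr$ (and $v\neq\infty$) the local algebra $H(\xi,\fr)_v$ is split because both entries are units and the residue field is finite (a quaternion algebra over a local field with residue characteristic not dividing anything relevant, both slots units, is split — equivalently $(\xi,\fr)_v=1$ for $v$ of good reduction). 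At a place $x\mid\fr$, $\fr$ has odd valuation exactly when $\deg(x)$ is odd... but one must be careful: $\fr=\prod_{x\in R, x\neq\infty}\wp_x$, so $\ord_x(\fr)=1$ for each finite $x\in R$, and the local symbol $(\xi,\fr)_x=(\xi,\wp_x)_x$ which equals $-1$ iff $\xi$ is a non-square in the residue field $\F_x$. Here is where $\Odd(R)=1$ enters: all places of $R$ have odd degree, so $\F_x=\F_{q^{\deg x}}$ is an odd-degree extension of $\F_q$, hence $\xi$ (a non-square in $\F_q$) remains a non-square in $\F_x$; thus $H(\xi,\fr)$ ramifies at every finite $x\in R$. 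At $\infty$: $\ord_\infty(\fr)=-\deg(\fr)=-\sum_{x\in R, x\neq\infty}\deg(x)$, and I would split into the cases $\infty\in R$ or $\infty\notin R$, in each computing $(\xi,\fr)_\infty$ via the parity of $\deg(\fr)$ and whether $\xi$ is a square in $\F_\infty\cong\F_q$. The product formula $\prod_v(\xi,\fr)_v=1$ gives a consistency check and pins down the behavior at $\infty$ once the finite places are known. The upshot: $H(\xi,\fr)$ ramifies exactly at $R$, so $H(\xi,\fr)\cong D$.

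For the maximality of $\La$, the plan is to use the discriminant criterion: an $A$-order $\mathcal{O}$ in $D$ is maximal iff its reduced discriminant (as an ideal of $A$) equals the reduced discriminant of $D$, which is $\fr$ (up to the behavior at $\infty$, which is outside $\Spec A$; maximality as an $A$-order only constrains the finite places $|C|-\infty$). The reduced discriminant of the free module $\La=A x_1\oplus\cdots\oplus A x_4$ is the square root of the ideal generated by $\det(\Tr(x_a x_b'))_{a,b}$, or equivalently one can use $\det(\Tr(x_a x_b))$. So I would compute the $4\times 4$ Gram matrix of the reduced trace pairing on the basis $1,i,j,ij$ using the relations $i^2=\xi$, $j^2=\fr$, $ij=-ji$ (recall $q$ odd). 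One finds $\Tr(1)=2$, $\Tr(i)=\Tr(j)=\Tr(ij)=0$, $\Tr(i^2)=2\xi$, $\Tr(j^2)=2\fr$, $\Tr((ij)^2)=\Tr(-\xi\fr)=-2\xi\fr$, and the off-diagonal entries vanish by the anticommutation and purity, so the matrix is diagonal $\mathrm{diag}(2,2\xi,2\fr,-2\xi\fr)$ with determinant $-16\,\xi^2\fr^2$. Since $q$ is odd, $2$ and $\xi$ are units in $A$, so the discriminant ideal is $(\fr^2)$, giving reduced discriminant $(\fr)$ — exactly that of $D$. Hence $\La$ is a maximal $A$-order.

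The main obstacle I anticipate is the bookkeeping at the place $\infty$ in the first part: one has to correctly track $\ord_\infty(\fr)$ (which is negative, equal to $-\deg\fr$) and interpret the Hilbert symbol $(\xi,\fr)_\infty$ in the local field $F_\infty$, and reconcile it with the hypothesis that $D$ is split at $\infty$ (which the paper assumes throughout $\S\ref{ssDES}$). Concretely, since $D$ is split at $\infty$ by hypothesis, $\infty\notin R$, so $\Odd(R)=1$ forces every place of $R$ to be finite of odd degree, hence $\deg\fr$ is a sum of odd numbers; the parity of $\deg\fr$ equals the parity of $\#R$, which is even, so $\deg\fr$ is even, $\ord_\infty(\fr)$ is even, and $(\xi,\fr)_\infty=(\xi,\text{unit})_\infty=1$ — consistent with $D$ split at $\infty$. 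This parity juggling is the only genuinely delicate point; the discriminant computation in the second part is routine once the Gram matrix is written down. I would also remark that the freeness of $\La$ as an $A$-module is automatic since $A=\F_q[T]$ is a PID, so no extra work is needed there.
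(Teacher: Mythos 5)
Your proposal is correct and follows essentially the same route as the paper: the first claim is settled by the tame Hilbert-symbol computation $(\xi,\fr)_v=-1$ iff $v\in R$ (using that $\xi$ stays a non-square in odd-degree residue extensions and that $\ord_\infty(\fr)=-\deg(\fr)$ is even because $\#R$ is even with $\Odd(R)=1$), and the second by computing the trace-form Gram matrix $\mathrm{diag}(2,2\xi,2\fr,-2\xi\fr)$ with determinant $-16\xi^2\fr^2$, so the discriminant ideal is $(\fr)^2$ and $\La$ is maximal. The only cosmetic difference is your explicit appeal to the product formula at $\infty$, which the paper replaces by the direct parity observation.
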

\begin{proof} To prove $H(\xi,\fr)\cong D$,
it is enough to show that the Hilbert symbol $(\xi,\fr)_v$ is $-1$
if and only if $v\in R$, cf. \cite[p. 32]{Vigneras}. By \cite[p.
217]{SerreLF}, $(\xi,\fr)_v=1$ if and only if $\xi^{\ord_v(\fr)}$ is
a square in $\F_v$. Now $\ord_v(\fr)=0$ if $v\in |C|-R-\infty$,
$\ord_v(\fr)=1$ if $v\in R$, $\ord_\infty(\fr)$ is even since $\# R$
is even and $\Odd(R)=1$. It remains to observe that $\xi$ is not a
square in $\F_v$ for $v\in R$ since $\xi$ is not a square in $\F_q$
and $\deg(v)$ is odd by assumption.

Next, it is obvious that $\La$ is an order. To show that it is
maximal, we compute its discriminant, i.e., the ideal of $A$
generated by $\det(\Tr(x_ix_j))_{ij}$:
$$
\det(\Tr(x_ix_j))_{ij}=\det \begin{pmatrix} 2 & 0 & 0 & 0\\
0 & 2\xi & 0 & 0\\
0 & 0 & 2\fr & 0\\
0 & 0 & 0 & -2\xi \fr
\end{pmatrix}=-16\xi^2\fr^2.
$$
Since $q$ is odd and $\xi\in \F_q^\times$, the ideal generated by
$-16\xi^2\fr^2$ is $(\prod_{x\in R}\fp_x)^2$. This implies that
$\La$ is maximal, cf. \cite[pp. 84-85]{Vigneras}.
\end{proof}

From Lemma \ref{lem6.1} we see that finding the elements
$\la=a+bi+cj+dij\in D$ which lie in $\G$ is equivalent to finding
$a,b,c,d,\in A$ such that
\begin{equation}\label{eq-GNr}
(a^2-\xi b^2)-\fr(c^2-\xi d^2) \in \F_q^\times.
\end{equation}
We are particularly interested in torsion elements of $\G$, so
instead of looking for general units, we will try to find elements
in $\La$ which are algebraic over $\F_q$ (such elements
automatically lie in $\La^\times$). Consider the equation
$\gamma^2=\xi$ in $\La$. If we write $\gamma=a+bi+cj+dij$, then
$$
\gamma^2= a^2+b^2\xi +c^2\fr-d^2\xi\fr+2(abi+acj+adij).
$$
Therefore, $\gamma^2=\xi$ is equivalent to
\begin{equation}\label{eq-tuodd}
a=0\quad \text{and} \quad b^2\xi  +c^2\fr-d^2\xi\fr=\xi.
\end{equation}
A possible solution is $b=1$, $d=c=0$. This gives the obvious
$\theta_1=i$ as a torsion unit. Now assume $R=\{x,y\}$ with
$\deg(x)=\deg(y)=1$. Then $\fr=(T-\alpha_1)(T-\alpha_2)$, where
$\alpha_1,\alpha_2\in \F_q$ and $\alpha_1\neq \alpha_2$. For this
$\fr$
$$
b_0=\frac{2}{\alpha_1-\alpha_2}
T-\frac{\alpha_1+\alpha_2}{\alpha_1-\alpha_2},\quad c_0=0, \quad
d_0=-\frac{2}{\alpha_1-\alpha_2}
$$
satisfy (\ref{eq-tuodd}), so $\theta_2=i(b_0+d_0j)$ is a torsion
unit.

Next, we study the action of $\theta_1, \theta_2$ on $\cT$. Since
$\fr$ has even degree and is monic, $\sqrt{\fr}\in K$. The map
\begin{equation}\label{eq-emb}
i\mapsto \begin{pmatrix} 0 & 1\\ \xi & 0 \end{pmatrix}, \quad
j\mapsto
\begin{pmatrix} \sqrt{\fr} & 0\\ 0 & -\sqrt{\fr}\end{pmatrix},
\end{equation}
defines an embedding of $D$ into $\M_2(K)$. The $1/T$-adic expansion
of $\sqrt{\fr}$ in $K$ starts with
$$
\sqrt{\fr}=T-\frac{\alpha_1+\alpha_2}{2}-\frac{(\alpha_1-\alpha_2)^2}{8T}+\cdots
$$
Therefore,
$$
\pi:=b_0+d_0\sqrt{\fr}=\frac{\alpha_1-\alpha_2}{4T}+\cdots
$$
has valuation $\ord_\infty(\pi)=1$. To simplify the notation in our
calculations we take $\pi$ as a uniformizer of $\cO$. Note that
$\pi^{-1}=b_0-d_0\sqrt{\fr}$, so $\theta_2$ under the embedding
(\ref{eq-emb}) maps to the matrix
$$
\theta_2=\begin{pmatrix} 0 & 1\\ \xi & 0
\end{pmatrix} \begin{pmatrix} \pi  & 0 \\ 0 & \pi^{-1}
\end{pmatrix}=\begin{pmatrix} 0 & \pi^{-1}\\ \xi\pi & 0
\end{pmatrix}.
$$
Let $e_1=\begin{pmatrix} 1\\ 0\end{pmatrix}$ and
$e_2=\begin{pmatrix} 0\\ 1\end{pmatrix}$ be the standard basis of
$K^2$. Let $v, w\in \Ver(\cT)$ be the vertices corresponding to the
homothety classes of lattices $\cO e_1 + \cO e_2$ and $\cO e_1 + \pi
\cO e_2$, respectively. Note that $v$ and $w$ are adjacent in $\cT$.
Now $\theta_1 e_1 = \xi e_2$ and $\theta_1 e_2=e_1$, so
$$
\theta_1 \cdot v = [\xi\cO e_2 + \cO e_1] = [\cO e_1+\cO e_2 ]=v.
$$
Similarly, $\theta_2 e_1=\xi \pi e_2$ and $\theta_2 e_2=e_1/\pi$, so
$$
\theta_2\cdot w=[\xi \pi \cO e_2 + \cO e_1]=[\cO e_1 + \pi\cO
e_2]=w.
$$
Thus, we found two adjacent vertices in $\cT$ and elements in their
stabilizers which are not in $\F_q^\times$. Proposition
\ref{Prop2.1} implies that $\G_v\cong\F_{q^2}^\times$ and
$\G_w\cong\F_{q^2}^\times$. By Corollary \ref{cor2.2}, the images of
$v$ and $w$ in $\G\bs \cT$ are adjacent terminal vertices. This
implies that $\G\bs\cT$ is an edge, and proves Theorem \ref{PropTV}
in the case when $R=\{x, y\}$ and $\deg(x)=\deg(y)=1$. By fixing
generators $\gamma_1, \gamma_2$ of the finite cyclic groups
$\F_q(\theta_1)^\times=\langle\gamma_1\rangle$ and
$\F_q(\theta_2)^\times=\langle\gamma_2\rangle$, one obtains two
torsion elements which generate $\G$.

\begin{example}
Let $q=3$, $\alpha_1=1$, $\alpha_2=0$. Then $\fr=T(T-1)$, $\xi=-1$.
Since $\gamma_i=1-\theta_i$ generates $\F_q(\theta_i)^\times$, $\G$
is isomorphic to the subgroup of $\GL_2(K)$ generated by the
matrices
$$
\gamma_1=\begin{pmatrix} 1 & -1 \\ 1 & 1\end{pmatrix}\quad
\text{and} \quad \gamma_2=\begin{pmatrix} 1 & (T+1)-\sqrt{\fr} \\
-(T+1)-\sqrt{\fr} & 1\end{pmatrix}
$$
both of which have order $8$ and satisfy $\gamma_1^4=\gamma_2^4=-1$.
\end{example}

\begin{thm}\label{thmEquations}
Assume $R=\{x, y\}$ and $\deg(x)=\deg(y)=1$. Then $X^\cD_F$, as a
curve over $F$, is isomorphic to the conic in $\P^2_F$ defined by
the equation
$$
X^2-\xi Y^2 -\fr Z^2 =0.
$$
\end{thm}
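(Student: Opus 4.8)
The plan is to prove the isomorphism by comparing $X^\cD_F$ with the Severi--Brauer conic of $D$, using the three local theorems of the paper to pin down its set of places carrying no local point. Since $R=\{x,y\}$ with both places of degree $1$, Corollary~\ref{cor5.4} together with Corollary~\ref{thm-tree} gives $g(R)=0$; combined with Theorem~\ref{thmMC} this shows that $X^\cD_F$ is a smooth, projective, geometrically connected curve of genus $0$ over $F$, so by Riemann--Roch its anticanonical linear system embeds it as a smooth conic $\mathcal{C}\subset\P^2_F$. Now I would invoke two standard facts: a smooth conic over $F$ is the Severi--Brauer variety of a uniquely determined quaternion algebra over $F$, and it has an $F_v$-rational point precisely when that algebra splits at $v$; and, by the classification recalled in \S\ref{Sec3}, a quaternion algebra over $F$ is determined by its (finite, even) set of ramified places. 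Hence two smooth conics over $F$ are isomorphic if and only if they have the same set of places carrying no local point, and the theorem reduces to showing that $X^\cD(F_v)=\emptyset$ exactly for $v\in R$.

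For the target conic $\mathcal{C}_0\colon X^2-\xi Y^2-\fr Z^2=0$ this is immediate. Its ternary form $\langle 1,-\xi,-\fr\rangle$ is the relevant subform of the reduced norm form of $H(\xi,\fr)$, so $\mathcal{C}_0(F_v)\neq\emptyset$ if and only if $H(\xi,\fr)$ splits at $v$; by Lemma~\ref{lem6.1}, $H(\xi,\fr)\cong D$, which by construction splits exactly at the places outside $R$. Thus $\mathcal{C}_0$ has no local point precisely at the places of $R$.

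For $X^\cD_F$ itself I would argue place by place, along the lines dictated by the three local theorems. If $v\in R$, then $X^\cD(F_v)=\emptyset$ by Theorem~\ref{thm3.1}(3) (taken with $K=F_v$, so $e=f=1$ are both odd); this is the remark following that theorem. If $v=\infty$, then $\Odd(R)=1$ since every place of $R$ has odd degree, so $X^\cD(F_\infty)\neq\emptyset$ by Theorem~\ref{thmLPinf}. Finally, if $v\in|C|-R-\infty$, then $X^\cD$ has good reduction at $v$ by Theorem~\ref{thmMC}, so by the geometric form of Hensel's lemma already used in the proof of Theorem~\ref{thm2.1} it suffices to produce an $\F_v$-rational point on the special fibre; but the special fibre is a smooth, projective, geometrically connected curve over the finite field $\F_v$ of genus $g(R)=0$, hence a conic over $\F_v$, which has a rational point by the Chevalley--Warning theorem. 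Therefore $X^\cD(F_v)=\emptyset$ precisely when $v\in R$, and comparing with the previous paragraph, $X^\cD_F$ and $\mathcal{C}_0$ are smooth conics over $F$ with identical sets of places of bad reduction, hence are isomorphic over $F$.

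The step that needs the most care is the reduction to ramification sets: one must check that $X^\cD_F$ is genuinely a smooth, geometrically connected genus-$0$ curve (so that it really is a conic and the quaternion-algebra dictionary applies) and that the genus is preserved under specialization at the good places. Both follow from Theorem~\ref{thmMC} and Corollary~\ref{cor5.4}, so I do not expect a genuine obstacle here; the substantive input to the argument is entirely supplied by the local Theorems~\ref{thm2.1}, \ref{thm3.1} and~\ref{thmLPinf}.
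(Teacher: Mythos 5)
Your proof is correct and follows essentially the same route as the paper: both arguments reduce the theorem to showing that $X^\cD_F$ and the target conic fail to have $F_v$-points at exactly the places of $R$, using Theorem \ref{thmMC} (good reduction plus genus $0$ over $\F_v$), Theorem \ref{thm3.1}(3) and Theorem \ref{thmLPinf} for $X^\cD$, and the Hilbert-symbol computation of Lemma \ref{lem6.1} for the conic $X^2-\xi Y^2-\fr Z^2=0$. The only difference is cosmetic: you justify the ``two conics over $F$ are isomorphic iff they have the same set of places without local points'' step via the Severi--Brauer/quaternion-algebra dictionary, whereas the paper derives it from Hasse invariants of ternary quadratic forms and the Hasse--Minkowski theorem.
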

\begin{proof} Let $\cC$ be a smooth conic in $\P^2_F$, and
let $Q(X,Y,Z)$ be a homogeneous quadratic equation defining $\cC$.
Since the characteristic of $F$ is odd by assumption, by the theory
of quadratic forms \cite[$\S$IV.1]{SerreCA} we can assume that $Q$
is diagonal and non-degenerate
\begin{equation}\label{eq-conic}
Q(X,Y,Z)=aX^2+bY^2+cZ^2, \quad a,b,c\in F^\times.
\end{equation}
Let $d:=abc\neq 0$ be the discriminant of $Q$. The conic $\cC$
obviously does not change if we replace $Q$ with $\alpha\cdot Q$ for
any $\alpha\in F^\times$. Next, $Q(X,Y,Z)$ and $Q(\alpha X, Y, Z)$
define isomorphic conics. Since $Q(dX,Y,Z)/d$ has discriminant $1$,
we can assume that $Q$ is given by (\ref{eq-conic}) and has
discriminant $1$. Let $\eps_v(Q)=\pm 1$ be the \textit{Hasse
invariant} of $Q$ at $v\in |C|$. The argument in the proof of
\cite[Thm. 6, p. 36]{SerreCA} shows that $Q$ has a non-trivial zero
over $F_v$ if and only if $\eps_v(Q)=(-1,-d)_v$. On the other hand,
$Q$ having non-trivial zeros over $F_v$ is equivalent to
$\cC(F_v)\neq \emptyset$. By the Hasse-Minkowski Theorem \cite[p.
189]{OMeara}, a non-degenerate quadratic form over $F$, up to a
linear isomorphism, is uniquely determined by the number of
variables, the discriminant and the local Hasse invariants for all
places of $F$. We conclude that two conics $\cC$ and $\cC'$ in
$\P^2_F$ are isomorphic over $F$ if and only if
$$\cC(F_v)\neq \emptyset \Longleftrightarrow \cC'(F_v)\neq \emptyset
\quad\text{ for all } v\in |C|.$$

The projective curve $X^\cD_F$ is defined over $F$ and is smooth,
cf. Theorem \ref{thmMC}. The existence of the uniformization
(\ref{eq-unifinf}) implies that $X^\cD_F$ is geometrically
connected. The genus of $X^\cD_F$ is $g(R)$, so if $R=\{x, y\}$ and
$\deg(x)=\deg(y)=1$, then $X^\cD_F$ has genus $0$, i.e., is a conic.
If $v\in |C|-R-\infty$, then by Theorem \ref{thmMC}, $X^\cD$ has
good reduction at $v$. A curve of genus zero over a finite field
always has rational points, so $X^\cD(F_v) \supset X^\cD(\F_v)\neq
\emptyset$ for $v \in |C|-R-\infty$. The same conclusion for
$v=\infty$ follows from Theorem \ref{thmLPinf}, as $\Odd(R)=1$.
Finally, by Theorem \ref{thm3.1}, $X^\cD(F_v)=\emptyset$ for $v\in
R$.

Now to prove the theorem it is enough to show that the conic defined
by $X^2-\xi Y^2 - \fr Z^2$ has $F_v$-rational points exactly for
$v\in |C|-R$. By the definition of the Hilbert symbol, $X^2 -\xi Y^2
-\fr Z^2$ has a non-trivial zero over $F_v$ if and only if $(\xi,
\fr)_v=1$. Thus, we need to show that $(\xi, \fr)_v=-1$ if and only
if $v \in R$. This calculation was already carried out in the proof
of Lemma \ref{lem6.1}.
\end{proof}


\section{On the Hasse principle}\label{SHasse} In this section $X$ is a smooth, projective,
geometrically irreducible curve over a field $K$.

\begin{defn} If $K$ is a global field,
denote the set of places of $K$ by $|K|$. \textit{$X$ violates the
Hasse principle} if $X(K_v)\neq \emptyset$ for all $v\in |K|$, but
$X(K)=\emptyset$.
\end{defn}

The study of varieties over global fields which violate the Hasse
principle is an active research area in Number Theory, cf.
\cite{Skorobogatov}. In this section we show that there exist
quadratic extensions of $F$ over which $X^\cD$ violates the Hasse
principle if $\deg(\fr)\geq 20$.

\begin{defn} The \textit{$K$-gonality} of $X$, denoted $\delta_K(X)$,
is the least positive integer $n$ for which there exists a degree
$n$ morphism $\pi:X\to \P^1_K$ defined over $K$.
\end{defn}

\begin{thm}\label{HPthm2}
Let $K$ be a finite field extension of $F$. Assume that the Jacobian
of $X$ has no isotrivial quotients. If $X$ has infinitely many
points of degree $d$ over $K$, then there exists a finite, purely
inseparable extension $\tilde{K}$ of $K$ such that
$\delta_{\tilde{K}}(X)\leq 2d$.
\end{thm}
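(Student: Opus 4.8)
The plan is to carry out, over the function field $K$, the argument of Frey (cf.\ Abramovich--Harris) that turns many low-degree points into a pencil of small degree, with Faltings' theorem replaced by its positive-characteristic analogue. A point of degree $d$ on $X$ is the same as an effective $K$-rational divisor of degree $d$, hence a $K$-point of the $d$-th symmetric power $X^{(d)}$ (smooth projective of dimension $d$), so the hypothesis says $X^{(d)}(K)$ is infinite. Fix one such divisor $D_\ast$ as a base point and consider the Abel--Jacobi morphism $u\colon X^{(d)}\to \Pic^d_X$, $D\mapsto[D]$, whose fibre over a class $\ell$ is the complete linear system $|\ell|$ and whose image is a subvariety $W_d\subseteq\Pic^d_X$. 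If some fibre of $u$ contains infinitely many $K$-points, that fibre is a projective space $\P^r_K$ with $r\ge1$; taking a pencil inside it and stripping base points exhibits a $K$-rational $g^1_{d'}$ with $d'\le d$, so $\delta_K(X)\le d\le 2d$ and a fortiori $\delta_{\tilde K}(X)\le 2d$. Otherwise, by pigeonhole $u(X^{(d)}(K))$ is infinite, so $W_d(K)$ is infinite, and hence so is $V(K)$ for the closed subvariety $V:=W_d-[D_\ast]$ of the Jacobian $J$ of $X$.

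The second step is the input from diophantine geometry. Since $J$ has no isotrivial quotient, its $K/k$-trace ($k$ the field of constants of $K$) vanishes, so the Lang--Néron theorem shows $J(K)$ is finitely generated, and likewise $J(\tilde K)$ for any finite extension $\tilde K/K$. Now apply the positive-characteristic form of the Mordell--Lang conjecture (Hrushovski, after Abramovich--Voloch and others; the trace-zero case is the clean one) to $V\subseteq J$: after replacing $K$ by a suitable finite purely inseparable extension $\tilde K$ --- this is precisely where $\tilde K$ enters --- the Zariski closure of $V(\tilde K)$ inside $V_{\tilde K}$ is a finite union of translates of abelian subvarieties of $J_{\tilde K}$ by $\tilde K$-rational points. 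Since $V(K)$ is already infinite, at least one of these translates is positive-dimensional; translating back by $[D_\ast]$ we obtain a $\tilde K$-rational divisor class $c$ of degree $d$ and an abelian subvariety $B\subseteq J_{\tilde K}$ with $\dim B\ge1$ and $c+B\subseteq W_{d,\tilde{K}}$.

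Finally I would extract the pencil. As $B=-B$, for every $b\in B$ both $c+b$ and $c-b$ lie in $W_{d,\tilde{K}}$, so $|c+b|$ and $|c-b|$ are nonempty; choosing effective divisors $D_b^+\in|c+b|$ and $D_b^-\in|c-b|$ produces an effective divisor $D_b^+ + D_b^-$ of degree $2d$ lying in the fixed class $2c$. Were $\dim|2c|=0$, every $D_b^+$ would be a subdivisor of the unique member of $|2c|$, of which there are only finitely many, contradicting the fact that $b\mapsto[D_b^+]=c+b$ ranges over the infinite set $c+B$. Hence $\dim|2c|\ge1$, i.e.\ $X_{\tilde K}$ carries a $g^1_{2d}$; using that the class $2c$ is $\tilde K$-rational and that $X$ already carries the $\tilde K$-rational divisor $2D_\ast$, standard descent produces a base-point-free pencil of degree $\le 2d$ over $\tilde K$ and a corresponding morphism $X\to\P^1_{\tilde K}$, so $\delta_{\tilde K}(X)\le 2d$.

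The main obstacle is citing and using honestly the correct characteristic-$p$ Mordell--Lang statement: over a function field of characteristic $p$ the ``special'' translates in the conclusion genuinely need not be defined over $K$, and passing to a finite purely inseparable extension cannot be avoided --- this is exactly why the theorem is phrased with $\tilde K$ rather than $K$. A lesser point is the descent of the resulting pencil from the algebraic closure down to $\tilde K$ (in principle landing on a conic rather than on $\P^1_{\tilde K}$), which is handled using the low-degree $\tilde K$-rational divisors, of degrees $d$ and $2d$, already present on $X$.
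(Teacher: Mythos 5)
Your argument is correct and is essentially the paper's own proof: the paper simply cites Frey's Proposition~2 as adapted by Schweizer (with Hrushovski's function-field Mordell--Lang replacing Faltings) and Clark's observation removing the hypothesis $X(K)\neq\emptyset$, and what you have written out is a faithful expansion of exactly that chain of arguments, including the role of the purely inseparable extension $\tilde{K}$. The only point to watch is that the positive-dimensional translate should be chosen to pass through the class of an honest effective $K$-rational divisor of degree $d$ (infinitely many such classes lie on one of the finitely many translates), which makes $|2c|$ a genuine projective space over $\tilde{K}$ and renders the final descent to $\P^1_{\tilde{K}}$ immediate.
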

\begin{proof}
This result is essentially due to Frey \cite[Prop. 2]{Frey}. The
statement of the theorem is proven in \cite[Thm. 2.1]{SchweizerMZ}
under the additional assumption that $X(K)\neq \emptyset$; the main
difference between \cite{SchweizerMZ} and the argument in
\cite{Frey} is that one needs to replace Faltings proof of
Mordell-Lang conjecture over number fields by its analogue over
function fields due to Hrushovski. Finally, as was observed by Clark
\cite[Thm. 5]{Clark}, the assumption $X(K)\neq \emptyset$ is not
necessary for Frey's argument to work.
\end{proof}

\begin{notn} Let $m(X)$ to be the minimum degree of a finite field extension
$L/K$ such that $X(L)\neq \emptyset$. If $K$ is a global field, let
$m_v(X):=m(X_{/K_v})$ and
$$
m_\loc(X)=\underset{v\in |K|}{\mathrm{lcm}} m_v(X).
$$
Note that due to the Weil bound and Hensel's lemma, $m_v(X)=1$ for
all but finitely many $v$, so $m_\loc$ is well-defined.
\end{notn}

\begin{thm}\label{HPthmClark} Let $K$ be a finite field extension of $F$. Suppose $X(K)=\emptyset$, and
moreover, suppose that for any finite, purely inseparable extension
$\tilde{K}$ of $K$ we have
$$
\delta_{\tilde{K}}(X)>2m>2
$$
for some multiple $m$ of $m_\loc(X)$. Then there exist infinitely
many extensions $L/K$ with $[L:K]=m$ such that $X_{L}$ violates the
Hasse principle.
\end{thm}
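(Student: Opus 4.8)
The plan is to produce the fields $L$ by prescribing their completions at a suitable finite set of places of $K$ so as to force $X_L$ to be everywhere locally soluble, while simultaneously forcing the Galois closure of $L/K$ to be the full symmetric group $S_m$; the latter condition will rule out $L$-rational points on $X$ for all but finitely many of the constructed $L$.

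Set $S:=\{v\in|K|\ :\ m_v(X)>1\}$, a finite set by the Weil bound and Hensel's lemma. For each $v\in S$ choose a finite \emph{separable} extension $E_v/K_v$ with $X(E_v)\neq\emptyset$ and $[E_v:K_v]$ dividing $m$; this is possible since the local index $m_v(X)$ divides $m_{\loc}(X)$, hence $m$, and for a smooth proper curve over a local field the index is realized over a separable extension. Let $B_v$ be the unramified extension of $E_v$ of degree $m/[E_v:K_v]$: it is a separable degree-$m$ field extension of $K_v$ containing $E_v$, cut out by a monic separable polynomial $g_v\in K_v[x]$ of degree $m$. Using weak approximation in $K$ together with Krasner's lemma, pick a monic $g\in K[x]$ of degree $m$ that is $v$-adically so close to $g_v$, for each $v\in S$, that $K_v[x]/(g)\cong B_v$; since $K$ is Hilbertian and Hilbert sets meet such finitely many open local conditions, we may further require $g$ irreducible with splitting field having Galois group $S_m$, and, varying also the decomposition at auxiliary places, we obtain infinitely many pairwise non-isomorphic such fields $L:=K[x]/(g)$. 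Each has $[L:K]=m$, and $X_L$ is everywhere locally soluble: for a place $w$ of $L$ over $v\in S$ we have $L_w\cong B_v$, which contains $E_v$, so $X(L_w)\supseteq X(E_v)\neq\emptyset$; for $w$ over $v\notin S$ one has $m_v(X)=1$, so $X(K_v)\neq\emptyset$ and a fortiori $X(L_w)\neq\emptyset$.

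It remains to show $X(L)=\emptyset$ for all but finitely many of these $L$. Let $P\in X(L)$ and let $x$ be the closed point of $X$ below $P$, so $K\subseteq\kappa(x)\subseteq L$. As $S_m$ acts primitively on $m$ letters, $L/K$ has no intermediate field other than $K$ and $L$; hence $\kappa(x)=K$ or $\kappa(x)=L$. The first case is excluded by $X(K)=\emptyset$, so $\kappa(x)=L$ and $x$ is a closed point of $X$ of degree $m$ over $K$. If infinitely many of our pairwise non-isomorphic $L$ carried a rational point, $X$ would have infinitely many closed points of degree $m$ over $K$; then Theorem \ref{HPthm2}, applied with $d=m$, would provide a finite purely inseparable extension $\tilde K/K$ with $\delta_{\tilde K}(X)\le 2m$, contradicting the hypothesis $\delta_{\tilde K}(X)>2m$. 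Hence only finitely many of the $L$ carry a rational point of $X$; discarding these leaves infinitely many degree-$m$ extensions $L/K$ over which $X_L$ is everywhere locally soluble and has no rational point, i.e.\ violates the Hasse principle.

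The arithmetic heart of the argument — converting an infinitude of degree-$m$ points into a gonality bound via Theorem \ref{HPthm2}, which the hypothesis forbids — is short; note that it is precisely because Theorem \ref{HPthm2} returns an uncontrolled purely inseparable $\tilde K$ that the hypothesis must be imposed for \emph{all} such $\tilde K$. I expect the main work to lie in the construction of the second paragraph: verifying that the open local conditions at $S$ are compatible with landing in a Hilbert set producing a full symmetric Galois group, and, in positive characteristic, that the local indices $m_v(X)$ really are realized over separable extensions, so that a single separable $L$ can meet all the constraints simultaneously.
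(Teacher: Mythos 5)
Your proof is correct and follows essentially the same route as the paper: the paper's own proof consists of deducing finiteness of the set of degree-$m$ points from Theorem \ref{HPthm2} exactly as you do, and then citing \cite[Thm.\ 6]{Clark} for the weak-approximation/no-intermediate-field construction of the fields $L$, which you have simply written out in full (including the separability caveats specific to positive characteristic that the citation glosses over).
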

\begin{proof} Let $S_m$ be the set of points on $X$ of degree $m$
over $K$. Due to our assumption on the gonality of $X$, Theorem
\ref{HPthm2} implies that $S_m$ is finite. The rest of the proof is
the same as in \cite[Thm. 6]{Clark}.
\end{proof}

\begin{lem}\label{HPlem4}
Assume $K$ is a discrete valuation field with perfect residue field
$k$. If $X$ has good reduction $X_k$ over $k$, then $\delta_K(X)\geq
\delta_k(X_k)$.
\end{lem}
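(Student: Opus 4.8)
The plan is to compare morphisms to $\P^1$ on the generic and special fibres via a model over the valuation ring. Let $\cO$ be the valuation ring of $K$ with residue field $k$, and let $\cX\to \Spec(\cO)$ be a model of $X$ with good reduction, so that $\cX$ is smooth and proper over $\cO$ and the special fibre $\cX_k$ is exactly $X_k$. Suppose $\delta_K(X)=n$, realized by a degree-$n$ morphism $\pi:X\to \P^1_K$. First I would reinterpret $\pi$ as a line bundle: let $\cL=\pi^\ast\cO_{\P^1}(1)$, a line bundle of degree $n$ on $X$ together with a base-point-free pencil, i.e.\ a rank-$2$ subspace of sections $V\subset \rH^0(X,\cL)$ defining a morphism to $\P^1=\P(V^\vee)$. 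The goal is to spread $\cL$ and $V$ out over $\cO$ and then restrict to $\cX_k$.

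The key steps are as follows. Step one: extend $\cL$ to a line bundle $\widetilde{\cL}$ on $\cX$ — since $\cX$ is regular (being smooth over a DVR) and $X\hookrightarrow\cX$ is an open dense subscheme whose complement has codimension $1$, any line bundle on $X$ extends to a line bundle on $\cX$, and we may normalize the extension so that it restricts to $\cO_{\P^1}(1)$-type degree $n$ on each fibre. Step two: extend the pencil. After possibly twisting $\widetilde{\cL}$ by a power of the uniformizer (which changes nothing on the generic fibre), the two sections $s_0,s_1$ spanning $V$ extend to sections of $\widetilde{\cL}$ over $\cX$ whose restrictions to $\cX_k$ are not both zero; here one uses that $\pi_\ast\widetilde{\cL}$ is a coherent sheaf on $\Spec(\cO)$, hence (after the twist) free, so sections on the generic fibre extend and can be chosen to reduce to a nonzero pair. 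Step three: check base-point-freeness on the special fibre. The locus in $\cX$ where both extended sections vanish is closed; it misses the generic fibre $X$, so it is contained in $\cX_k$; but if it were nonempty it would be a divisor (the special fibre is a Cartier divisor), contradicting that the two sections do not simultaneously vanish on all of $\cX_k$ — more carefully, one argues that simultaneous vanishing on a \emph{component} of $\cX_k$ would force a common factor, which can be cancelled into the twist. Hence the extended pencil restricts on $\cX_k=X_k$ to a base-point-free pencil, giving a morphism $X_k\to\P^1_k$. Step four: bound its degree. The degree of this morphism is the intersection number of $\widetilde{\cL}$ with a fibre of $\cX\to\Spec(\cO)$, which is locally constant in flat families, hence equal to the degree $n$ on the generic fibre. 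Therefore $\delta_k(X_k)\le n=\delta_K(X)$, which is the assertion.

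The main obstacle is Step three together with the twisting bookkeeping in Step two: one must ensure that the extended pencil does not acquire base points on the special fibre, and the only way this can fail is if the uniformizer divides one or both sections along $\cX_k$, which is precisely remedied by choosing the minimal twist of $\widetilde{\cL}$ so that $\pi_\ast\widetilde{\cL}$ is free and generated by sections not all divisible by the uniformizer. Phrased cohomologically: $\rH^0(\cX,\widetilde{\cL})$ is a free $\cO$-module of rank $\ge 2$, and by smoothness/properness base change $\rH^0(\cX,\widetilde{\cL})\otimes_\cO k\hookrightarrow \rH^0(X_k,\widetilde{\cL}|_{X_k})$, so a rank-$2$ free submodule lifting $V$ reduces to a rank-$2$ subspace of $\rH^0(X_k,\widetilde{\cL}|_{X_k})$; base-point-freeness of the reduced pencil then follows because the base locus is proper over $k$, is disjoint from the generic fibre, and cannot be a divisor after the normalizing twist. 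Everything else — extending line bundles across a divisor on a regular scheme, constancy of fibre degree in a flat family, and perfectness of $k$ to guarantee $X_k$ is a nice curve — is standard and can be cited or checked directly.
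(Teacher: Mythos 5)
The paper offers no argument of its own here---it simply cites Lemma 5.1 and Remark 5.2 of Nguyen--Saito---and your strategy (extend the $g^1_n$ defining the gonality map to the smooth proper model $\cX/\cO$, reduce it to the special fibre, and read off a pencil of degree at most $n$ on $X_k$) is exactly the standard specialization-of-linear-systems argument that lies behind that reference. So the approach is right. But there is one genuine gap in your execution: normalizing the two sections so that they are not \emph{both} divisible by the uniformizer $\varpi$ only guarantees that the reductions $\bar{s}_0,\bar{s}_1$ are not both zero; it does not guarantee they are linearly \emph{independent} in $\rH^0(X_k,\widetilde{\cL}|_{X_k})$. If $\bar{s}_1=c\,\bar{s}_0\neq 0$, the reduced ``pencil'' is one-dimensional, the induced map is constant, and no bound on $\delta_k(X_k)$ comes out. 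The fix is to lift $V$ not by an arbitrary rank-$2$ free submodule but by its saturation $M=V\cap\rH^0(\cX,\widetilde{\cL})$, which is a rank-$2$ direct summand of the free $\cO$-module $\rH^0(\cX,\widetilde{\cL})$; only then does $M\otimes_\cO k$ inject into $\rH^0(\cX,\widetilde{\cL})/\varpi\rH^0(\cX,\widetilde{\cL})$, which in turn injects into $\rH^0(X_k,\widetilde{\cL}|_{X_k})$ by the sequence $0\to\widetilde{\cL}\xrightarrow{\varpi}\widetilde{\cL}\to\widetilde{\cL}|_{X_k}\to 0$ (no base-change theorem needed). Your closing sentence asserts this for ``a rank-$2$ free submodule lifting $V$,'' which is false for non-saturated submodules such as $\cO s_0+\cO\varpi s_1$.

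Separately, your Step three is incorrect as reasoning, though harmlessly so. The common zero locus of $\bar{s}_0,\bar{s}_1$ need not be empty: once the sections do not both vanish identically on the (irreducible) curve $X_k$, their common zero locus is a finite set of points, and nothing forces it to be ``a divisor'' or to produce a contradiction. The correct conclusion is that the reduced pencil has a base divisor $B\geq 0$; removing it yields a morphism $X_k\to\P^1_k$ of degree $\deg(\widetilde{\cL}|_{X_k})-\deg B=n-\deg B\leq n$, which is all the lemma requires. Accordingly, Step four should claim degree \emph{at most} $n$, not exactly $n$. You should also record that $X_k$ is geometrically irreducible (it is connected by Zariski connectedness since $\rH^0(\cX,\cO_\cX)=\cO$, and smooth), since that is what makes two independent reduced sections have a nonconstant ratio and hence define a nonconstant map.
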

\begin{proof}
See Lemma 5.1 and Remark 5.2 in \cite{NS}.
\end{proof}

\begin{lem}\label{HPlem5}
Let $S\subset |C|$ be a finite subset of places. Denote
$s:=\sum_{x\in S}\deg(x)$. There exists a place $o\in |C|-S$ such
that $\deg(o)\leq \log_q(s)+1$.
\end{lem}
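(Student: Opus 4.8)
The plan is to count, for a given bound $N$, how many closed points of $C=\P^1_{\F_q}$ have degree at most $N$, and show that this count already exceeds $\#S$ once $N$ is large enough, so that a point $o\notin S$ of degree $\le N$ must exist. First I would recall the standard fact that the number of monic irreducible polynomials of degree $n$ over $\F_q$ is $\tfrac1n\sum_{d\mid n}\mu(n/d)q^d$, and hence the number $a_n$ of closed points of $C$ of degree exactly $n$ satisfies $n\,a_n\le q^n$ for $n\ge 1$ (indeed $\sum_{d\mid n} d\,a_d=q^n$, counting $\F_{q^n}$-points of $\P^1$ minus the point at infinity, or more simply counting elements of $\F_{q^n}$ by the degree of their minimal polynomial; one can absorb the place at infinity into a harmless $+1$). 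The cruder bound $a_n\le q^n$ for all $n\ge 1$ is all that is needed.

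Next I would estimate the total number of closed points of degree $\le N$ from below. It is at least the number of degree-$1$ points alone, which is $q+1>q$; more usefully, comparing with $s=\sum_{x\in S}\deg(x)$, note that $S$ contains at most $s/n$ points of degree exactly $n$ (since each contributes $n$ to the sum $s$), and in fact $\#S\le s$ trivially. So it suffices to exhibit some $N$ with $\#\{o\in|C|:\deg(o)\le N\}>s$. Using $a_n\le q^n$ is the wrong direction; instead I use the lower bound: the number of closed points of degree dividing $N$ is exactly $q^N+1$ (the $\F_{q^N}$-rational points of $\P^1_{\F_q}$), and these all have degree $\le N$. Hence if $q^N+1>s$, equivalently $q^N\ge s$, there are more than $s\ge\#S$ closed points of degree $\le N$, so at least one lies outside $S$. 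The inequality $q^N\ge s$ holds as soon as $N\ge\log_q(s)$, and taking $N=\lfloor\log_q(s)+1\rfloor$ (or simply noting that $o$ of degree $\le\log_q(s)+1$ suffices) gives the claim.

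I would then just set $o$ to be any such closed point not in $S$, which exists by the counting above, and observe $\deg(o)\le\log_q(s)+1$ as required. The only mild subtlety — and the one place to be careful — is the bookkeeping with the place at infinity and whether to count $\P^1(\F_{q^N})=q^N+1$ or $\A^1(\F_{q^N})=q^N$; either way the bound $q^N\ge s\ \Rightarrow\ \exists\,o\notin S,\ \deg(o)\le N$ goes through, since $\#S\le s\le q^N<q^N+1$. There is no serious obstacle here; the statement is elementary and the argument is a one-line pigeonhole on top of the exact count of $\F_{q^N}$-points of $\P^1_{\F_q}$.
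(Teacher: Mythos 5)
Your overall strategy --- a pigeonhole against the count of $\F_{q^N}$-points of $\P^1_{\F_q}$ --- is the same as the paper's, but there is a genuine error in the key step. The assertion ``the number of closed points of degree dividing $N$ is exactly $q^N+1$'' is false: the $q^N+1$ elements of $\P^1_{\F_q}(\F_{q^N})$ are \emph{geometric} points, and a closed point of degree $d\mid N$ accounts for $d$ of them, so the correct identity is $\sum_{d\mid N}d\,a_d=q^N+1$ (the very identity you record in your first paragraph), while the number of closed points of degree dividing, or at most, $N$ is $\sum_{d}a_d$, which is only of order $q^N/N$. The cardinality pigeonhole therefore does not close: for $q=2$ and $N=3$ there are only $3+1+2=6$ closed points of degree $\le 3$, so for $s=7$ (which satisfies $q^N\ge s$) your intermediate claim ``there are more than $s$ closed points of degree $\le N$'' is simply wrong, even though the conclusion of the lemma still holds there. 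Run honestly on cardinalities, the argument would only yield a bound of the shape $\log_q(s)+\log_q\log_q(s)+O(1)$, not $\log_q(s)+1$.

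The repair is to pigeonhole on sums of degrees rather than on numbers of points, which is exactly what the paper does: if every place of degree $\le N$ lay in $S$, then $s=\sum_{x\in S}\deg(x)\ge\sum_{d\mid N}d\,a_d=\#\P^1_{\F_q}(\F_{q^N})=q^N+1>q^N$. Hence $q^N\ge s$ already forces the existence of some $o\notin S$ with $\deg(o)\le N$, and taking $N=\lceil\log_q s\rceil\le\log_q(s)+1$ gives the lemma. You have all the ingredients in your first paragraph; the slip is dropping the weight $d$ when passing from the identity $\sum_{d\mid N}d\,a_d=q^N+1$ to a count of closed points.
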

\begin{proof} Fix a natural number $n\geq 1$. Let $S_n=\{x\in |C|\ |\ \deg(x)\leq
n\}$. Then
$$
\sum_{x\in S_n}\deg(x)\geq \sum_{\substack{x\in |C|\\ \deg(x)|
n}}\deg(x)=\# C(\F_{q^n})=q^n+1.
$$

Choose $o\in |C|$ of least possible degree subject to $o\not\in S$.
A moment of thought shows that it is enough to prove the statement
of the lemma for $S=S_n$. In this case, the above inequality gives
$s> q^{n}$. Since we can choose $o$ of degree $n+1$, the claim
follows.
\end{proof}

\begin{prop}\label{HPprop6} The Jacobian of $X^\cD_F$ has no isotrivial quotients.
For any finite, purely inseparable extension $\tilde{F}$ of $F$ we
have
$$
\delta_{\tilde{F}}(X^\cD)\geq \frac{\prod_{x\in
R}(q_x-1)}{(q^2-1)(q\deg(\fr)+3)}>\frac{q^{\frac{\deg(\fr)}{2}-3}}{\deg(\fr)+3}.
$$
\end{prop}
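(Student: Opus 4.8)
The plan is to prove the two assertions separately, the first quickly from the known reduction behaviour of $X^\cD$, the second by reducing the gonality question to the special fibre at a carefully chosen place of good reduction. For the first assertion I will use that $X^\cD\times_F\Fi$ is a Mumford curve, by the uniformization (\ref{eq-unifinf}), so $J:=\mathrm{Jac}(X^\cD)\times_F\Fi$ has split purely multiplicative reduction; in particular the abelian rank of the N\'eron model of $J$ over $\cO_\infty$ is zero. Invoking the additivity of the abelian and toric ranks in short exact sequences of semistable abelian varieties over a local field, it follows that every quotient $B$ of $J$ has abelian rank zero, hence purely multiplicative reduction, and therefore does not acquire good reduction over any finite extension of $\Fi$. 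Since a constant abelian variety has good reduction everywhere, an isotrivial abelian variety is potentially good at $\infty$; thus $\mathrm{Jac}(X^\cD)$ admits no nonzero isotrivial quotient. (If $g(R)=0$ this is vacuous, and one could instead argue at a place of $R$ via Hausberger's uniformization (\ref{eq-3}).)

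For the gonality bound, fix a finite purely inseparable extension $\tilde F/F$ and let $o\in|C|-R-\infty$ be a place of \emph{minimal} degree. Since $\tilde F/F$ is purely inseparable and finite fields are perfect, the completion of $\tilde F$ at the place above $o$ is a complete discretely valued field with residue field $\F_o$; by Theorem \ref{thmMC}, $X^\cD$ has good reduction there, with special fibre the smooth projective curve $X^\cD_{\F_o}$ over $\F_o$. Lemma \ref{HPlem4} together with base change then gives $\delta_{\tilde F}(X^\cD)\ge\delta_{\F_o}(X^\cD_{\F_o})$. Any degree-$n$ morphism $X^\cD_{\F_o}\to\P^1_{\F_o}$ base-changes to $\F_{q_o^2}$, over which each of the $q_o^2+1$ rational points of $\P^1$ has at most $n$ preimages; hence $\#X^\cD(\F_{q_o^2})\le n(q_o^2+1)$, and so $\delta_{\F_o}(X^\cD_{\F_o})\ge\#X^\cD(\F_{q_o^2})/(q_o^2+1)$. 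Everything now hinges on a lower bound for $\#X^\cD(\F_{q_o^2})$.

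To obtain it I will count the supersingular points. Let $\bD$ be the (division) quaternion algebra over $F$ ramified exactly at $R\cup\{o,\infty\}$. By the Honda--Tate classification of \cite[Ch.~9--10]{LRS} recalled in the proof of Theorem \ref{thm2.1}, the isogeny class of $(D,\infty,o)$-type $(F,\Pi)$ is in bijection with the ideal class set of a maximal $A$-order $\fD$ in $\bD$; and since the central element $\wp_o$ acts on the adelic model of that class by translation by $2$ on the relevant $\Z$-factor and trivially on the remaining factors, the square of $\Frob_o$ acts as the identity on this class, so all of its points lie in $X^\cD(\F_{q_o^2})$. Because $\bD$ is ramified at $\infty$, every maximal $A$-order in $\bD$ has unit group $\F_q^\times$ or $\F_{q^2}^\times$, so the class number $h$ of $\bD$ and its mass $\mu=\sum_i|\cO_i^\times|^{-1}$ satisfy $h\ge(q-1)\mu$; combining this with the function-field Eichler mass formula $\mu=\frac{1}{(q-1)(q^2-1)}\prod_{x\in R\cup\{o\}}(q_x-1)$ (cf. \cite[Thm.~9]{DvG} and the proof of Lemma \ref{lem3.4}) yields
$$
\#X^\cD(\F_{q_o^2})\ \ge\ h\ \ge\ \frac{1}{q^2-1}\prod_{x\in R\cup\{o\}}(q_x-1)\ =\ \frac{q_o-1}{q^2-1}\prod_{x\in R}(q_x-1).
$$

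Finally I will close the numerics. The minimality of $\deg(o)$ forces $q_o\le q\deg(\fr)$: if $\deg(o)=1$ this holds because $\deg(\fr)\ge2$, while if $\deg(o)=d+1\ge2$ then every place of degree $\le d$ lies in $R\cup\{\infty\}$, so $q^d+1=\#C(\F_{q^d})\le\deg(\fr)+1$ and $q_o=q^{d+1}\le q\deg(\fr)$. Therefore $(q_o-1)(q\deg(\fr)+3)\ge(q_o-1)(q_o+3)=q_o^2+2q_o-3\ge q_o^2+1$, i.e. $(q_o-1)/(q_o^2+1)\ge1/(q\deg(\fr)+3)$, and chaining the inequalities above produces the first bound of the proposition. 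The second, weaker inequality reduces to the elementary estimate $\prod_{x\in R}(q_x-1)\ge q^{\deg(\fr)/2}$ --- immediate for $q\ge3$, and for $q=2$ using that $R$ has at most two places of degree $1$ --- together with $(q^2-1)(q\deg(\fr)+3)<q^3(\deg(\fr)+3)$. The one step I expect to demand real care is the lower bound on $\#X^\cD(\F_{q_o^2})$: fixing the normalization of the mass formula over $\F_q(T)$ and verifying that the supersingular class is $\F_{q_o^2}$-rational of cardinality exactly $h$ is the crux, while everything else is either formal or a routine computation.
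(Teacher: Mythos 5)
Your argument is correct and follows the paper's overall strategy: same treatment of isotriviality (total degeneracy at $\infty$, resp.\ at $R$, forces purely toric reduction of the N\'eron model, which no nonzero quotient of the Jacobian can escape, whereas an isotrivial quotient would be potentially good), and the same reduction of the gonality bound to the special fibre at a place $o\notin R\cup\{\infty\}$ of least degree via Lemma \ref{HPlem4}, followed by the counting inequality $\delta(q_o^2+1)\geq \#X^\cD_o(\F_o^{(2)})$ and the estimate $q_o\leq q\deg(\fr)$. The one genuine difference is the source of the lower bound on $\#X^\cD_o(\F_o^{(2)})$: the paper simply cites \cite[Cor.~4.8]{PapGenus}, while you re-derive it by identifying the supersingular isogeny class $M(k)_{(F,\Pi)}$ with the left ideal class set of a maximal $A$-order in the quaternion algebra ramified at $R\cup\{o,\infty\}$, observing that $\Frob_o^2$ acts trivially on it (translation by $2$ on the $\Z$-factor is the action of the central element $\wp_o$), and bounding the class number below by $(q-1)$ times the mass. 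This checks out: your normalization of the mass formula reproduces exactly the main term $\frac{1}{q^2-1}\prod_{x\in R\cup o}(q_x-1)$ of the cited bound (the cited bound also carries the positive elliptic term $\frac{q}{q+1}2^{\#R}\Odd(R\cup o)$, which is discarded anyway), so you lose nothing and gain a self-contained proof at the cost of re-doing the class-number computation. One small caveat you share with the paper: the final ``crude'' estimate $\prod_{x\in R}(q_x-1)\geq q^{\deg(\fr)/2}$ fails in the single case $q=2$, $R$ equal to the two finite degree-one places (product $1$ versus $2$), so the second, purely cosmetic inequality of the proposition is not literally valid there; your patch via the number of degree-one places does not cover this case either, but it is irrelevant to the first (main) bound and to the application in Theorem \ref{thmHP}, where $\deg(\fr)\geq 20$.
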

\begin{proof}
The Jacobian $J^\cD$ of $X^\cD_F$ has no isotrivial quotients since
$X^\cD$ is totally degenerate at the places $R\cup \infty$, and
hence the connected component of the identity of the N\'eron model
of $J^\cD$ has purely toric reduction at those places.

Let $\tilde{F}$ be a finite, purely inseparable extension
$\tilde{F}$ of $F$. Fix some $o\in |C|-R-\infty$, and let
$\tilde{o}$ be the (unique) place of $\tilde{F}$ over $o$. As
$\tilde{F}/F$ is totally ramified, the residue field of the place
$\tilde{o}$ is $\F_o$. Denote $X^\cD_o:=X^\cD\times_C \Spec(\F_o)$.
By Lemma \ref{HPlem4}, $\delta:=\delta_{\F_o}(X^\cD_o)\leq
\delta_{\tilde{F}}(X^\cD)$, so it is enough to give a lower bound on
$\delta$. On the one hand, we must have
$$
\#X^\cD_o(\F_o^{(2)})\leq \delta\#\P^1_{\F_o}(\F_o^{(2)})
=\delta(q_o^2+1).
$$
On the other hand, by \cite[Cor. 4.8]{PapGenus}
$$
\#X^\cD_o(\F_o^{(2)})\geq \frac{1}{q^2-1}\prod_{x\in R\cup
o}(q_x-1)+\frac{q}{q+1}\cdot 2^{\# R}\cdot \Odd(R\cup o).
$$
We conclude that
$$
\frac{1}{q^2-1}\prod_{x\in R}(q_x-1)\leq
\delta\frac{q_o^2+1}{q_o-1}\leq \delta (q_o+3).
$$
By Lemma \ref{HPlem5}, we can choose $o$ such that $q_o\leq
q\deg(\fr)$. With this choice, we get the desired lower bound on
$\delta$. The second inequality follows from the crude estimates
$$
\prod_{x\in R}(q_x-1)\geq q^{\deg(\fr)/2}\quad \text{ and }\quad
(q^2-1)(q\deg(\fr)+3)< q^3(\deg(\fr)+3).
$$
\end{proof}

\begin{thm}\label{thmHP}
Suppose $\deg(\fr)\geq 20$. Then there exist infinitely many
quadratic extensions $L/F$ such that $X^\cD$ violates the Hasse
principle over $L$.
\end{thm}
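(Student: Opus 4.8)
The plan is to deduce the theorem from Theorem \ref{HPthmClark}, applied to $X=X^\cD_F$ over the base field $K=F$ with $m=2$. To invoke that theorem one must check: $X^\cD_F$ is a smooth, projective, geometrically irreducible curve over $F$ with $X^\cD(F)=\emptyset$; the Jacobian of $X^\cD_F$ has no isotrivial quotients (this is what makes the proof of Theorem \ref{HPthmClark}, via Theorem \ref{HPthm2}, go through); $m_\loc(X^\cD)$ divides $2$, so that $m=2$ is a genuine multiple of $m_\loc(X^\cD)$; and $\delta_{\tilde F}(X^\cD)>2m=4$ for every finite, purely inseparable extension $\tilde F/F$. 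Granting all of these, Theorem \ref{HPthmClark} produces infinitely many degree-$2$ extensions $L/F$ over which $X^\cD$ violates the Hasse principle, which is precisely the assertion.

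Most of these inputs are already on hand. Smoothness and projectivity of $X^\cD_F$ are part of Theorem \ref{thmMC}; geometric connectedness follows from the uniformization \eqref{eq-unifinf} (as in the proof of Theorem \ref{thmEquations}), and together with smoothness this gives geometric irreducibility. Since $\deg(\fr)\geq 20$ we have $R\neq\emptyset$, so the Remark following Theorem \ref{thm3.1} gives $X^\cD(F_v)=\emptyset$ for any $v\in R$, whence $X^\cD(F)=\emptyset$. The absence of isotrivial quotients in the Jacobian and the gonality bound are both furnished by Proposition \ref{HPprop6}: there
$$
\delta_{\tilde F}(X^\cD)>\frac{q^{\deg(\fr)/2-3}}{\deg(\fr)+3},
$$
and over the range $q\geq 2$, $\deg(\fr)\geq 20$ the right-hand side is smallest at $q=2$, $\deg(\fr)=20$, where it equals $2^{7}/23=128/23>4$; hence $\delta_{\tilde F}(X^\cD)>4$ for all finite purely inseparable $\tilde F/F$.

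It remains to bound $m_\loc(X^\cD)$, and this is where the local analysis of Sections 3--5 is used. For $v\in|C|-R-\infty$, the unramified quadratic extension $F_v^{(2)}/F_v$ has even residue degree, so Theorem \ref{thm2.1} gives $X^\cD(F_v^{(2)})\neq\emptyset$ and hence $m_v(X^\cD)\leq 2$. For $v\in R$, the same unramified quadratic extension again has even residue degree, so Part (1) of Theorem \ref{thm3.1} gives $m_v(X^\cD)\leq 2$. For $v=\infty$, Theorem \ref{thmLPinf} says $X^\cD$ acquires a rational point over \emph{every} nontrivial finite extension of $F_\infty$, so in particular $m_\infty(X^\cD)\leq 2$. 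Thus $m_v(X^\cD)\in\{1,2\}$ for every place $v$, so $m_\loc(X^\cD)$ divides $2$ and $m=2$ is admissible in Theorem \ref{HPthmClark}.

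The argument is essentially an assembly of earlier results, so there is no single hard step; the point needing the most care is the verification that $m_\loc(X^\cD)\mid 2$, since it is exactly here that the case analysis of local points over good versus bad and ramified versus unramified places must be combined to ensure that a quadratic extension suffices \emph{simultaneously} at every place. The numerical hypothesis $\deg(\fr)\geq 20$ enters only through Proposition \ref{HPprop6}, where it guarantees $\delta_{\tilde F}(X^\cD)>4$; any bound forcing the displayed quantity above $4$ would serve equally well, and $\deg(\fr)\geq 20$ is merely a clean choice.
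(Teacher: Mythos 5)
Your proposal is correct and follows exactly the paper's own route: combine the gonality bound of Proposition \ref{HPprop6} (which under $\deg(\fr)\geq 20$ gives $\delta_{\tilde F}(X^\cD)>4$) with $X^\cD(F)=\emptyset$ and $m_\loc(X^\cD)=2$, the latter two coming from Theorems \ref{thm2.1}, \ref{thm3.1} and \ref{thmLPinf}, and then apply Theorem \ref{HPthmClark} with $m=2$. The only difference is that you spell out the place-by-place verification of $m_v(X^\cD)\leq 2$ and the numerical check at $q=2$, $\deg(\fr)=20$, which the paper leaves implicit.
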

\begin{proof} If $\deg(\fr)\geq 20$, then Proposition
\ref{HPprop6} gives
$$
\delta_{\tilde{F}}(X^\cD)>\frac{q^{\frac{\deg(\fr)}{2}-3}}{\deg(\fr)+3}>4.
$$
As a consequence of Theorems \ref{thm2.1}, \ref{thm3.1} and
\ref{thmLPinf}, we have $m_\loc(X^\cD)=2$ and $X^\cD(F)=\emptyset$.
The claim now follows from Theorem \ref{HPthmClark}.
\end{proof}


\subsection*{Acknowledgments}  I thank E.-U. Gekeler and A. Schweizer  for
useful discussions. The article was mostly written while I was
visiting the Department of Mathematics of Saarland University. I
thank the members of the department for their warm hospitality.



\end{document}